\theoremstyle{plain}\newtheorem{Theorem}{Theorem}[section]
\theoremstyle{plain}
\theoremstyle{plain}
\theoremstyle{plain}\newtheorem{Lemma}[Theorem]{Lemma}
\theoremstyle{plain}
\theoremstyle{definition}\newtheorem{Definition}[Theorem]{Definition}
\theoremstyle{definition}
\theoremstyle{definition}
\theoremstyle{definition}
\theoremstyle{definition}
\theoremstyle{definition}\newtheorem{Notation}[Theorem]{Notation}
\theoremstyle{definition}
\theoremstyle{definition}
\theoremstyle{definition}
\theoremstyle{definition}
\theoremstyle{definition}
\theoremstyle{definition}
\theoremstyle{definition}\newtheorem{Notation/Definition}
[Theorem]{Notation/Definition}
\theoremstyle{definition}
\def\CF{{\mathcal{F}}}
\def\Aut{\mathrm{Aut}}
\def\ker{\mathrm{ker}}
\def\Ind{\mathrm{Ind}}           
\def\Inn{\mathrm{Inn}}
\def\Res{\mathrm{Res}}
\newcommand{\Sc}{{\text{Sc}}}
\newcommand{\Out}{{\text{Out}}}
\begin{document}

\title{The Brauer indecomposability of Scott modules with wreathed $2$-group vertices}
\date{\today}
\author{{Shigeo Koshitani and {\.I}pek Tuvay}}
\dedicatory{Dedicated to Professor Hiroyuki Tachikawa on his 90th birthday.}
\address{Center for Frontier Science,
Chiba University, 1-33 Yayoi-cho, Inage-ku, Chiba 263-8522, Japan.}
\email{koshitan@math.s.chiba-u.ac.jp}
\address{Mimar Sinan Fine Arts University, Department of Mathematics, 34380, Bomonti, \c{S}i\c{s}li, Istanbul, Turkey}
\email{ipek.tuvay@msgsu.edu.tr}

\thanks{The first author was partially supported by
the Japan Society for Promotion of Science (JSPS), Grant-in-Aid for Scientific Research
(C)19K03416, 2019--2021. The second author was partially supported by Mimar Sinan Fine Arts University
Scientific Research Unit with project number 2019-28}

\keywords{Brauer indecomposability, Scott module, 
wreathed $2$-group}
\subjclass[2010]{20C20, 20C05}

\begin{abstract}
{
 
We give a sufficient condition for the
$kG$-Scott module with vertex $P$ to remain
indecomposable under taking the Brauer construction for any subgroup $Q$ of $P$
as $k[Q\,C_G(Q)]$-module,
where $k$ is a field of characteristic $2$, 
and $P$ is a wreathed $2$-subgroup of a finite group $G$.
This generalizes results for the cases where $P$ is abelian and some others.
The motivation of this paper is that the Brauer indecomposability of
a $p$-permutation bimodule ($p$ is a prime) is one of the key steps in order
to obtain a splendid stable equivalence of Morita type 
by making use of the gluing method
that then can possibly lift to a splendid derived equivalence.
}
\end{abstract}

\maketitle

\section{Introduction and notation}

\noindent
In modular representation theory of finite groups,
the Brauer construction $M(P)$ of a $p$-permutation module $M$ with respect to 
a $p$-subgroup $P$ of a finite group $G$ plays a very important role,
where $k$ is an algebraically closed field of characteristic $p>0$.
It canonically becomes a $p$-permutation module over $kN_G(P)$ (see p.402 in \cite{Bro}).
In their paper \cite{KKM} Kessar, Kunugi and Mitsuhashi introduce a notion {\sl Brauer indecomposability}.
Namely, $M$ is called {\sl Brauer indecomposable} if the restriction
module ${\mathrm{Res}}\,^{N_G(Q)}_{Q\,C_G(Q)} (M(Q))$ is indecomposable or zero
as $k(Q\,C_G(Q))$-module for any subgroup $Q$ of $P$.
Actually in order to get a kind of equivalence between
the module categories for the principal blocks $A$ and $B$ 
of the group algebras $kG$ and $kH$ respectively
(where $H$ is another finite group), e.g. 
in order to prove Brou\'e's abelian defect group conjecture, 
we usually first of all  would
have to face a situation such that $A$ and $B$
are stably equivalent of Morita type.
In order to get the stable equivalence, 
we often want to check whether the $k(G\times H)$-Scott module
${\mathrm{Sc}}(G\times H, \Delta P)$ with a vertex  
$\Delta P:=\{(u,u)\in P\times P \}$ induces a stable
equivalence of Morita type between $A$ and $B$, where
$P$ is a common Sylow $p$-subgroup of $G$ and $H$
by making use of 
the gluing method due to Brou\'e (see \cite{Bro94}), and also
Rickard, Linckelmann and Rouquier.
If this is the case, then ${\mathrm{Sc}}(G\times H, \Delta P)$
have to be Brauer indecomposable.
Therefore it should be very important to know whether
${\mathrm{Sc}}(G\times H, \Delta P)$
is Brauer indecomposable or not.
This is the motivation why we have written this paper.
Actually, our main result is the following:
\begin{Theorem}\label{product}
Let $G$ and $G'$ be finite groups with a common Sylow $2$-subgroup 
$P\cong C_{2^n}\wr C_2$ for some $n\geq 2$, and assume that the two 
fusion systems of $G$ and $G'$ over $P$ are the same,
namely ${\mathcal F}_P(G)={\mathcal F}_P(G')$.
Then the Scott module
${\mathrm{Sc}}(G \times G', \Delta P)$ is Brauer indecomposable.
\end{Theorem}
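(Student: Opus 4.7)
The plan is to reduce the two-sided statement to a one-sided Brauer indecomposability question for $\mathrm{Sc}(G,P)$, and then verify the latter by a case analysis of the fully $\mathcal{F}$-normalized subgroups of the wreathed $2$-group $P$.

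First, since $\mathcal{F}_P(G)=\mathcal{F}_P(G')$, I would invoke the now-standard reduction (used in the abelian case by Kessar--Kunugi--Mitsuhashi and in subsequent work): for every fully $\mathcal{F}$-normalized subgroup $\Delta Q\le\Delta P$, the Brauer quotient of $\mathrm{Sc}(G\times G',\Delta P)$ at $\Delta Q$ factors through the one-sided Brauer quotients of $\mathrm{Sc}(G,P)$ and $\mathrm{Sc}(G',P)$ at $Q$, and a Mackey/fusion argument shows that Brauer indecomposability of the one-sided modules implies that of the two-sided one. By symmetry it suffices to prove that $\mathrm{Sc}(G,P)$ is Brauer indecomposable, and for this I would use the criterion of \cite{KKM}: it is enough to show, for every fully $\mathcal{F}_P(G)$-normalized $Q\le P$, that
\[
\mathrm{Res}^{N_G(Q)}_{Q\,C_G(Q)}\,\mathrm{Sc}\bigl(N_G(Q),\,N_P(Q)\bigr)
\]
is indecomposable. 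A convenient sufficient condition --- presumably proved in an earlier section of the paper --- is that $Q\,C_P(Q)\in\mathrm{Syl}_2(Q\,C_G(Q))$ (automatic from full centralization) together with indecomposability of the smaller Scott module $\mathrm{Sc}(Q\,C_G(Q),\,Q\,C_P(Q))$.

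Writing $P=A\rtimes\langle t\rangle$ with $A=\langle a,b\rangle$, $|a|=|b|=2^n$, $[a,b]=1$, $t^2=1$, $tat^{-1}=b$, I would then split the $\mathcal{F}$-conjugacy classes of subgroups of $P$ into two families: (i) $Q\le A$ and (ii) $Q\not\le A$. In case (i), $A\le C_P(Q)$, so the relevant small Scott module lives over an overgroup of the abelian group $A$, and the indecomposability follows from the previously established abelian case. In case (ii), $Q=\langle Q\cap A,\,tz\rangle$ for some $z\in A$; one computes $N_P(Q)$ and $C_P(Q)$ directly from the swap relation $t a^i b^j t=a^j b^i$, selects a fully $\mathcal{F}$-normalized representative within each class via the saturation axioms, and verifies the Sylow condition and the small-Scott-module indecomposability by direct inspection.

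The main obstacle will be case (ii) for those $Q$ with $C_P(Q)\subsetneq A$ and $N_P(Q)/Q$ nontrivial: here the abelian reduction is not available, and one must argue directly inside $N_G(Q)$, using the explicit structure of $P$ together with the hypothesis $\mathcal{F}_P(G)=\mathcal{F}_P(G')$ to control the $\mathcal{F}$-fusion orbit of $Q$ uniformly on both sides of the product. A further delicate point is the treatment of the $\mathcal{F}$-essential subgroups of $P$, which for $n\ge 2$ are more numerous than in the cyclic or Klein-four cases and whose classification for wreathed $2$-groups one must invoke to verify the Sylow condition across every $\mathcal{F}$-orbit. Once all of these cases are handled, the one-sided Brauer indecomposability is established, and the first-step reduction reassembles this into the claimed two-sided statement.
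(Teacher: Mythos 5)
The first step of your plan --- reducing the two-sided statement to one-sided Brauer indecomposability of $\mathrm{Sc}(G,P)$ --- is where the argument breaks down, and this is not a minor fixable detail but the essential difficulty of the theorem. There is no general ``Mackey/fusion argument'' by which Brauer indecomposability of $\mathrm{Sc}(G,P)$ and $\mathrm{Sc}(G',P)$ implies that of $\mathrm{Sc}(G\times G',\Delta P)$, and neither Kessar--Kunugi--Mitsuhashi nor the later papers you allude to prove the bimodule result by such a reduction; they all work directly with $\mathfrak{G}=G\times G'$ and $\Delta Q\le\Delta P$. Indeed, the one-sided result is not even established in the present paper without an additional hypothesis: Theorem~\ref{thm:M1} requires that $C_G(Q)$ be $2$-nilpotent for every fully normalized nontrivial $Q\le P$, a condition that is genuinely used and that the main theorem does \emph{not} assume. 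What makes the two-sided version provable without that hypothesis is precisely the product structure $C_{\mathfrak{G}}(\Delta Q)=C_G(Q)\times C_{G'}(Q)$, which is exploited in Lemma~\ref{homocyclic} via a Brauer-construction-at-$\Delta P_0$ argument that has no one-sided analogue. So your proposed reduction assumes a transfer principle that is both unproved and, in this setting, not available.

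You have also misplaced where the real obstacle lies. You flag case~(ii), $Q\not\le A=P_0$ with $C_P(Q)\subsetneq A$, as the delicate one; but in the paper these subgroups are dispatched quickly, because for them $C_G(Q)$ turns out to be $2$-nilpotent (Lemma~\ref{C_G(Q)2-nilp}, cases (1)--(3),(5)) and Theorem~\ref{IK2} applies via the construction in Theorem~\ref{HQ} and Lemma~\ref{2-nilpotent}. The genuinely hard case is the one you relegate to the ``abelian reduction'': $Q=\langle a^{2^m}\rangle\times\langle b^{2^m}\rangle\le P_0$ homocyclic, where $C_P(Q)=P_0$ but $C_G(Q)$ need not be $2$-nilpotent and $N_{\mathfrak{G}}(\Delta Q)/C_{\mathfrak{G}}(\Delta Q)$ may be $S_3$. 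The claim that ``the indecomposability follows from the previously established abelian case'' because $A\le C_P(Q)$ is not correct: knowing the abelian result for $\mathrm{Sc}(C_G(P_0)\times C_{G'}(P_0),\Delta P_0)$ does not by itself control $\mathrm{Sc}(N_{\mathfrak{G}}(\Delta Q),N_{\Delta P}(\Delta Q))$ restricted to $\Delta Q\,C_{\mathfrak{G}}(\Delta Q)$. The paper instead proves Lemma~\ref{homocyclic}, a bespoke two-sided argument that applies the Brauer construction at $\Delta P_0$ to a putative extra summand and derives a contradiction from the already-known indecomposability at $\Delta P_0$, using normality of $P_0\times P_0$ in $C_{\mathfrak{G}}(\Delta Q)$, the Mackey formula, and Green's indecomposability theorem. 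Your outline contains none of this machinery, so as written the proposal does not prove the theorem.
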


This theorem in a sense generalizes \cite{KKM, KKL, KL, KL2}, and 
there are results on Brauer indecomposability of Scott modules also
in \cite{ KT19a, KT19b,  T}.

\begin{Notation} Besides the notation explained above we need the following notation and terminology.
In this paper $G$ is always a finite group, and $k$ is an algebraically closed field of characteristic $p>0$.

By $H\leq G$ and $H \unlhd G$ we mean that $H$ is a subgroup and a normal subgroup of $G$, 
respectively.
We write $O_{p'}(G)$ for the largest normal ${p'}$-subgroup of $G$,
and $Z(G)$ for the center of $G$.
For $x, y\in G$ we set $x^y:=y^{-1}xy$ and $^y\!x:= yxy^{-1}$.
Further for $H\leq G$ and $g\in G$ 
we set $^g\!H:=\{\, ^g\!h \, |\, \forall h\in H \}$.
For two groups $K$ and $L$ we write $K\rtimes L$ for a semi-direct product of $K$ by $L$
where $K \unlhd (K\rtimes L)$.
For two subgroups $K, L$ of $G$ we denote by $G=K*L$ the central product
of $K$ and $L$ with respect to $G$. 
For a positive integer $m$, we mean by $C_m$ and $S_m$
the cyclic group of order $m$ and the symmetric group of degree $m$, respectively.
Furthermore  $D_{2^m}$ and $Q_{2^m}$ for $m\geq 3$
are  the dihedral and the generalized quaternion groups of order $2^m$, respectively.
Our main character actually in this paper is {\it the wreathed product $2$-group}
$ C_{2^n}\wr C_2 \cong (C_{2^n}\times C_{2^n})\rtimes C_2  \text{ for }n\geq 2.$

We mean by a $kG$-module a finitely generated left $kG$-module unless stated otherwise.
For a $kG$-module and a $p$-subgroup $P$ of $G$ the Brauer construction $M(P)$ is
defined as in \S27 of \cite{Th} or p.402 in \cite{Bro}. For such $G$ and $P$ we write $\mathcal F_P(G)$ for
the fusion system of $G$ over $P$ as in I.Definition 1.1 of \cite{AKO}.
For two $kG$-modules $M$ and $L$, we write $L\,|\,M$ if $L$ is (isomorphic to) a direct summand of $M$
as a $kG$-module.
We write $k_G$ for the trivial $kG$-module.
For $H\leq G$, a $kG$-module $M$ and a $kH$-module $N$, 
we write ${\mathrm{Res}}^G_H(M)$ for the restriction module of
$M$ from $G$ to $H$, and ${\mathrm{Ind}}_H^G(N)$ for the induced module of $N$ from $H$ to $G$.
For $H\leq G$ we denote the (Alperin-)Scott $kG$-module with respect to $H$ 
by ${\mathrm{Sc}}(G,H)$.
By definition, ${\mathrm{Sc}}(G,H)$ is the unique indecomposable direct summand
of ${\mathrm{Ind}}_H^G(k_H)$ which contains $k_G$ in its top.
We refer the reader to 
\S2 of \cite{Bro} and  in \S 4.8.4 of \cite{NT} for further details on Scott modules.

For the other notations and terminologies, see the books
\cite{NT}, \cite{Gor} and \cite{AKO}.
\end{Notation}

The organization of this paper is as follows.
In \S2 we shall give several theorems and general lemmas
that are useful of our aim, and in \S3 we shall give several lemmas which are on
finite groups with the wreathed $2$-subgroup. 
We shall prove in \S4 several lemmas and a theorem
which are on the Scott module whose vertex is the wreathed $2$-group.
Finally in \S5 our main theorem shall be proved.

\section{General lemmas}
\noindent
In this section we give several lemmas and theorems which play very important role for our purpose.
\begin{Lemma}[{\cite[Lemma 4.2]{KL}}]\label{BaerSuzuki}
Let $Q$ be a normal $2$-subgroup of $G$ such that $G/Q \cong S_3$.
Assume further that there is an involution $t \in G  -  Q $. Then $G$ has a subgroup $H$ such that
$t \in H \cong S_3$.
\end{Lemma}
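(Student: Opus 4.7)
The plan is to apply the Baer--Suzuki theorem to the involution $t$. The key preliminary observation is that $Q$ coincides with $O_2(G)$: indeed $Q$ is already a normal $2$-subgroup of $G$, so $Q\leq O_2(G)$, while on the other hand $O_2(G)Q/Q$ is a normal $2$-subgroup of $G/Q\cong S_3$ and hence trivial, giving $O_2(G)\leq Q$. Consequently the hypothesis $t\in G\setminus Q$ means precisely that $t\notin O_2(G)$.

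Next I would apply the Baer--Suzuki theorem to the $2$-element $t$: since $t\notin O_2(G)$, there exists $g\in G$ such that the subgroup $D:=\langle t,\, t^g\rangle$ is not a $2$-group. Because $D$ is generated by two involutions it is dihedral, say $D\cong D_{2n}$ where $n$ is the order of $t\cdot t^g$. The only prime divisors of $|G|=6|Q|$ are $2$ and $3$, so $n$ is of the form $2^{a}3^{b}$, and the failure of $D$ to be a $2$-group forces $b\geq 1$. Therefore $D$ contains an element $a$ of order $3$ lying in the cyclic rotation subgroup of $D$.

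Finally, in a dihedral group the involution $t$ inverts every element of the rotation subgroup, so $tat^{-1}=a^{-1}$. Setting $H:=\langle t,a\rangle$, the relations $t^{2}=1=a^{3}$ and $tat^{-1}=a^{-1}$ show $H\cong S_3$, while by construction $t\in H$, which is the desired subgroup.

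The only real delicacy in this plan is the very first step, namely confirming that the Baer--Suzuki theorem is applicable by identifying $O_2(G)$ with $Q$; once that is in place the dihedral structure of $\langle t,t^g\rangle$ and the restricted prime divisors of $|G|$ deliver the $S_3$ almost automatically, and no case analysis on the structure of $Q$ is needed.
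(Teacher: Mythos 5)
Your proof is correct and takes essentially the same route the paper (via \cite[Lemma 4.2]{KL}) intends, as the label \texttt{BaerSuzuki} already signals: identify $Q$ with $O_2(G)$, invoke Baer--Suzuki to get a non-$2$-group $\langle t, t^g\rangle$, note it is dihedral with rotation order divisible only by $2$ and $3$ and hence by $3$, and then $t$ together with an order-$3$ rotation generates the desired $S_3$.
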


\begin{Theorem}[{Theorem 1.3 of \cite{IK}}]\label{IK1}
Assume that $P$ is a $p$-subgroup of $G$ and
${\mathcal F}_P(G)$ is saturated. Then the following assertions are equivalent:
\begin{enumerate}
\item[\rm (a)] ${\mathrm{Sc}}(G,P)$ is Brauer indecomposable.
\item[\rm (b)] ${\mathrm{Res}}\,_{Q\,C_G(Q)}^{N_G(Q)} \Big({\mathrm{Sc}}(N_G(Q), N_P(Q))\Big)$ 
is indecomposable for each fully ${\mathcal F}_P(G)$-normalized
subgroup $Q$ of $P$.
\end{enumerate}
If these conditions are satisfied, then $({\mathrm{Sc}}(G,P))(Q)
\cong {\mathrm{Sc}}(N_G(Q),N_P(Q))$ for each fully ${\mathcal F}_P(G)$-normalized
subgroup $Q \leq P$.
\end{Theorem}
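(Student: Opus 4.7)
The plan is to prove the last assertion first---that for every fully $\mathcal{F}_P(G)$-normalized subgroup $Q \leq P$ one has $({\mathrm{Sc}}(G,P))(Q) \cong {\mathrm{Sc}}(N_G(Q), N_P(Q))$ as $kN_G(Q)$-modules---since this identification underlies both implications of the equivalence. To establish it, I would use the standard Mackey-type formula describing the Brauer quotient $({\mathrm{Ind}}_P^G\, k_P)(Q)$ as a direct sum of induced trivial modules indexed by $(P,Q)$-double cosets of elements $g\in G$ with $Q \leq {}^g P$. Since $Q$ is fully $\mathcal{F}_P(G)$-normalized in the saturated fusion system, $N_P(Q)$ is a Sylow $p$-subgroup of $N_G(Q)$ (a standard characterization of fully normalized). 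Combined with the fact that $k_G$ lies in the top of ${\mathrm{Sc}}(G,P)$, passing to the Brauer construction at $Q$ produces $k_{N_G(Q)}$ in the top of $({\mathrm{Sc}}(G,P))(Q)$, so the unique indecomposable summand of ${\mathrm{Ind}}_{N_P(Q)}^{N_G(Q)} k_{N_P(Q)}$ containing the trivial module---namely ${\mathrm{Sc}}(N_G(Q), N_P(Q))$---appears as a summand of $({\mathrm{Sc}}(G,P))(Q)$. A Green-correspondence argument, comparing the vertex $P$ of ${\mathrm{Sc}}(G,P)$ with the $N_G(Q)$-vertex $N_P(Q)$ of its Brauer quotient, would then force the entire Brauer construction to coincide with this Scott module.

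Given the identification, the direction (a)$\Rightarrow$(b) is immediate: specialize Brauer indecomposability to fully $\mathcal{F}_P(G)$-normalized $Q$ and rewrite $({\mathrm{Sc}}(G,P))(Q)$ as ${\mathrm{Sc}}(N_G(Q), N_P(Q))$. For (b)$\Rightarrow$(a), let $Q \leq P$ be arbitrary. Since $\mathcal{F}_P(G)$ is saturated and realized by $G$, there exists $g \in G$ with ${}^g Q = Q' \leq P$ fully $\mathcal{F}_P(G)$-normalized. Conjugation by $g$ yields an isomorphism $N_G(Q) \cong N_G(Q')$ carrying $Q\,C_G(Q)$ to $Q'\,C_G(Q')$, and transports $({\mathrm{Sc}}(G,P))(Q)$ to $({\mathrm{Sc}}(G,P))(Q')$ as $p$-permutation modules (the Brauer construction commutes with $G$-conjugation on $p$-subgroups). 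Hence ${\mathrm{Res}}^{N_G(Q)}_{Q\,C_G(Q)}\bigl(({\mathrm{Sc}}(G,P))(Q)\bigr)$ is indecomposable if and only if its $Q'$-counterpart is, and the latter is granted by (b).

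The main obstacle will be the first step: showing that $({\mathrm{Sc}}(G,P))(Q)$ itself---not merely one of its summands---coincides with ${\mathrm{Sc}}(N_G(Q), N_P(Q))$. Locating the Scott summand is easy from the trivial-module-in-the-top argument, but ruling out additional indecomposable summands requires either a Broué-type exact formula for the Brauer quotient of a trivial source module or a Green-correspondence argument exploiting that ${\mathrm{Sc}}(G,P)$ has vertex $P$ while $N_P(Q)$ is Sylow in $N_G(Q)$ precisely because $Q$ is fully normalized. By contrast, the transport-of-structure step in (b)$\Rightarrow$(a) is routine once the realization of $\mathcal{F}_P(G)$ by $G$-conjugation is invoked.
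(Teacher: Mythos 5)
First, a framing remark: this theorem is not proved in the paper at all---it is quoted verbatim from Ishioka--Kunugi \cite[Theorem~1.3]{IK} and used as a black box. So there is no ``paper's own proof'' to compare against line by line.

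That said, your proposed route has a genuine structural flaw. You plan to prove the final identification $(\Sc(G,P))(Q)\cong\Sc(N_G(Q),N_P(Q))$ \emph{unconditionally} for every fully $\CF_P(G)$-normalized $Q$, and then read off both implications from it. But this identification is stated in the theorem as a \emph{consequence} of (a) (equivalently (b)) holding---it is not a standalone fact. What is unconditionally true, and what your trivial-module-in-the-top argument really gives, is only that $\Sc(N_G(Q),N_P(Q))$ appears as a direct summand of $(\Sc(G,P))(Q)$ (this is Lemma~3.1 of \cite{IK} together with Theorem~1.7 of \cite{K}). The Green-correspondence step you invoke to upgrade ``is a summand'' to ``is all of it'' does not work: Green correspondence governs summands in the passage between $G$ and $N_G(P)$ with vertex $P$, not between $G$ and $N_G(Q)$ for a proper subgroup $Q<P$, and the fact that $N_P(Q)$ is Sylow in $N_G(Q)$ (a consequence of $Q$ being fully normalized) constrains vertices of summands of $(\Sc(G,P))(Q)$ but does nothing to rule out \emph{additional} indecomposable summands with vertices strictly between $Q$ and $N_P(Q)$. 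Burry--Carlson--Puig shows any such extra summand has vertex strictly larger than $Q$, but that is where the argument must stop without a further hypothesis.

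Concretely, you can see from the paper itself that this step is not free: in the proof of Theorem~\ref{product}, the authors spend the first half of the argument establishing precisely that $M(\Delta Q)$ is indecomposable as a $kN_{\mathfrak G}(\Delta Q)$-module, and they need an induction on $|P:Q|$ together with Burry--Carlson--Puig to do it; and in Lemma~\ref{centralizer2} the indecomposability of $M(Q)$ is derived \emph{from} the hypothesis that its restriction to $QC_G(Q)$ is indecomposable, not the other way around. Your reduction to fully normalized $Q$ via conjugation in (b)$\Rightarrow$(a) is fine as far as it goes (this matches the cited ``Lines 12--18'' reduction in \cite{IK}), but the remaining work---showing the restriction of $M(Q)$ to $QC_G(Q)$ is indecomposable given only (b), which entails showing $M(Q)$ has no extra summands---is exactly the part your sketch leaves ungrounded. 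The correct logical order is: assume (b), deduce $M(Q)=\Sc(N_G(Q),N_P(Q))$ (e.g.\ by an inductive elimination of extra summands), and only then obtain the restriction statement; the identification cannot be proved first and for free.
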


Actually the following result due to Ishioka and Kunugi is so useful for our aim.

\begin{Theorem}[{Theorem 1.4 of \cite{IK}}]\label{IK2}
Assume that $P$ is a $p$-subgroup of $G$ and ${\mathcal F}_P(G)$ is
saturated. Let $Q$ be a fully ${\mathcal F}_P(G)$-normalized subgroup of $P$. Assume further that there exists a subgroup
$H_Q$ of $N_G(Q)$ satisfying the following conditions:
\begin{enumerate}
\item[\rm (a)] $N_P(Q)$ is a Sylow $p$-subgroup of  $H_Q$ and
\item[\rm (b)] $|N_G(Q):H_Q|=p^a$ for an integer $a\geq 0$.
\end{enumerate}
Then ${\mathrm{Res}}\,_{Q\,C_G(Q)}^{N_G(Q)} \Big({\mathrm{Sc}}(N_G(Q), N_P(Q))\Big)$ is indecomposable.
\end{Theorem}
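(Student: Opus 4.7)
The plan is to exhibit one canonical indecomposable summand of $\Res^{N_G(Q)}_{QC_G(Q)}\bigl(\Sc(N_G(Q),N_P(Q))\bigr)$ and then invoke the auxiliary subgroup $H_Q$ to exclude all others. Throughout, set $N:=N_G(Q)$, $U:=N_P(Q)$, $C:=QC_G(Q)$, $H:=H_Q$ and $M:=\Sc(N,U)$, and note that $C\trianglelefteq N$ because $N$ normalizes both $Q$ and $C_G(Q)$.

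I would first produce the canonical Scott summand. Since $Q$ is fully $\mathcal{F}_P(G)$-normalized in a saturated fusion system, $Q$ is also fully centralized, giving $C_P(Q)\in\Syl_p(C_G(Q))$, and a short calculation yields $U\cap C=QC_P(Q)\in\Syl_p(C)$. As $k_N$ sits in the top of $M$ and restricts to $k_C$, the module $\Sc(C,QC_P(Q))$ occurs as a direct summand of $\Res^N_C(M)$; what remains is to show it is the only one.

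Next I would use $H$ to rule out further summands. The hypothesis $U\in\Syl_p(H)$ makes $\Sc(H,U)$ the principal Scott module of $H$, and $\Ind^N_H\Sc(H,U)$ is a direct summand of $\Ind^N_U(k)$ containing $k_N$ in its top, so $M\mid\Ind^N_H\Sc(H,U)$. Applying the Mackey formula to $\Res^N_C\Ind^N_H\Sc(H,U)$ and using $C\trianglelefteq N$ together with $|N:H|=p^a$, each Mackey piece is a conjugate of $\Ind^C_{C\cap H}\Res^H_{C\cap H}(\Sc(H,U))$, with the double-coset set $C\backslash N/H$ controlled by the $p$-subgroup $HC/C\leq N/C$. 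Granted that $\Res^H_{C\cap H}(\Sc(H,U))$ is indecomposable of vertex $QC_P(Q)$, and that the remaining double cosets yield pieces of strictly smaller vertex, the Brauer-construction count $M(U)\cong k\cong\Sc(C,QC_P(Q))(QC_P(Q))$ forces $\Res^N_C(M)=\Sc(C,QC_P(Q))$.

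The main obstacle is the vertex control in the second step: establishing both the indecomposability of $\Res^H_{C\cap H}(\Sc(H,U))$ and the vertex bound for the off-diagonal Mackey pieces. This is exactly where the hypothesis on $H_Q$ is used essentially — the Sylow condition $U\in\Syl_p(H)$ ensures $\Sc(H,U)$ has the cleanest possible vertex, while the $p$-power index $|N:H|=p^a$ forces every vertex-$U$ summand of $M$ to already be visible inside $\Ind^N_H\Sc(H,U)$, leaving the non-trivial double cosets with vertex strictly smaller than $QC_P(Q)$.
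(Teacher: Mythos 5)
You should first be aware that the paper offers no proof of this statement: it is quoted verbatim as Theorem~1.4 of Ishioka--Kunugi \cite{IK} and used as a black box, so there is no in-paper argument to compare your proposal against. Judged on its own terms, your high-level plan --- exhibit the canonical summand $\mathrm{Sc}(C,\,QC_P(Q))$ of $\mathrm{Res}^N_C(M)$ via the Kawai-type argument, then bound the remaining pieces using $M\mid\mathrm{Ind}^N_H(\mathrm{Sc}(H,U))$ and Mackey --- is a reasonable starting point, and your first paragraph (the existence of the Scott summand, using that $Q$ fully normalized implies fully centralized so that $QC_P(Q)\in\mathrm{Syl}_p(C)$) is correct and matches Lemma~2.4 of the paper. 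But the step you yourself flag as ``the main obstacle'' is a genuine gap, and the vertex-control claim you lean on is actually false.

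Concretely: write $S:=QC_P(Q)=C\cap U$, which is a Sylow $p$-subgroup of $C$. Since $C\trianglelefteq N$, for every double-coset representative $g\in N$ and every $h\in H$ one has
$$
(C\cap{}^{g}H)\cap{}^{gh}U \;=\; C\cap{}^{gh}U \;=\; {}^{gh}(C\cap U) \;=\; {}^{gh}S,
$$
again a Sylow $p$-subgroup of $C$. Thus the Mackey pieces attached to the \emph{non-identity} cosets in $C\backslash N/H$ also have summands whose vertex is $C$-conjugate to $S$ --- they are not ``of strictly smaller vertex''. Worse, since $S\leq C\cap H\leq C\cap{}^gH$ always contains a Sylow $p$-subgroup of $C$, one gets $\mathrm{Sc}(C,\,C\cap{}^gH)=\mathrm{Sc}(C,S)$ for every $g$, so \emph{each} of the $|C\backslash N/H|=|N:CH|$ (a nontrivial $p$-power, in general) Mackey pieces contributes a copy of $\mathrm{Sc}(C,S)$. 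The Mackey decomposition by itself therefore only bounds the multiplicity of $\mathrm{Sc}(C,S)$ in $\mathrm{Res}^N_C(M)$ by $|N:CH|$, not by $1$. The two auxiliary assertions in your closing paragraph are also not justified: indecomposability of $\mathrm{Res}^H_{C\cap H}(\mathrm{Sc}(H,U))$ is essentially a special case of the very statement to be proved (note $C\cap H\trianglelefteq H$ and $S\in\mathrm{Syl}_p(C\cap H)$), so invoking it is circular in spirit; and $M(U)\cong k$ is not correct in general --- $\mathrm{Sc}(N,U)$ need not even have vertex $U$, and when it does, $M(U)$ is the projective cover of $k$ over $k[N_N(U)/U]$, which is one-dimensional only when $|N_N(U)/U|$ is prime to $p$. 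Some further input from the actual \cite{IK} argument is needed to collapse the multiplicity to $1$; the Sylow and $p$-power-index hypotheses on $H_Q$ do more work there than simple vertex bookkeeping on Mackey components.
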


\begin{Lemma}[{\cite{K}}]\label{centralizer}
Let $G$ be a finite group with a $p$-subgroup $P$. 
\begin{enumerate}
\item[\rm (i)]
For every subgroup $Q \leq P$, $Q\,C_P(Q)$ is a maximal element of 
the set 
$N_P(Q) \cap_{N_G(Q)} Q\,C_G(Q):=\{ {}^g\!N_P(Q) \cap Q\,C_G(Q) \ | \ g \in N_G(Q)\}.$
\item[\rm (ii)] We have that
 $${\mathrm{Sc}}(Q\,C_G(Q), Q\,C_P(Q))\,\Big|\, 
\Res\,_{Q\,C_G(Q)}^{N_G(Q)}\Big({\mathrm{Sc}}(N_G(Q), N_P(Q))\Big),$$
and
 $${\mathrm{Sc}}(C_G(Q), C_P(Q))\,\Big|\,
\Res\,_{C_G(Q)}^{N_G(Q)}\Big({\mathrm{Sc}}(N_G(Q), N_P(Q))\Big).$$
\end{enumerate}
\end{Lemma}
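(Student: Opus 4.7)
The plan is to handle (i) by a direct order computation and then derive (ii) via Mackey's formula together with a Brauer construction that pins down the vertex.

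\textbf{Part (i).} For $g \in N_G(Q)$, the relation $Q = {}^gQ \le {}^gN_P(Q)$ lets one compute the intersection directly: writing a generic $y \in {}^gN_P(Q) \cap QC_G(Q)$ as $y = qc$ with $q \in Q$ and $c \in C_G(Q)$, the identity $c = q^{-1}y$ forces $c \in {}^gN_P(Q) \cap C_G(Q) = C_{{}^gN_P(Q)}(Q) = {}^gC_{N_P(Q)}(Q) = {}^gC_P(Q)$. Hence ${}^gN_P(Q) \cap QC_G(Q) = Q \cdot {}^gC_P(Q)$, whose order is $|Q|\,|C_P(Q)|/|{}^gZ(Q)| = |QC_P(Q)|$, independent of $g$. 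So every element of the set has the same cardinality as $QC_P(Q)$, and the $g=1$ representative $QC_P(Q)$ is automatically maximal.

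\textbf{Part (ii).} Apply Mackey to obtain
$$\Res_{QC_G(Q)}^{N_G(Q)} \Ind_{N_P(Q)}^{N_G(Q)} k \;\cong\; \bigoplus_{g} \Ind_{QC_G(Q) \cap {}^gN_P(Q)}^{QC_G(Q)} k.$$
Since $\Sc(N_G(Q),N_P(Q))$ is a direct summand of $\Ind_{N_P(Q)}^{N_G(Q)} k$ surjecting onto $k_{N_G(Q)}$, the restriction $M := \Res_{QC_G(Q)}^{N_G(Q)} \Sc(N_G(Q),N_P(Q))$ surjects onto $k_{QC_G(Q)}$, so some indecomposable summand $N$ of $M$ has $k$ in its top. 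By the standard characterisation of indecomposable $p$-permutation modules with trivial top, $N \cong \Sc(QC_G(Q),V)$ where $V$ is the vertex of $N$; and being a summand of the Mackey decomposition forces $V \le_{QC_G(Q)} QC_G(Q) \cap {}^gN_P(Q)$ for some $g$, which by (i) yields the upper bound $|V| \le |QC_P(Q)|$.

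The reverse inequality is what I expect to be the main obstacle. I would obtain it via the Brauer construction: applying $(-)(QC_P(Q))$, which is exact on $p$-permutation modules, to the surjection $M \twoheadrightarrow k$ yields a surjection $M(QC_P(Q)) \twoheadrightarrow k$. The source is nonzero because $QC_P(Q) \le N_P(Q)$ lies inside the vertex $N_P(Q)$ of $\Sc(N_G(Q),N_P(Q))$; selecting $N$ via Krull--Schmidt so that the factorisation $M(QC_P(Q)) \to N(QC_P(Q)) \to k$ is nonzero forces $N(QC_P(Q)) \ne 0$ and hence $QC_P(Q) \le_{QC_G(Q)} V$. Combined with the upper bound, $V$ is $QC_G(Q)$-conjugate to $QC_P(Q)$, so $N \cong \Sc(QC_G(Q), QC_P(Q))$, proving the first divisibility. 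The second divisibility $\Sc(C_G(Q),C_P(Q)) \mid \Res_{C_G(Q)}^{N_G(Q)} \Sc(N_G(Q),N_P(Q))$ follows by the same argument with $C_G(Q)$ and $C_P(Q)$ in place of $QC_G(Q)$ and $QC_P(Q)$, using the analogous identity $N_P(Q) \cap C_G(Q) = C_P(Q)$.
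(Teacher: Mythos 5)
Your part (i) is correct and coincides with the paper's argument: both compute ${}^gN_P(Q)\cap Q\,C_G(Q)=Q\cdot{}^g C_P(Q)=Q\,C_{{}^gP}(Q)$ and observe that all these intersections have order $|Q\,C_P(Q)|$, so the $g=1$ element is maximal.

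Part (ii) is where your proposal departs from the paper, and it contains a genuine gap. The paper simply invokes \cite[Theorem 1.7]{K} (Kawai), for which (i) supplies exactly the required maximality hypothesis; the second divisibility follows the same way since ${}^gN_P(Q)\cap C_G(Q)={}^gC_P(Q)$ all have equal order. You instead attempt to reprove Kawai's result. Your Mackey argument giving the upper bound $|V|\le|Q\,C_P(Q)|$ is sound, but the lower bound rests on the assertion that the Brauer construction $(-)(Q\,C_P(Q))$ is \emph{exact} on $p$-permutation modules, which is false. For example, with $G=C_2\times C_2$, $H\le G$ of index $2$, the non-split sequence $0\to k\to k[G/H]\to k\to 0$ consists of $p$-permutation modules, yet applying $(-)(G)$ yields $k\to 0\to k$, which is not exact. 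The weaker statement you actually use --- that a surjection $M\twoheadrightarrow k$ with $M(R)\ne 0$ induces a non-zero map $M(R)\to k$ --- also fails in general: the indecomposable summand of $M$ supporting the trivial quotient and a summand with $R$ inside its vertex need not be the same one (take $M=k[G/\langle s\rangle]\oplus k[G/\langle t\rangle]$ for $G=C_2\times C_2$, $R=\langle t\rangle$, with the surjection supported on the first summand). That the augmentation of $\Sc(N_G(Q),N_P(Q))$ does survive the Brauer construction at subgroups of its vertex is precisely the non-trivial content of Kawai's theorem (see also \cite[Lemma 3.1]{IK}), so as written your argument presupposes what it sets out to prove and does not close.
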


\begin{proof}
Let $g \in N_G(Q)$, then we have that 
$${}^g\!(N_P(Q)) \cap Q\,C_G(Q) = N_{^g\!P}(Q) \cap Q\,C_G(Q)=Q\,C_{^g\!P}(Q).$$ 
Moreover, $|Q\,C_P(Q)|=|{}^g (Q\,C_P(Q))|=|Q\,C_{^g\!P}(Q)|$ since $g \in N_G(Q)$. So each 
element in $N_P(Q) \cap_{N_G(Q)} Q\,C_G(Q)$ has the same order. Hence, by letting $g=1$, 
$Q\,C_P(Q)=N_P(Q) \cap Q\,C_G(Q) $ 
is a maximal element. Therefore, from \cite[Theorem 1.7]{K},
$$\Sc(Q\,C_G(Q), Q\,C_P(Q))\,\Big|\, \Res\,_{Q\,C_G(Q)}^{N_G(Q)}\Big(\Sc(N_G(Q), N_P(Q))\Big).$$
The final assertion follows from a similar argument.
\end{proof}

\begin{Lemma}\label{centralizer2}
Let $G$ be a finite group with a $p$-subgroup $P$.
Assume that $\CF_P(G)$ is saturated, and fix a subgroup $Q\leq P$
which is fully $\CF_P(G)$-normalized.
If  
$
{\mathrm{Res}}\,^{N_G(Q)}_{Q\,C_G(Q)}\Big( {\mathrm{Sc}}(G,P)(Q)\Big)
$ is indecomposable, then we have that
$$ 
{\mathrm{Res}}\,^{N_G(Q)}_{Q\,C_G(Q)}\Big( {\mathrm{Sc}}(G,P)(Q)\Big)
\cong {\mathrm{Sc}}(Q\,C_G(Q), \, Q\,C_P(Q)).
$$
\end{Lemma}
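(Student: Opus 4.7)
The plan is to exhibit $\Sc(Q\,C_G(Q),\,Q\,C_P(Q))$ as a direct summand of the restriction $\Res^{N_G(Q)}_{Q\,C_G(Q)}\bigl(\Sc(G,P)(Q)\bigr)$; once that is done, the indecomposability hypothesis on this restriction forces the two modules to coincide, giving the asserted isomorphism. The whole argument thus boils down to chaining two direct-summand relations.

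First I would invoke the (standard) fact that for a fully $\CF_P(G)$-normalized subgroup $Q\leq P$, the Scott module $\Sc(N_G(Q),N_P(Q))$ appears as a direct summand of $\Sc(G,P)(Q)$ as $k[N_G(Q)]$-modules. Because $\CF_P(G)$ is saturated, $N_P(Q)$ is a Sylow $p$-subgroup of $N_G(Q)$; applying the Mackey formula to $(\Ind^G_P k_P)(Q)$, using $\Sc(G,P)\mid\Ind^G_P(k_P)$, and collecting the summand coming from a double-coset representative $g$ for which $Q\leq {}^gP$ and $N_{{}^gP}(Q)=N_P(Q)$ produces a copy of $\Ind^{N_G(Q)}_{N_P(Q)}(k)$, whose unique indecomposable summand with $k_{N_G(Q)}$ in its top is $\Sc(N_G(Q),N_P(Q))$. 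This is precisely the observation that underpins the second conclusion of Theorem \ref{IK1}.

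Next I would apply Lemma \ref{centralizer}(ii), which already gives
$$\Sc(Q\,C_G(Q),\,Q\,C_P(Q))\,\Big|\,\Res^{N_G(Q)}_{Q\,C_G(Q)}\bigl(\Sc(N_G(Q),\,N_P(Q))\bigr).$$
Chaining this with the first step yields
$$\Sc(Q\,C_G(Q),\,Q\,C_P(Q))\,\Big|\,\Res^{N_G(Q)}_{Q\,C_G(Q)}\bigl(\Sc(G,P)(Q)\bigr),$$
and the hypothesis that the right-hand side is indecomposable immediately forces the two sides to be isomorphic.

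The main obstacle is justifying the first step, namely that $\Sc(N_G(Q),N_P(Q))\mid\Sc(G,P)(Q)$ whenever $Q$ is fully $\CF_P(G)$-normalized, \emph{without} yet assuming Brauer indecomposability of $\Sc(G,P)$. This is known in the literature (and is what makes Theorem \ref{IK1} work in one direction), but it is the only nontrivial input; once it is in hand, the rest of the argument is just bookkeeping with direct summands together with the indecomposability hypothesis.
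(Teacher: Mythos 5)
Your proof is correct and uses the same essential ingredients as the paper's — Lemma \ref{centralizer}(ii) plus the known fact that $\Sc(N_G(Q),N_P(Q))$ is a direct summand of $\Sc(G,P)(Q)$ for fully normalized $Q$ (which the paper gets via the opening lines of the proof of \cite[Theorem 1.3]{IK}, and which is also cited later in the paper via \cite[Lemma 3.1]{IK} and \cite[Theorem 1.7]{K}). The only structural difference is minor: you chain two direct-summand relations and invoke indecomposability at the very end, whereas the paper first deduces that $M(Q)$ is indecomposable, so $M(Q)\cong\Sc(N_G(Q),N_P(Q))$, and then applies Lemma \ref{centralizer}(ii); both routes are valid and amount to the same thing.
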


\begin{proof}
Set $M:={\mathrm{Sc}}(G,P)$.
First, since from the assumption that 
$
{\mathrm{Res}}\,^{N_G(Q)}_{Q\,C_G(Q)}\Big( M(Q)\Big)
$ 
is indecomposable, we get also 
that $M(Q)$ is indecomposable as $kN_G(Q)$-module. 
Hence \cite[the first four lines of the proof of Theorem 1.3]{IK}
implies that 
$$
 M(Q) \cong {\mathrm{Sc}}(N_G(Q), N_P(Q)),
$$
so that 
\begin{equation}\label{M(Q)}
{\mathrm{Res}}\,^{N_G(Q)}_{Q\,C_G(Q)}\Big( {\mathrm{Sc}}(N_G(Q), N_P(Q))\Big)
\text{ is indecomposable}
\end{equation}
by the assumption.
On the other hand, Lemma \ref{centralizer}(ii) yields that
\begin{equation}\label{centralizerQ}
{\mathrm{Sc}}(Q\,C_G(Q), Q\,C_P(Q))
\,\Big|\,
\Res\,_{Q\,C_G(Q)}^{N_G(Q)}\Big({\mathrm{Sc}}(N_G(Q), N_P(Q))\Big).
\end{equation}
Therefore, (\ref{M(Q)}) and (\ref{centralizerQ}) imply the assertion.
\end{proof}

The following lemma is perhaps well-known for experts
(see \cite[p.102]{KKM} for instance. See also \cite[Proposition 1.5]{BP}).

\begin{Lemma}\label{Kunugi}
Suppose that $M$ is a (possibly decomposable) trivial source $kG$-module, and
let $P\leq G$ be a $p$-subgroup of $G$, and that $Q\unlhd P$.
Then, it holds that
$$(M(Q))(P)\cong\Res\,^{N_G(P)}_ {N_G(P)\cap N_G(Q)}\,(M(P)).$$
\end{Lemma}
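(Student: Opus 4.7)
The plan is to reduce to the classical transitivity of the Brauer construction for $p$-permutation modules, which says that for $Q\unlhd P$ one has $(M(Q))(P/Q)\cong M(P)$, combined with the observation that $Q$ acts trivially on $M(Q)$, so $(M(Q))(P)$ and $(M(Q))(P/Q)$ coincide as vector spaces. Since $Q\unlhd P$ forces $P\le N_G(Q)$, the left-hand side is already a module over $N_{N_G(Q)}(P)=N_G(P)\cap N_G(Q)$, which matches the group on the right-hand side.

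I would first build the canonical comparison map by Brauer functoriality: the inclusion $M^P\subseteq M^Q$ composed with $\mathrm{Br}_Q$ lands inside $(M(Q))^P$ (since $\mathrm{Br}_Q$ is $P$-equivariant and $M^P$ is $P$-fixed), and post-composing with the quotient onto the Brauer construction gives a map $\phi\colon M^P\to(M(Q))(P)$.

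The key technical step is to show that $\phi$ vanishes on $\sum_{R<P}\mathrm{Tr}^P_R(M^R)$, so it descends to a map $\overline{\phi}\colon M(P)\to (M(Q))(P)$. For $R<P$ with $Q\le R$ the image under $\mathrm{Br}_Q$ of such a trace already lies in $\mathrm{Tr}^P_R((M(Q))^R)$, which is killed in the quotient. For $R<P$ not containing $Q$, factor $\mathrm{Tr}^P_R=\mathrm{Tr}^P_{RQ}\circ\mathrm{Tr}^{RQ}_R$; since $Q$ acts trivially on $M(Q)$, applying $\mathrm{Br}_Q$ turns $\mathrm{Tr}^{RQ}_R$ into multiplication by $[Q:Q\cap R]$, a power of $p$ greater than $1$, hence zero in characteristic $p$.

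To conclude that $\overline{\phi}$ is an isomorphism of $k(N_G(P)\cap N_G(Q))$-modules, I would pass to a $P$-stable permutation basis $B$ of $M$, available because $M$ is of trivial source: then $M(P)$ has basis $B^P$, while $(M(Q))(P)$ has basis $(B^Q)^P=B^P$, under which $\overline{\phi}$ becomes the identity. Equivariance over $N_G(P)\cap N_G(Q)$ is automatic from the functoriality of $\mathrm{Br}$, producing the asserted identification with $\mathrm{Res}^{N_G(P)}_{N_G(P)\cap N_G(Q)} M(P)$. Alternatively one may simply invoke Brou\'e's transitivity of the Brauer construction, e.g.\ \cite[\S 27]{Th}. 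The main obstacle is keeping the group actions straight in the trace-vanishing step; once that is in place, the bijection is a direct check on a permutation basis.
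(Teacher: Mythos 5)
The paper does not prove Lemma~\ref{Kunugi}; it simply cites it as well known (p.~102 of \cite{KKM} and \cite[Prop.~1.5]{BP}), so your proposal is supplying an argument the authors omitted, and its overall strategy --- build the canonical map $M^P \to (M(Q))(P)$, show it kills the trace ideal, then verify bijectivity on a $P$-stable permutation basis, the whole thing being the standard ``transitivity of the Brauer construction'' for $p$-permutation modules --- is exactly the right one. One step, however, is phrased incorrectly. In the case $Q\not\leq R$ you write that ``applying $\mathrm{Br}_Q$ turns $\mathrm{Tr}^{RQ}_R$ into multiplication by $[Q:Q\cap R]$, hence zero in characteristic $p$.'' That reading would require first applying $\mathrm{Br}_Q$ to $m\in M^R$, which is not defined since $m$ need not be $Q$-fixed; and the operator $\mathrm{Tr}^{RQ}_R$ is not literally scalar multiplication on $M^R$. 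The correct observation is more direct: choosing coset representatives of $R$ in $RQ$ from $Q$, one has $\mathrm{Tr}^{RQ}_R(m)=\mathrm{Tr}^Q_{Q\cap R}(m)$ with $Q\cap R<Q$, so $\mathrm{Tr}^{RQ}_R(m)$ lies in $\sum_{S<Q}\mathrm{Tr}^Q_S(M^S)=\ker\mathrm{Br}_Q$ by definition of the Brauer quotient; hence $\mathrm{Br}_Q\circ\mathrm{Tr}^P_R=\mathrm{Tr}^P_{RQ}\circ\mathrm{Br}_Q\circ\mathrm{Tr}^{RQ}_R=0$. No appeal to divisibility by $p$ is needed (and if $Q$ is cyclic of prime order and $Q\cap R=1$, the index is $p$, but that is a coincidence, not the mechanism). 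With that correction the argument is sound; alternatively, the permutation-basis verification you give at the end already establishes the bijection on the nose and sidesteps the issue, and, as you yourself remark, one may also simply cite Th\'evenaz~\cite[\S 27]{Th}.
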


\begin{Lemma}\label{Lin2.5}
Suppose that $P\leq G$ is a $p$-subgroup of $G$, and set $\CF:=\CF_P(G)$.
If $\CF$ is saturated and if $Q$ is a fully $\CF$-normalized subgroup of $P$, then
$$N_P(Q)/C_P(Q) \in{\mathrm{Syl}}_p(N_G(Q)/C_G(Q))$$
and that $Q$ is fully $\CF$-centralized in $P$.
\end{Lemma}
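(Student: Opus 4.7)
The lemma is essentially a direct restatement of the Sylow axiom that forms part of the definition of a saturated fusion system, as given in \cite[Part~I, Definition~2.2]{AKO}. The plan is to rewrite that axiom in the concrete setting $\CF = \CF_P(G)$ via the standard identifications
$$ \Aut_\CF(Q) \ = \ N_G(Q)/C_G(Q), \qquad \Aut_P(Q) \ = \ N_P(Q)/C_P(Q), $$
which follow immediately from the construction of $\CF_P(G)$: its morphisms are conjugations by elements of $G$, and the inner automorphisms of $Q$ induced by elements of $P$ are precisely the conjugations by elements of $N_P(Q)$ modulo those acting trivially on $Q$, i.e. modulo $C_P(Q)$.

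The Sylow axiom states that for every fully $\CF$-normalized subgroup $Q \leq P$, the subgroup $Q$ is also fully $\CF$-centralized and $\Aut_P(Q)$ is a Sylow $p$-subgroup of $\Aut_\CF(Q)$. Applying this axiom to the given $Q$, and then substituting the identifications above, at once produces the two conclusions $N_P(Q)/C_P(Q) \in \Syl_p(N_G(Q)/C_G(Q))$ and ``$Q$ is fully $\CF$-centralized in $P$''.

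There is essentially no technical obstacle here beyond matching conventions: one only needs to verify that the notion of ``saturated fusion system'' used throughout the paper coincides with that in \cite{AKO}, which is the reference explicitly adopted for fusion systems in the \textbf{Notation} section. Accordingly, no additional group-theoretic input is required, and the lemma reduces to a single invocation of the Sylow axiom together with the dictionary between the fusion-theoretic and the group-theoretic languages.
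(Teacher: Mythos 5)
Your argument is correct and takes essentially the same route as the paper: both reduce the statement to a standard fact about saturated fusion systems by identifying $\Aut_\CF(Q)$ with $N_G(Q)/C_G(Q)$ and $\Aut_P(Q)$ with $N_P(Q)/C_P(Q)$. The paper simply cites Linckelmann's Proposition~2.5, which is precisely the content you extract from the saturation axioms; the only thing worth double-checking on your end is the exact formulation of ``saturated'' in the reference you cite, since if one uses the Roberts--Shpectorov definition (via fully automized and receptive subgroups) rather than the Broto--Levi--Oliver Sylow/extension axioms, then the implication ``fully normalized $\Rightarrow$ fully centralized and $\Aut_P(Q)\in\Syl_p(\Aut_\CF(Q))$'' is a short lemma rather than a verbatim axiom, and the appropriate citation is the corresponding equivalence result rather than the definition itself.
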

\begin{proof} Follows from \cite[Proposition 2.5]{L}.
\end{proof}

\section{{}Lemmas on a finite group $G$ having a $2$-subgroup $C_{2^n}\wr C_2$}

\noindent
We give in this section kind of group theory for our particular interest
which is the wreathed $2$-group $C_{2{^n}}\wr C_2$.

\begin{Definition}
A wreathed $2$-group $C_{2^n}\wr C_2$ is a group 
$P$ which is a wreath product of a cyclic group of order $2^n$ 
for $n \geq 2$ with a cyclic group
of order $2$. More precisely, $P$ is generated by three elements $a, b$ and $t$
and defined as follows,  where
$$ a^{2^n}= b^{2^n}=1, \ t^2=1, \ a^t=b, \ b^t=a, \ a b=b a,$$
$$P:=C_{2^n}\wr C_2 =(\langle a\rangle\times\langle b\rangle)\rtimes\langle t\rangle
\cong (C_{2^n}\times C_{2^n})\rtimes C_2.$$
Notice $P$ has order $2^{2n+1}$ and $Z(P)=\langle ab \rangle \cong C_{2^n}$. 
Define the base subgroup $P_0$ of $P$ by 
$$P_0:=\langle a\rangle\times\langle b\rangle\cong C_{2^n}\times C_{2^n}.$$
We remark here that all the notation $P$, $P_0$, $a, b, t$ and $n$ shall be fixed
from now on till the end of this paper.
\end{Definition}

\begin{Lemma}\label{CentralizerOutsideP0}
If $x\in P-P_0$,
then $|C_P(x)|=2^{n+1}$ and $C_P(x) \cap P_0=Z(P)(=\langle ab\rangle\cong C_{2^n}).$
\end{Lemma}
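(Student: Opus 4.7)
The plan is to write any $x \in P - P_0$ as $x = a^i b^j t$ for some integers $0 \le i, j < 2^n$ (since $P - P_0$ is the single coset $P_0 t$), and then to compute $C_P(x)$ by splitting $P$ into the two cosets $P_0$ and $P_0 t$ of $P_0$.

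First I would record the pushing relations $t\, a^k = b^k\, t$ and $t\, b^l = a^l\, t$, obtained by induction from $a^t = b$ and $b^t = a$ (equivalently $ta = bt$ and $tb = at$). With these in hand, for an arbitrary $y = a^k b^l \in P_0$ one computes
\[
yx = a^{k+i} b^{l+j} t, \qquad xy = a^i b^j \cdot t a^k b^l = a^i b^j \cdot a^l b^k t = a^{i+l} b^{j+k} t,
\]
so $yx = xy$ is equivalent to $k \equiv l \pmod{2^n}$. This identifies $C_{P_0}(x) = \langle ab \rangle = Z(P)$, which simultaneously settles the second assertion $C_P(x) \cap P_0 = Z(P)$ and contributes $2^n$ centralizing elements inside $P_0$.

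Next I would repeat the calculation on the other coset: for $y = a^k b^l t$, both $yx$ and $xy$ land in $P_0$ because $t^2 = 1$, and a parallel pushing computation reduces $yx = xy$ to the single congruence $k - l \equiv i - j \pmod{2^n}$. For each of the $2^n$ choices of $k$ there is exactly one $l$ solving this congruence, contributing a further $2^n$ elements to $C_P(x)$.

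Adding the two contributions yields $|C_P(x)| = 2^n + 2^n = 2^{n+1}$, as claimed. There is no real obstacle here — the whole argument is bookkeeping with the twisted product in $P$; the only point requiring care is keeping track of the order of factors when commuting $t$ past powers of $a$ and $b$. One may alternatively phrase the first half more conceptually by noting that conjugation by any element of $P_0 t$ acts on the abelian group $P_0$ as the swap $a \leftrightarrow b$, whose fixed-point subgroup is exactly $\langle ab \rangle = Z(P)$.
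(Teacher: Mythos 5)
Your proof is correct, and it takes a genuinely more self-contained route than the paper's. The paper dispatches the first claim by citing Brauer--Wong \cite[Lemma 3]{BW} for the identity $C_P(x)=\langle x,ab\rangle$, then counts $|\langle x,ab\rangle|=|x|\cdot|ab|/|\langle x^2\rangle|=2^{n+1}$ using that $x^2\in Z(P)$ and $\langle x\rangle\cap\langle ab\rangle=\langle x^2\rangle$; the second claim then falls out by the order comparison $|C_P(x)\cap P_0|<2^{n+1}$. You instead compute $C_P(x)$ coset-by-coset directly from the wreath-product relations, finding $C_{P_0}(x)=\langle ab\rangle$ (which gives the second claim immediately) and showing the condition $k-l\equiv i-j\pmod{2^n}$ picks out exactly $2^n$ elements of $P_0t$, hence $|C_P(x)|=2^n+2^n=2^{n+1}$. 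Your version amounts to re-proving the relevant case of the cited Brauer--Wong lemma from scratch; it is longer on the page but avoids an external reference, and it has the small bonus of exhibiting $C_P(x)$ as an explicit set rather than only giving its order. (One can check your set agrees with $\langle x,ab\rangle$: the elements $(ab)^m x = a^{i+m}b^{j+m}t$ are exactly those with $k-l=i-j$.) Both arguments are sound; the paper's is shorter, yours is more elementary and transparent.
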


\begin{proof}
We can write $x:=a^ib^jt$ for some integers $i$ and $j$. Then $x^2=(ab)^{i+j} \in \langle ab \rangle=Z(P).$ Moreover,
since $x \not\in P_0$, we have that 
$\langle x \rangle \cap \langle ab \rangle=\langle x^2 \rangle$. 
Further we have by \cite[Lemma 3]{BW} that $C_P(x)=\langle x, ab \rangle$. 
Note that $C_P(x)=\langle x, ab \rangle=\langle x\rangle \langle ab \rangle$ has order equal to 
$$|x|\cdot |ab|/|\langle x \rangle \cap \langle ab \rangle|=|x|\cdot |ab|/|\langle x^2 \rangle|=2^{n+1}$$
since $x^2 \in Z(P)=\langle ab \rangle$. Note also that 
$Z(P)= \langle ab \rangle  \leq C_P(x) \cap P_0$. On the other hand, by the above order argument we have that 
$|C_P(x)\cap P_0| < 2^{n+1}$ since $x \notin P_0$. As a result, $Z(P)= \langle ab \rangle  = C_P(x) \cap P_0$.
\end{proof}

\begin{Lemma}\label{essential}
Let $G$ be a finite group with a $2$-subgroup 
$P\cong C_{2^n}\wr C_2$ for some $n\geq 2$. Assume that 
$\CF_P(G)$ is saturated,
and that 
the base subgroup $P_0$
is $\CF_P(G)$-essential. Further, let $Q:= \langle a^{2^m} \rangle \times \langle b^{2^m} \rangle $ 
where $0 \leq m \leq n-1$. Then, we have that $N_G(Q) = N_G(P_0) \, C_G(Q)$. 
Moreover, we have that $\Aut_{\CF}(Q) \cong S_3$.
\end{Lemma}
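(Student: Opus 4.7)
My plan is to first pin down $\Aut_\CF(P_0)\cong S_3$ using the essentiality hypothesis, propagate this to $\Aut_\CF(Q)\cong S_3$ for the characteristic subgroup $Q$, and then extract the identity $N_G(Q)=N_G(P_0)\,C_G(Q)$ by a Frattini-type lifting.

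First the setup: $Q=\langle a^{2^m}\rangle\times\langle b^{2^m}\rangle$ is the subgroup of $2^m$-th powers in $P_0$, so it is characteristic in $P_0$ and hence normal in $P$. Therefore $N_P(Q)=P$ is Sylow in $G$ and $Q$ is automatically fully $\CF$-normalized; a short check gives $C_P(Q)=P_0$ for $m\leq n-1$ (since $t$ swaps $a^{2^m}\ne b^{2^m}$). By Lemma \ref{Lin2.5}, $\Aut_P(Q)\cong P/P_0\cong C_2$, generated by $t|_Q$, is a Sylow $2$-subgroup of $\Aut_\CF(Q)$, and similarly $\langle t|_{P_0}\rangle\cong C_2$ is a Sylow $2$-subgroup of $\Aut_\CF(P_0)$.

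Now I would analyze $\Aut_\CF(P_0)$. Since $P_0$ is abelian, $\Aut_\CF(P_0)\leq \Aut(P_0)=\GL_2(\Z/2^n)$, and a direct count gives $|\GL_2(\Z/2^n)|=3\cdot 2^{4n-3}$, so the $2'$-part of $\Aut_\CF(P_0)$ has order dividing $3$. The essentiality hypothesis forces $\Aut_\CF(P_0)$ to contain a strongly $2$-embedded subgroup, which rules out both $\Aut_\CF(P_0)\cong C_2$ (no proper subgroup of even order) and $\Aut_\CF(P_0)\cong C_6$ (normal Sylow $2$), leaving $\Aut_\CF(P_0)\cong S_3$. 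Fix $\sigma\in\Aut_\CF(P_0)$ of order $3$. Because $Q$ is characteristic in $P_0$, $\sigma$ restricts to $\sigma|_Q\in\Aut_\CF(Q)$; reducing to $Q/\Phi(Q)\cong \mathbb{F}_2\times\mathbb{F}_2$ (which coincides with $P_0/\Phi(P_0)$), $\sigma|_Q$ acts via the same order-$3$ element of $\GL_2(\mathbb{F}_2)$ as $\sigma$, so $\sigma|_Q$ itself has order $3$. Repeating the counting argument ($\Aut_\CF(Q)\leq\GL_2(\Z/2^{n-m})$ again has $2'$-part of order dividing $3$) and noting that the nontrivial images of $t|_Q$ and $\sigma|_Q$ in $\GL_2(\mathbb{F}_2)\cong S_3$ do not commute, we conclude $\Aut_\CF(Q)=\langle\sigma|_Q,t|_Q\rangle\cong S_3$.

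Finally, $N_G(P_0)\,C_G(Q)\subseteq N_G(Q)$ is immediate. Conversely, for $g\in N_G(Q)$, the induced automorphism $c_g|_Q\in\Aut_G(Q)=\Aut_\CF(Q)$ is a word in $\sigma|_Q$ and $t|_Q$; both generators come from conjugation by elements of $N_G(P_0)$ since $\sigma\in\Aut_\CF(P_0)=\Aut_{N_G(P_0)}(P_0)$ and $t\in P\leq N_G(P_0)$. Hence some $h\in N_G(P_0)$ satisfies $c_h|_Q=c_g|_Q$, so $h^{-1}g\in C_G(Q)$ and $g\in N_G(P_0)\,C_G(Q)$. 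The main obstacle is the extraction of $\Aut_\CF(P_0)\cong S_3$ from essentiality; the subsequent propagation to arbitrary $Q$ is clean because the $2'$-part of $\GL_2(\Z/2^k)$ has constant order $3$ for every $k\geq 1$, and the final Frattini lift requires only that both generators of $\Aut_\CF(Q)$ already extend to $P_0$.
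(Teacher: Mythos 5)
Your proof is correct and follows the same overall architecture as the paper's: establish $\Aut_\CF(P_0)\cong S_3$, propagate an order-$3$ automorphism down to the characteristic subgroup $Q$, and then get $N_G(Q)=N_G(P_0)\,C_G(Q)$ by lifting automorphisms of $Q$ to $P_0$. The difference is in how the two key facts are justified. Where the paper simply cites Sambale (Proposition 6.12) for $\Aut_\CF(P_0)\cong S_3$ and Craven--Glesser for $|\Aut(Q)|_3=3$, you derive both from scratch: the order formula $|\GL_2(\Z/2^k)|=3\cdot 2^{4k-3}$ bounds the $2'$-part, Lemma~\ref{Lin2.5} pins the Sylow $2$-subgroup at $C_2$, and the strongly-$2$-embedded requirement from essentiality eliminates $C_2$ and $C_6$. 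Likewise, the paper verifies the nontriviality of $\sigma|_Q$ by writing out explicit images of $a^{2^m},b^{2^m}$, whereas you pass to $Q/\Phi(Q)\cong\mathbb F_2^2$ and argue via the induced matrix in $\GL_2(\mathbb F_2)$. Your route buys self-containment at the cost of more hands-on group theory; the paper's is shorter because it leans on references. One small phrasing point: when you say $Q/\Phi(Q)$ ``coincides with $P_0/\Phi(P_0)$,'' that is not literally a coincidence of quotient maps (for $m\geq 1$ the inclusion $Q\hookrightarrow P_0$ induces the zero map on Frattini quotients, since $Q\leq\Phi(P_0)$); what you mean, and what makes the argument work, is that the matrix of $\sigma$ in the ordered basis $(a^{2^m},b^{2^m})$ of $Q/\Phi(Q)$ equals its matrix in the basis $(a,b)$ of $P_0/\Phi(P_0)$. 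The conclusion that $\sigma|_Q$ is nontrivial modulo $\Phi(Q)$ and hence has order $3$ is correct, but you should state the identification explicitly rather than calling the two quotients the same.
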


\begin{proof}
Set $\CF:=\CF_P(G)$.
Observe that $Q \, C_P(Q)=P_0$ and $N_P(Q)=P$, so 
${{}N_P(Q)/Q\,C_P(Q)} \cong C_2$.
Thus, there is an involutory automorphism of $Q$ which is induced from 
conjugation by an element in $P$.
Since $P_0$ is an $\CF$-essential subgroup, 
we have by {{}\cite[Proposition 6.12]{Sam14}} that $N_G(P_0)/C_G(P_0)\cong\Aut_{\CF}(P_0) \cong S_3$. 
This means that there is an automorphism of order 3 
of $P_0$ in $\CF$ that takes $a$ to $b$ and $b$ to $a^{-1} b^{-1}$ (so that $a^{-1} b^{-1}$ maps to $a$). 
This automorphism 
restricts to an automorphism of $Q$. So, we get that 
\begin{equation}\label{S3}    S_3 \leq \Aut_{\CF}(Q)\cong N_G(Q)/C_G(Q).
\end{equation}

{{}

Since $Q\unlhd P$, $Q$ is a fully $\CF$-normalized subgroup of $P$. So that
by Lemma \ref{Lin2.5},
$N_P(Q)/C_P(Q) \in{\mathrm{Syl}}_2(N_G(Q)/C_G(Q))$.
That is,
$|N_G(Q)/C_G(Q)|_2=2$ since we have known that $|N_P(Q)/C_P(Q)|=2$.
Obviously we can consider $N_G(Q)/C_G(Q) \leq{\mathrm{Aut}}(Q)$.
It is well-known that ${\mathrm{Aut}}(Q)$ is a $\{2,3\}$-group and that $|{\mathrm{Aut}}(Q)|_{3}=3$
(see e.g. \cite[line 1 p.5957]{CG}).
Hence by (\ref{S3}),
$$ N_G(Q)/C_G(Q)\cong {\mathrm{Aut}}_\CF(Q) \cong S_3.$$
}\noindent
So the second claim is established. 

To show that $N_G(Q) \leq N_G(P_0) \, C_G(Q)$, we claim fist that every 
automorphism in $\Aut_{\CF}(Q)$ extends to an automorphism in 
$\Aut_{\CF}(P_0) $. Suppose that $\alpha \in \Aut_{\CF}(Q) \cong S_3$. 
If $\alpha$ has order $2$, let's say $\alpha$ is conjugation induced by $t$, then $\alpha$ flips 
$ a^{2^m}$ and  $b^{2^m}$. Then, $\hat{\alpha}:P_0 \to P_0$ that flips $a$ and $b$, lies in $\Aut_{\CF}(P_0)$ 
and $\hat{\alpha} |_Q=\alpha$. If $\beta$ is an automorphism of $Q$ of order $3$, then 
we can choose $\beta$ as follows: $\beta$ takes $ a^{2^m}$ to $ b^{2^m}$ and $b^{2^m}$ to $ a^{-2^m}  b^{-2^m}$.  
Consider $\hat{\beta}:P_0\to P_0$ which takes takes $a$ to $b$ and $b$ to $a^{-1} b^{-1}$, 
then $\beta \in \Aut_{\CF}(P_0)$ and $\hat{\beta}|_Q= \beta$. So the claim on extension of automorphisms is established. 
Therefore, if $g \in N_G(Q)$ then there is an $h \in N_G(P_0)$ 
such that $c_h {\vert}_{Q} =c_g$ which implies that $h^{-1} g \in C_G(Q)$. So we have that 
$g \in N_G(P_0)\,C_G(Q)$. Now we claim that $N_G(Q) \geq N_G(P_0) \, C_G(Q)$. 
Suppose that $g \in N_G(P_0)$, then $c_g \in \Aut_{\CF}(P_0)$. Note that $Q=\Omega_{n-m}(P_0):=\langle g \in P_0 \ : \ g^{2^{n-m}}=1\rangle $,
so $Q$ is a characteristic subgroup of $P_0$. As a result, $c_g$ restricts to an $\CF$-automorphism of $Q$, in other 
words, $g\in N_G(Q)$. This finishes the first assertion.
\end{proof}

\begin{Lemma}\label{Q8}
Assume that $Q$ is a subgroup of $P$ isomorphic to $Q_8$. 
Then, $Q=\langle(ab^{-1})^{2^{n-2}}, a^i b^j t \rangle$ for some $i, \, j$ 
such that $i+j\equiv 2^{n-1} \ (\normalfont{\mathrm{mod}} \ 2^n)$. 
Moreover, $C_P(Q)=Z(P) (=\langle ab\rangle\cong C_{2^n})$. Furthermore, 
any two subgroups of $P$ which are isomorphic to $Q_8$ are $P$-conjugate.
\end{Lemma}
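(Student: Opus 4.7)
The plan is to exploit that $Q_8$ is non-abelian while $P_0$ is abelian, so $Q\not\leq P_0$ and $Q$ must contain some $v=a^ib^jt\in P\setminus P_0$. Since $v^2=(ab)^{i+j}\in\langle ab\rangle=Z(P)$, the order of $v$ is controlled by $i+j\bmod 2^n$. First I would rule out $|v|=2$: if $v^2=1$, then $v$ is the unique central involution of $Q\cong Q_8$, every other non-identity element of $Q$ has order $4$ and squares to $v\notin P_0$, so all six such elements lie outside $P_0$; together with $v\notin P_0$ this forces $Q\cap P_0=\{1\}$, contradicting $[Q:Q\cap P_0]\le [P:P_0]=2$. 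Hence $|v|=4$ and $i+j\equiv 2^{n-1}\pmod{2^n}$, with central involution $v^2=(ab)^{2^{n-1}}$.

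Next I would pin down $Q\cap P_0$. This intersection is a cyclic subgroup of $Q_8$ of order $4$, say $\langle u\rangle$, and writing $u=a^{2^{n-2}\alpha'}b^{2^{n-2}\beta'}$ (the general order-$4$ element of $P_0\cong C_{2^n}\times C_{2^n}$), two constraints narrow $u$ drastically: the equality $u^2=v^2=(ab)^{2^{n-1}}$ forces $\alpha',\beta'$ both odd, and the $Q_8$-relation $vuv^{-1}=u^{-1}$, combined with the identity $t\,a^rb^s\,t=a^sb^r$, forces $\alpha'+\beta'\equiv 0\pmod 4$. The only residue pairs left modulo $4$ are $(1,3)$ and $(3,1)$, both of which give $\langle u\rangle=\langle(ab^{-1})^{2^{n-2}}\rangle$. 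This proves the first assertion.

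For $C_P(Q)=Z(P)$, one inclusion is clear: $Z(P)=\langle ab\rangle$ centralizes $u\in P_0$ (by abelianness of $P_0$) and commutes with $v$ because $(ab)^t=ab$. Conversely, $C_P(Q)\le C_P(v)=\langle v,ab\rangle$ by Lemma \ref{CentralizerOutsideP0}; any element of $C_P(v)\setminus\langle ab\rangle$ has shape $v(ab)^k$, and since $(ab)^k$ commutes with $u$ but $v$ conjugates $u$ to $u^{-1}\neq u$, such an element cannot centralize $Q$. For the final $P$-conjugacy assertion, note that every $Q\cong Q_8$ in $P$ has the \emph{same} intersection $\langle(ab^{-1})^{2^{n-2}}\rangle$ with $P_0$, so two such subgroups differ only in their choice of $v$. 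A direct computation yields $(a^\gamma b^\delta)(a^ib^jt)(a^\gamma b^\delta)^{-1}=a^{i+\gamma-\delta}b^{j+\delta-\gamma}t$; hence $\gamma-\delta\bmod 2^n$ is a free parameter, letting one conjugate any admissible $v$ to any other while fixing $u\in P_0$.

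The main obstacle will be the second step: getting $Q\cap P_0$ up to \emph{equality} (not merely abstract isomorphism) requires simultaneously using $u^2=v^2$ and $vuv^{-1}=u^{-1}$. Once $Q\cap P_0$ has been identified as $\langle(ab^{-1})^{2^{n-2}}\rangle$, the centralizer and conjugacy claims follow from the standard conjugation calculations in $P$ recorded above.
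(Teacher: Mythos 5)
Your proof is correct, and it is organized somewhat differently from the paper's. For the first assertion, the paper works with \emph{two} order-$4$ generators $a^ib^jt$ and $a^kb^\ell t$ of $Q$ lying outside $P_0$, multiplies them to land in $P_0$, and solves a system of four congruences coming from the $Q_8$ relations; you instead show directly that $Q\cap P_0$ is cyclic of order $4$, parametrize its generator $u\in P_0$, and pin $u$ down using the two relations $u^2=v^2$ and $vuv^{-1}=u^{-1}$ with a single generator $v$ outside $P_0$. Your route also supplies a detail the paper leaves implicit (that the coset representative $v$ of $Q$ over $Q\cap P_0$ has order $4$, i.e., cannot be the central involution), via the observation that the central involution of $Q$ lies in $Q\cap P_0$; the paper simply asserts a generator of order $4$ outside $P_0$ without further comment. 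Your centralizer argument differs slightly as well: the paper computes $C_P(Q)=C_P(u)\cap C_P(v)=P_0\cap\langle v,ab\rangle$ and invokes both parts of Lemma \ref{CentralizerOutsideP0}, while you use $vuv^{-1}=u^{-1}$ directly to exclude the coset $v\langle ab\rangle$ from $C_P(Q)$; both are fine.

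One substantive point in your favour concerns the conjugacy assertion. You conjugate by an element $a^\gamma b^\delta$ of $P_0$, which fixes $u=(ab^{-1})^{2^{n-2}}$ and lets you adjust the $a$-exponent of the second generator freely; choosing $\gamma-\delta=i_2-i_1$ (e.g.\ $\gamma=j_1$, $\delta=j_2$) does carry $\langle u,a^{i_1}b^{j_1}t\rangle$ onto $\langle u,a^{i_2}b^{j_2}t\rangle$, using $i_1+j_1\equiv i_2+j_2\equiv 2^{n-1}$. The paper instead asserts that conjugation by $x=a^{j_2}b^{j_1}t$ works, but a direct computation gives ${}^x(a^{i_1}b^{j_1}t)=a^{j_2}b^{i_2}t$, which lies in $\langle u,a^{i_2}b^{j_2}t\rangle$ only when $j_2-i_2\equiv 0\ (\mathrm{mod}\ 2^{n-2})$. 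This condition can fail once $n\geq 4$ (e.g.\ $n=4$, $(i_2,j_2)=(1,7)$), so the paper's explicit conjugating element appears to be in error (apparently missing a further conjugation by $t$, i.e.\ the element $a^{j_1}b^{j_2}\in P_0$ should be used, which coincides with your choice). Your $P_0$-conjugation gives a clean and correct proof of the $P$-conjugacy statement.
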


\begin{proof}
Since $Q$ is a non-abelian subgroup of $P$, $Q$ has 
a generator outside $\langle a \rangle \times \langle b \rangle $. Let $a^ib^j t$
be one of generators of $Q$ of order $4$ for some integers $i, j$. 
The element $(a^ib^j t)^2=(a b)^{i+j}$ is an involution. Hence
we must have that $i+j\equiv2^{n-1} \text{ (mod } 2^n)$. As a consequence, 
we get that the element $(a b)^{2^{n-1}}$ has to
be the unique involution of $Q$. 
Assume that $a^kb^\ell t$ is another generator of $Q$ of order $4$ for some integers $k$ and $\ell$,
then by the same reason,
we must have $k+\ell\equiv2^{n-1} \text{ (mod} \ 2^n)$. 
Moreover, by the defining relations of $Q_8$, the element
\begin{equation}\label{abt}
 (a^ib^j t) (a^kb^\ell t) =a^{i+\ell} b^{j+k}
 \end{equation}
must also have order $4$, and its square must be equal to the unique involution of $Q$, 
which is equal to $(a b)^{2^{n-1}}$.
Hence, we get the following system of equations:
$$i+j\equiv2^{n-1} \text{ (mod} \ 2^n)$$
$$k+\ell \equiv2^{n-1} \text{ (mod} \ 2^n)$$
$$2(i+\ell )\equiv2^{n-1} \text{ (mod} \ 2^n)$$
$$2(j+k)\equiv2^{n-1} \text{ (mod} \ 2^n).$$
This system gives the following two compatible system of equations. Either the following system
$$i+j\equiv2^{n-1} \text{ (mod} \ 2^n)$$
$$k+\ell\equiv2^{n-1} \text{ (mod} \ 2^n)$$
$$i+\ell\equiv2^{n-2} \text{ (mod} \ 2^n)$$
$$j+k\equiv2^{n-1}+2^{n-2} \text{ (mod} \ 2^n)$$
or the following system of equations
$$i+j\equiv2^{n-1} \text{ (mod} \ 2^n)$$
$$k+\ell\equiv2^{n-1} \text{ (mod} \ 2^n)$$
$$i+\ell\equiv2^{n-1}+ 2^{n-2} \text{ (mod} \ 2^n)$$
$$j+k\equiv2^{n-2} \text{ (mod} \ 2^n)$$
must be satisfied.

If $i, j, k, \ell$ satisfy the first system of equations, then from (\ref{abt}), we see that
$a^{2^{n-2}} b^{3.2^{n-2}}=(ab^{-1})^{2^{n-2}}\in Q$.
If the second system of equations is satisfied by $i, j, k, \ell$, then similarly 
$a^{3. 2^{n-2}} b^{2^{n-2}}=(a^{-1}b)^{2^{n-2}}\in Q$.
Note that these elements are of order $4$ and 
they are inverses of each other and their squares are equal to the
unique involution of $Q$. So $Q=\langle(ab^{-1})^{2^{n-2}}, a^i b^j t \rangle$ as desired.
Furthermore, since $C_P((ab^{-1})^{2^{n-2}})=P_0$, we conclude that
$C_P(Q)=C_P(a^i b^j t) \,\cap\,  C_P((ab^{-1})^{2^{n-2}})= \langle ab \rangle=Z(P)$ by using
Lemma \ref{CentralizerOutsideP0}. 

Let $\langle(ab^{-1})^{2^{n-2}}, a^{i_1} b^{j_1} t \rangle$ and
$\langle(ab^{-1})^{2^{n-2}}, a^{i_2} b^{j_2} t \rangle$ be subgroups of $P$ 
which are isomorphic to $Q_8$ so that
$i_1 + j_1\equiv i_2 + j_2 \equiv 2^{n-1} \ (\normalfont{\mathrm{mod}} \ 2^n)$. 
Then setting $x=a^{j_2} b^{j_1} t$, it follows that conjugation by $x$ sends first subgroup 
to the second. Hence, the last assertion is established.
\end{proof}

From Lemma \ref{Q8}, now it is easy to see that in a wreathed $2$-group $P$, the subgroups which 
are isomorphic to $Q_8 * C_{2^n}$ are built as follows: take a subgroup $Y$ in $P$ which is isomorphic to 
$Q_8$, then since the centralizer of $Y$ is $Z(P)$, this subroup should be equal to $Y \, Z(P)$ (see Theorem 2.5.3 in \cite{Gor}). 
So again using Lemma \ref{Q8}, we deduce that these subgroups are all $P$-conjugate to each other. From now on, let us fix one of 
them and call it $P_1$.

\begin{Lemma}\label{AutP1}
Suppose that $\CF$ is a saturated fusion system on $P$ and 
${\mathrm{Out}}_{\CF}(P_1)\cong S_3$. 
Then any $\CF$-automorphism of $P_1$ centralizes $Z(P_1)=Z(P)$. In particular, $\CF$ contains 
all possible automorphisms of $P_1$ which centralize $Z(P_1)$.
\end{Lemma}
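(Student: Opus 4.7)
The plan is to reduce both claims to the single observation that saturation forces $\Aut_\CF(Z(P))=1$, and then exploit the equality $Z(P_1)=Z(P)$ to transfer this triviality to the centre of $P_1$. First I would verify $Z(P_1)=Z(P)$: since $P_1=Y\,Z(P)$ with $Y\cong Q_8$ and Lemma \ref{Q8} gives $C_P(Y)=Z(P)$, one has $Z(P)\subseteq Z(P_1)$ (because $Z(P)$ is abelian and centralizes $Y$) and conversely $Z(P_1)\subseteq C_P(Y)=Z(P)$. Next I would show $\Aut_\CF(Z(P))=1$: because $N_P(Z(P))=P$ has maximum possible order, $Z(P)$ is fully $\CF$-normalized, so Lemma \ref{Lin2.5} gives $\Aut_P(Z(P))\in\Syl_2(\Aut_\CF(Z(P)))$; since $\Aut_P(Z(P))=N_P(Z(P))/C_P(Z(P))=P/P=1$, $\Aut_\CF(Z(P))$ has odd order, while $\Aut_\CF(Z(P))\leq\Aut(C_{2^n})$ is a $2$-group, forcing triviality.

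For the first claim, any $\varphi\in\Aut_\CF(P_1)$ preserves the characteristic subgroup $Z(P_1)=Z(P)$, and the restriction $\varphi|_{Z(P_1)}$ lies in $\Aut_\CF(Z(P))=1$, so $\varphi$ centralizes $Z(P_1)$.

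For the second claim, set $A:=\{\alpha\in\Aut(P_1)\mid \alpha|_{Z(P_1)}=\mathrm{id}\}$. By the first claim $\Aut_\CF(P_1)\leq A$, so it suffices to check $|A|=|\Aut_\CF(P_1)|=|\Inn(P_1)|\cdot|\Out_\CF(P_1)|=4\cdot 6=24$. Since $P_1/Z(P_1)\cong C_2\times C_2$, the restriction map $A\to\Aut(P_1/Z(P_1))\cong S_3$ has kernel $\Hom(P_1/Z(P_1),Z(P_1))=\Hom(C_2\times C_2,\,C_{2^n})$ of order $4$, and the map is surjective because every $\sigma\in\Aut(Q_8)$ extends to an element of $A$ by acting trivially on the second factor of the central product $P_1=Q_8*Z(P)$, while $\Aut(Q_8)\twoheadrightarrow\Out(Q_8)\cong S_3$. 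Thus $|A|=24$, giving $\Aut_\CF(P_1)=A$.

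The main conceptual step is recognizing that $Z(P)$ is automatically fully $\CF$-normalized so that the Sylow axiom applies; this is essentially trivial for central subgroups but deserves to be made explicit. The other slightly fiddly piece is the order count for $A$, which hinges on the standard fact that every outer automorphism of $Q_8$ lifts to an automorphism of the central product $Q_8*C_{2^n}$ that fixes the $C_{2^n}$ factor pointwise.
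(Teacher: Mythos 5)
Your proof is correct but follows a genuinely different route, and in one respect it is more complete than the paper's own. For the first claim, the paper argues inside $\Aut_\CF(P_1)$: it lifts an order-$3$ element of $\Out_\CF(P_1)$ to a genuine order-$3$ automorphism of $P_1$ (exploiting that $\Inn(P_1)\cong C_2\times C_2$ has exponent $2$), observes that any order-$3$ automorphism acts trivially on the characteristic cyclic $2$-subgroup $Z(P_1)$ since $\Aut(C_{2^n})$ is a $2$-group, and then decomposes a general $\beta\in\Aut_\CF(P_1)$ into such order-$3$ pieces, pieces coming from $N_P(P_1)$, and inner automorphisms, each of which centralizes $Z(P)$. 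You instead push the whole argument down to $Z(P)$ itself: since $Z(P)\unlhd P$ it is trivially fully $\CF$-normalized, so Lemma~\ref{Lin2.5} forces $\Aut_\CF(Z(P))$ to have odd order, hence to be trivial inside the $2$-group $\Aut(C_{2^n})$; restriction of any $\CF$-automorphism of $P_1$ to the characteristic subgroup $Z(P_1)=Z(P)$ then lands in this trivial group. This is cleaner and avoids the ad hoc lifting. More importantly, the paper's proof in fact stops after establishing the first assertion and never actually justifies the ``in particular'' clause. Your counting argument, showing $|A|=24=|\Aut_\CF(P_1)|$ via the exact sequence $1\to\Hom(P_1/Z(P_1),Z(P_1))\to A\to\Aut(P_1/Z(P_1))\cong S_3\to 1$ (with surjectivity supplied by extending automorphisms of the $Q_8$ factor in the central product), supplies a genuine proof of that second claim and closes a gap the paper leaves implicit.
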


\begin{proof}
Recall that $\Out_{\CF}(P_1)=\Aut_{\CF}(P_1)/ \Inn(P_1)$ where $\Inn(P_1)=P_1/Z(P_1)\cong C_2 \times C_2$. 
It is easy to see that any inner automorphism of $P_1$ centralizes $Z(P_1)$. Consider an 
outer automorphism of order $3$.
Note that using this outer automorphism, we can construct an actual automorphism $\alpha$
of $P_1$ of order $3$ since $\Inn(P_1)\cong C_2 \times C_2$. 
Then, since $Z(P)$ is a cyclic subgroup of $P_1$ which is characteristic, 
$\alpha$ centralizes the center of $P$ (see third paragraph in the proof of \cite[Proposition 2]{ABG}).  

Let $\beta$ be an arbitrary $\CF$-automorphism of $P_1$. Then since $\Out_{\CF}(P_1) \cong S_3$,
$\beta$ corresponds to an outer automorphism of order at most $2$ or it corresponds to 
an outer automorphism of order $3$. If $\beta$ corresponds to an outer automorphism of order $3$, 
then $\beta$ is a composition of an $\CF$-automorphism of order $3$ and an inner automorphism of $P_1$.
 Hence, by the first paragraph of the proof, $\beta$ centralizes $Z(P)$. If it corresponds to an outer automorphism 
of order at most $2$, it means that $\beta$ is a composition of an $\CF$-automorphism of order $3$ 
followed by conjugation induced by an element of $N_P(P_1)$ and clearly this also fixes all elements in the center $Z(P)$. 
\end{proof}

\begin{Lemma}\label{HomocyclicSubgroup}
Let $P$ be a wreathed $2$-group $C_{2^n}\wr C_2$ for some $n\geq 2$.
Assume that $Q\leq P$ and $Q\cong C_{2^m}\times C_{2^m}$ for some $m$
with $1\leq m\leq n$. 
\begin{enumerate}
\item[\rm  (i)] If $m\geq 2$, then $Q\leq P_0$ and
$Q=\langle a^{2^{n-m}}\rangle\times\langle b^{2^{n-m}}\rangle$.
\item[\rm (ii)] If $Q\cong C_2\times C_2$ and $Q\leq P_0$, then
$Q=\langle a^{2^{n-1}}\rangle\times\langle b^{2^{n-1}}\rangle$.
\item[\rm (iii)] If $Q\cong C_2\times C_2$ and $Q\,{\not\leq}\,P_0$, 
then 
we can assume that 
$Q=\langle (ab)^{2^{n-1}}\rangle\times\langle t\rangle$. Furthermore, $Q \leq_P P_1$. 
 \end{enumerate}

\end{Lemma}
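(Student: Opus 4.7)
The plan for parts (i) and (ii) rests on a single observation about $P_0$: the set of elements of $P_0\cong C_{2^n}\times C_{2^n}$ of order dividing $2^m$ is precisely the characteristic subgroup $\Omega_m(P_0):=\langle a^{2^{n-m}}\rangle\times\langle b^{2^{n-m}}\rangle$, and this has order exactly $2^{2m}$. Thus any $Q\cong C_{2^m}\times C_{2^m}$ contained in $P_0$ must coincide with $\Omega_m(P_0)$ for cardinality reasons, which immediately gives (ii) (where $Q\leq P_0$ is assumed) and the conclusion of (i) once we prove $Q\leq P_0$. For that remaining step in (i), the plan is to argue by contradiction: if $x\in Q\setminus P_0$, then since $Q$ is abelian, $Q\leq C_P(x)$, so by Lemma~\ref{CentralizerOutsideP0}, $Q\cap P_0\leq C_P(x)\cap P_0=Z(P)$ is cyclic. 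But $|P:P_0|=2$ forces $|Q\cap P_0|\geq 2^{2m-1}$, while any cyclic subgroup of $Q\cong C_{2^m}\times C_{2^m}$ has order at most $2^m$. Hence $2^{2m-1}\leq 2^m$, forcing $m\leq 1$, contradicting $m\geq 2$.

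For part (iii), the plan is first to pin down $Q$ up to $P$-conjugacy. Since $Q\not\leq P_0$ and every non-identity element of $Q$ is an involution, $Q$ contains some involution $x=a^ib^jt$ outside $P_0$. The relation $x^2=(ab)^{i+j}=1$ forces $j\equiv -i\pmod{2^n}$, and a direct conjugation computation (using $ta=bt$) gives $b^i x b^{-i}=t$, so $x$ is $P$-conjugate to $t$. Replacing $Q$ by this $P$-conjugate, we may assume $t\in Q$. By Lemma~\ref{CentralizerOutsideP0}, $C_P(t)=\langle t\rangle\times Z(P)\cong C_2\times C_{2^n}$, whose three non-trivial involutions are $t$, $(ab)^{2^{n-1}}$ and $t(ab)^{2^{n-1}}$. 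Since $Q\cong C_2\times C_2$ contains $t$ and is not cyclic, the only possibility is $Q=\langle t\rangle\times\langle (ab)^{2^{n-1}}\rangle$, as claimed.

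Finally, for $Q\leq_P P_1$, the plan is to exhibit one specific $P$-conjugate of $P_1$ containing the already-normalized $Q$. Take $Y:=\langle (ab^{-1})^{2^{n-2}},\, a^{2^{n-2}}b^{2^{n-2}}t\rangle$; since $2^{n-2}+2^{n-2}=2^{n-1}$, Lemma~\ref{Q8} identifies $Y\cong Q_8$, and $P_1':=Y\,Z(P)$ is of the form $Q_8*C_{2^n}$. Then the identity $t=(a^{2^{n-2}}b^{2^{n-2}}t)\cdot(ab)^{-2^{n-2}}$ shows $t\in P_1'$, and $(ab)^{2^{n-1}}\in Z(P)\leq P_1'$, giving $Q\leq P_1'$. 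Since all subgroups of the form $Q_8*C_{2^n}$ in $P$ are $P$-conjugate by the remark following Lemma~\ref{Q8}, we conclude $Q\leq_P P_1$. The main technical obstacle is the containment argument in (i): one must carefully balance a lower bound for $|Q\cap P_0|$ coming from $|P:P_0|=2$ against the upper bound on cyclic subgroups of a rank-two homocyclic group; everything else reduces to direct computations in the wreath product using the structural data already recorded in Lemmas~\ref{CentralizerOutsideP0} and~\ref{Q8}.
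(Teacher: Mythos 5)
Your proposal is correct throughout, and in two places it takes a genuinely different (and arguably slicker) route than the paper. For part (i), the paper establishes $Q\leq P_0$ by a two-case analysis on whether the second generator $y$ of $Q=\langle x\rangle\times\langle y\rangle$ lies in $P_0$ or not, in each case deriving a contradiction from the structure of cyclic subgroups of $Z(P)$. You instead observe in one stroke that $Q\cap P_0$ has index $2$ in $Q$ (since $|P:P_0|=2$ and $Q\not\leq P_0$), hence order $2^{2m-1}$, while it must be contained in the cyclic group $Z(P)=C_P(x)\cap P_0$; since the maximal cyclic subgroup of $C_{2^m}\times C_{2^m}$ has order $2^m$, the inequality $2^{2m-1}\leq 2^m$ forces $m\leq 1$. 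This is a cleaner single cardinality argument, though it reuses the same key structural input (Lemma~\ref{CentralizerOutsideP0}). For part (iii), the paper normalizes $Q$ to $\langle(ab)^{2^{n-1}}\rangle\times\langle t\rangle$ by citing \cite[Lemma 2(iv)]{ABG}, whereas you replace that citation with a short self-contained computation: any involution $a^ib^jt$ outside $P_0$ must have $j\equiv -i$, and conjugating by $b^i$ sends it to $t$; then $C_P(t)\cong C_2\times C_{2^n}$ has exactly three involutions, pinning down $Q$. The containment $Q\leq_P P_1$ is proved identically in both (your $a^{2^{n-2}}b^{2^{n-2}}t$ is the same element as the paper's $(ab)^{2^{n-2}}t$). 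In short: same structural toolkit (Lemmas~\ref{CentralizerOutsideP0} and~\ref{Q8}), but a more unified cardinality argument for (i) and an explicit computation replacing an external citation in (iii).
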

\begin{proof}

(i) {\sf Suppose that $Q\,{\not\leq}\,P_0$.}
We can set $Q:=\langle x\rangle\times\langle y\rangle$ for some $x,y\in Q$ with $|x|=|y|=2^m$. 
We can assume that $x\,{\not\in}\,P_0$. Hence for some integers $r, s$ we may write
$x:=a^rb^st$. Obviously, $x^2=(ab)^{r+s}\in Z(P)$.

Assume first that $y\,{\not\in}\,P_0$. 
Then we can set $y=a^ib^jt$ for integers $i, j$, so that $y^2=(ab)^{i+j}\in Z(P)$.
Since $\langle x^2\rangle\cong\langle y^2\rangle\cong C_{2^{m-1}}\,{\not=}\,1$,
since $\langle x^2\rangle\leq Z(P)\geq \langle y^2\rangle$, and since $Z(P)$ is cyclic,
we must have $\langle x^2\rangle=\langle y^2\rangle$, a contradiction
since $Q=\langle x\rangle\times\langle y\rangle$.

Hence $y \in P_0$. Obviously $y\in C_P(x)$ since $Q=\langle x\rangle\times\langle y\rangle$,
so that $y\in C_P(x)\cap P_0$, and hence by Lemma  \ref{CentralizerOutsideP0}
$$y\in Z(P).$$
Thus, by noting that $\langle y\rangle\leq Z(P)\geq\langle x^2\rangle$, that $Z(P)$ is cyclic
and further that $|y|>|x^2|$, we must have that $\langle x^2\rangle <\langle y\rangle$,
which implies that $\langle x^2\rangle\cap\langle y\rangle=\langle x^2\rangle\cong C_{2^{m-1}}\,{\not=}\,1$
since $m\geq 2$. But this is {\sf a contradiction} since 
$\langle x^2\rangle\cap\langle y\rangle\leq\langle x\rangle\cap\langle y\rangle=1$.

Thus we have $Q\leq P_0$.
{
Next, notice
$Q = \{ \text{all the elements of order }2^m\text{ in }P_0\}$. Hence
$$Q=\{ a^ib^j\,|\, \ \ 2^{n-m}\,|\,i \ \text{ and } \    2^{n-m}\,|\,j  \}.$$
So, the second assertion holds.}

(ii) Since $Q$ has three different involutions in $P_0$, and $P_0$ has precisely three involutions
$a^{2^{n-1}}, b^{2^{n-1}}, a^{2^{n-1}}{\cdot}b^{2^{n-1}}$, the assertion follows.

(iii) From \cite[Lemma 2 (iv)]{ABG}, we can conclude that any such $Q$ is $P$-conjugate to 
$\langle (ab)^{2^{n-1}}\rangle\times\langle t\rangle$. Note that 
$\langle (ab)^{2^{n-1}}\rangle\times\langle t\rangle \leq \langle (ab^{-1})^{2^{n-2}}, (ab)^{2^{n-2}} t \rangle \, Z(P) =_P P_1$, 
so it follows that $Q \leq_P P_1$ in this case.
\end{proof}

\begin{Lemma}\label{essential2}
Let $G$ be a finite group with a $2$-subgroup 
$P\cong C_{2^n}\wr C_2$ for some $n\geq 2$. Assume that 
$\CF_P(G)$ is saturated,
and $P_1$
is $\CF_P(G)$-essential. Further, let $Q\leq P_1$ such that $Q \cong Q_8 * C_{2^m}$
where $0 \leq m \leq n-1$. Then, we have that $N_G(Q) = N_G(P_1) \, C_G(Q)$. 
Moreover, we have that ${\mathrm{Out}}_{\CF}(Q) \cong S_3$. 
On the other hand, if $Q \leq P_1$ 
is an $\CF_P(G)$-normalized subgroup such that $Q \cong C_2 \times C_2$, 
we have that ${\mathrm{Out}}_{\CF}(Q) \cong C_2$.
\end{Lemma}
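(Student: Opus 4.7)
The plan is to mirror the proof of Lemma \ref{essential}, with $P_1$ in place of $P_0$. Fix a $Q_8$-subgroup $Y$ of $P_1$ so that $P_1 = Y\cdot Z(P)$; for $Q\cong Q_8 * C_{2^m}$ with $0\leq m\leq n-1$, write $Q = Y\cdot Z(Q)$, where $Z(Q)$ is the unique subgroup of order $2^m$ of the cyclic group $Z(P) = \langle ab\rangle$ (and $Z(Q) = Z(Y)$ when $m=0$).

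The first step is to show that every $\CF$-automorphism of $P_1$ preserves $Q$. By Lemma \ref{AutP1}, any $\phi\in\Aut_{\CF}(P_1)$ centralizes $Z(P)$ pointwise, so $\phi$ stabilizes $Z(Q)$. To see $\phi(Y) = Y$, I would decompose $\phi$ as a composition of a lift (to $P_1$) of its induced action on $P_1/Z(P_1)\cong C_2\times C_2$ that preserves $Y$ (such a lift exists since every automorphism of $Y/Z(Y)$ is induced from one of $Y\cong Q_8$) and a central automorphism; the latter sends each $y\in Y$ of order $4$ to either $y$ or $y\cdot y^2 = y^{-1}$, both in $Y$. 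Hence restriction yields a map $\Aut_{\CF}(P_1)\to\Aut_{\CF}(Q)$ which, after passage to outer automorphism groups (using that the identity on $Q$ combined with the identity on $Z(P)$ forces the identity on $P_1 = Q\cdot Z(P)$), is injective. Thus $S_3\cong\Out_{\CF}(P_1)\hookrightarrow\Out_{\CF}(Q)$.

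Next I would establish the equality $N_G(Q) = N_G(P_1)\,C_G(Q)$, from which the isomorphism $\Out_{\CF}(Q)\cong\Out_{\CF}(P_1)\cong S_3$ follows by the injection above becoming a bijection. The inclusion $\supseteq$ is immediate from the first step, since $c_g\in\Aut_{\CF}(P_1)$ preserves $Q$ for any $g\in N_G(P_1)$. For the reverse inclusion, given $g\in N_G(Q)$, I would invoke the extension axiom of saturated fusion systems, passing to a fully $\CF$-centralized conjugate of $Q$: then $c_g|_Q$ extends to an $\CF$-morphism on $N_{c_g}$, and the main obstacle will be verifying $P_1\leq N_{c_g}$. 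This reduces to checking that $c_g|_Q$ normalizes $\Aut_{P_1}(Q) = P_1/C_{P_1}(Q)$ inside $\Aut_{\CF}(Q)$, which can be done by a direct inspection inside the $24$-element group of automorphisms of $Q$ centralizing $Z(Q)$, where $\Aut_{P_1}(Q)$ is a normal Sylow $2$-subgroup.

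For the final assertion on Klein fours $Q\leq P_1$ fully $\CF$-normalized: here $\Inn(Q) = 1$, so $\Out_{\CF}(Q) = \Aut_{\CF}(Q)\leq\Aut(Q)\cong S_3$. Using Lemma \ref{HomocyclicSubgroup}(ii),(iii) together with the $\Aut_{\CF}(P_1)$-action from the first step (which maps $V_0 := \langle a^{2^{n-1}}, b^{2^{n-1}}\rangle$ to Klein fours not contained in $P_0$), I would argue that $V_0$ is the fully $\CF$-normalized representative in its $\CF$-conjugacy class, so $Q = V_0$. The direct computation $N_P(V_0)/C_P(V_0) = P/P_0\cong C_2$ together with saturation yields $C_2\leq\Aut_{\CF}(Q)$. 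To exclude $\Aut_{\CF}(Q)\cong S_3$, I would verify that every order-$3$ element of $\Aut_{\CF}(P_1)$ carries $V_0$ out of $P_0$ and hence does not restrict to an automorphism of $V_0$; combined with Alperin's fusion theorem, this rules out any order-$3$ $\CF$-automorphism of $V_0$.
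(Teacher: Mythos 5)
Your approach differs from the paper's in both halves of the lemma, and there are genuine gaps in each.

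\textbf{The $Q_8 * C_{2^m}$ case.} Your first step (decomposing any $\phi\in\Aut_\CF(P_1)$ into a lift preserving $Y$ composed with a central automorphism, hence $\phi(Q)=Q$) is a nice alternative to the paper's argument; the paper instead proves that any $\CF$-automorphism of $Q$ centralizes $Z(Q)$, identifies it with a permutation of $Q/Z(Q)\cong V_4$, and builds the extension $\hat\varphi$ on $P_1$ by hand using $P_1/Z(P_1)=Q/Z(Q)$. Where your version strays is in the reverse inclusion $N_G(Q)\leq N_G(P_1)C_G(Q)$. First, $\Aut_{P_1}(Q)=P_1/C_{P_1}(Q)=P_1/Z(P)$ has order $4$, not $8$, so it is not a Sylow $2$-subgroup of the $24$-element group you describe; what makes the extension axiom applicable is that $\Aut_{P_1}(Q)=\Inn(Q)$, which is normal in all of $\Aut(Q)$ (no Sylow count or explicit inspection needed). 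Second, the extension axiom produces an $\CF$-morphism $\hat\varphi\colon N_{c_g}\to P$, not a priori an automorphism of $P_1$; you must still observe that $\hat\varphi(P_1)=P_1$, which follows because $P_1=Q\,C_P(Q)$ and $\hat\varphi(C_P(Q))\leq C_P(\hat\varphi(Q))=C_P(Q)$. The paper sidesteps both issues by constructing $\hat\varphi$ directly on $P_1$.

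\textbf{The Klein four case.} Here you diverge from the paper in a way that does not recover the stated conclusion. The paper takes $Q=\langle(ab)^{2^{n-1}}\rangle\times\langle t\rangle$ via Lemma~\ref{HomocyclicSubgroup}(iii), i.e.\ the case $Q\not\leq P_0$, and applies the extension result \cite[Lemma 5.4]{L} to $Q\,C_P(Q)\cong C_{2^n}\times C_2$. You instead set $Q=V_0=\langle a^{2^{n-1}},b^{2^{n-1}}\rangle$ and try to exclude order-$3$ automorphisms by observing that order-$3$ elements of $\Aut_\CF(P_1)$ move $V_0$ out of $P_0$. But that observation does not close the argument: Alperin's fusion theorem would still allow an order-$3$ $\CF$-automorphism of $V_0$ to arise from $\Aut_\CF(P_0)$. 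The hypotheses only assume $P_1$ is $\CF$-essential; $P_0$ may be essential as well, in which case the order-$3$ automorphism in $\Aut_\CF(P_0)$ (from Lemma~\ref{essential}) \emph{does} restrict to $V_0$ and gives $\Aut_\CF(V_0)\cong S_3$, so the conclusion $\Out_\CF(Q)\cong C_2$ fails for $Q=V_0$. Your own preliminary observation (that $\Aut_\CF(P_1)$ sends $V_0$ to the other Klein fours of $P_1$, which lie outside $P_0$) already shows $V_0$ and the paper's $Q$ are $\CF$-conjugate when $P_1$ is essential, so one must be careful about which representative one takes and which extra hypotheses on $P_0$ are in force; your proposed argument does not handle this, and it does not reproduce the paper's route through $\Aut(C_{2^n}\times C_2)$ being a $2$-group.
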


\begin{proof}
Suppose that $Q \cong Q_8 * C_{2^m}$ for $1 \leq m \leq {{}n}$.
Then  by Lemma \ref{Q8} and  \cite[Theorem 2.5.3]{Gor}),
\begin{align*}
Q&=Y\,K=Y*K \text{ where}
\\
Y=\langle(ab^{-1})^{2^{n-2}}, a^i b^j t \rangle &\cong Q_8 \text{ and }
K :=\langle (ab)^{2^{n-m}}\rangle\leq Z(P)\text{ and }K\cong C_{2^m}
\\
\text{for some }&i, \ j \text{ with }i+j\equiv 2^{n-1} (\text{ mod } 2^n). 
\end{align*}

Set $\CF:=\CF_P(G)$ and let us choose $P_1=Y \, Z(P)$ so that $Q \leq P_1$. By the definition of $Y$, we have that 
\begin{align}
\label{Y}
 Y=\{&1, (ab^{-1})^{2^{n-2}}, (ab)^{2^{n-1}}, (a^{-1}b)^{2^{n-2}}, 
a^ib^jt, (ab^{-1})^{2^{n-2}} a^ib^j t, (ab)^{2^{n-1}} a^ib^j t, (a^{-1}b)^{2^{n-2}} a^ib^j t\}.
 \end{align}
From Lemma \ref{Q8}, we have that
$Q \, C_P(Q)=Y \, Z(P)$, so by looking at the elements of $Y$, we deduce 
that any element  of the form $a^k b^l t$ in $Q \, C_P(Q)$ satisfy either
\begin{align}
\label{kl}
k-l=i-j \text{ or } k-l=i-j+2^{n-1}.
\end{align}
{\sf Next we claim that} $N_P(Q)/Q\,C_P(Q)\,{\not=}\,1$. 
For this, it suffices to prove that there exists an
element $x$ in $N_P(Q) - Q\,C_P(Q)$.
To see this, let $r$ and $s$ be integers satisfying
$$
r-s\equiv i-j+2^{n-2} \ (\text{mod } 2^n).
$$
and let $x=a^rb^st\in P$. 
Noting that
$$
x(ab^{-1})^{2^{n-2}}x^{-1}=(a^rb^st) (ab^{-1})^{2^{n-2}}(a^{-s}b^{-r}t)=(a^{-1}b)^{2^{n-2}}
$$
$$
x (a^ib^jt)x^{-1}=(a^rb^st) (a^ib^jt)(a^{-s}b^{-r}t)=(ab^{-1})^{2^{n-2}} (a^ib^jt).
$$
So, $x\in N_P(Y)$ which implies that $x\in N_P(Q)$. Further, we know by (\ref{kl}) that $x\,{\not\in}Q\,C_P(Q)$.
Hence $x\ Q\,C_P(Q)$ is a non-trivial element 
in $N_P(Q)/Q\,C_P(Q)$, that is $N_P(Q)/Q\,C_P(Q)\,{\not=}\,1$, {\sf thus the claim is proved.}  
Now, we have that $P_1=Y\,Z(P) \geq Y\,K=Q\cong Q_8*C_{2^m}$. First, note that $N_P(Q)\leq N_P(P_1)$.
Obviously, $C_P(P_1)\leq C_P(Y)$ and $C_P(Y)=Z(P)$ by Lemma \ref{Q8} (don't confuse that
$Y$ here is $Q$ in Lemma \ref{Q8}). So that $C_P(P_1)\leq Z(P)\leq P_1$.
Further, by \cite[Theorem 1.3(2)]{CG}, ${\mathrm{Aut}}(P_1)$ is not a $2$-group. Thus,
we can apply \cite[Lemma 3(ii), p.10]{ABG} to the group $P_1\leq P$, namely we have that
$$  N_P(P_1)/P_1 \cong C_2.$$
It is easy to know that $Q\unlhd P_1$, so that $P_1\leq N_P(Q)$.
Further $P_1=Y\,Z(P)=Q\,C_P(Q)=Q\,C_P(Y)$, so that $P_1 \unlhd N_P(Q)$. Thus, 
$$ 
1\,{\not=}\,N_P(Q)/Q\,C_P(Q)=N_P(Q)/P_1 \leq N_P(P_1)/P_1 \cong C_2.
$$
Hence $N_P(Q)/Q\,C_P(Q)\cong C_2$.

On the other hand, since $P_1$ is $\CF$-essential, it follows from 
\cite[Proposition 2]{ABG} that  
$N_G(P_1)/P_1 \, C_G(P_1)\cong\Out_{\CF}(P_1) \cong S_3$. 
By Lemma \ref{AutP1}, any $\CF$-automorphism of $P_1$ centralizes $Z(P)$. 
Take an automorphism $\alpha$ of $P_1$ of order $3$.
Then $\alpha$ 
restricts to an automorphism of $Q$, since $P_1/Z(P_1)=Y/Z(Y)=Q/Z(Q)$. So, we get that 
\begin{equation}\label{S33}    S_3 \leq \Out_{\CF}(Q)\cong N_G(Q)/Q \,C_G(Q).
\end{equation}
Note that all subgroups which are isomorphic to $Q$ is $P$-conjugate to $Q$, so $Q$ is 
a fully $\CF$-normalized subgroup of $P$. So it follows 
from Lemma \ref{Lin2.5} that
$N_P(Q)/Q \, C_P(Q) \in{\mathrm{Syl}}_2(N_G(Q)/Q \,C_G(Q))$.
That is,
$|N_G(Q)/Q\,C_G(Q)|_2=2$ since we have known that $|N_P(Q)/Q\;C_P(Q)|=2$.
Obviously we can consider $N_G(Q)/Q\,C_G(Q) \leq{\mathrm{Out}}(Q)$.
It is well-known that ${\mathrm{Out}}(Q)$ is a $\{2,3\}$-group and that $|{\mathrm{Aut}}(Q)|_{3}=3$
(see e.g. \cite[Theorem 6.2]{CG}).
Hence by (\ref{S33}),
$$ N_G(Q)/Q\,C_G(Q)\cong {\mathrm{Out}}_\CF(Q) \cong S_3.$$
So the second assertion is established for $Q \cong Q_8 * C_{2^m}$. 

To show that $N_G(Q)=N_G(P_1)\, C_G(Q)$, we first claim that any element
in $\Aut_{\CF}(Q)$ can be induced to an element in $\Aut_{\CF}(P_1)$. 
Recall that $Q=Y \, K$ where $Y\cong Q_8$ and $K\leq Z(P)$. Notice that $Z(Q)=K \leq Z(P)$. 
We first claim that any $\CF$-automorphism of $Q$ centralizes $Z(Q)$. It is easy to see that 
any automorphism in $\Aut_P(Q)$ centralizes $Z(Q)$. So let us assume that $\alpha \in \Aut_{\CF}(Q)$ 
has order which is not equal to a power of $2$. Then the image of $\alpha$ in $\Out_{\CF}(Q)$ has order 
$3$, or equivalently $\alpha^3 \in \Inn(Q)\cong C_2 \times C_2$. So either $\alpha^3$ is identity or $\alpha^3$ 
is an involution. In the former case, $\alpha$ itself has order $3$ and $Z(Q)$ is a cyclic characteristic subgroup 
of $Q$, so $\alpha$ centralizes $Z(Q)$ in this case. Now assume that $\alpha^3$ is an involution, then $\alpha^2$ 
is an $\CF$-automorphism of $Q$ which has order $3$. By the same argument as in the previous case, 
we reach that $\alpha^2$ centralizes $Z(Q)$. Note also that since $\alpha^3$ is an inner automorphism 
of $Q$, it also centralizes $Z(Q)$. As a result, $\alpha$ centralizes $Z(Q)$. Hence, any $\CF$-automorphism 
of $Q$ fixes all elements of $Z(Q)$ and so corresponds to a permutation of the three non-trivial elements of $Q/Z(Q)$. 
Now, let $\varphi \in \Aut_{\CF}(Q)$ be an arbitrary automorphism, let also $\sigma_{\varphi}$ denote the 
permutation of $Q/Z(Q)$ which $\varphi$ corresponds. We will construct $\hat{\varphi}: P_1 \to P_1$ 
such that $\hat{\varphi}$ centralizes the center of $P_1$ and $\hat{\varphi}$ does the same move on 
$P_1/Z(P_1)$ as $\sigma_{\varphi}$ does to $Q/Z(Q)$, since $P_1/Z(P_1) = Y/Z(Y)=Q/Z(Q)$. Then 
from Lemma \ref{AutP1}, we deduce that $\hat{\varphi} \in \Aut_{\CF}(P_1)$ and it follows that 
$\hat{\varphi} |_Q=\varphi$.
Therefore, every automorphism in $\Aut_{\CF}(Q)$ extends to an automorphism in 
$\Aut_{\CF}(P_1) $. Consequently, if $g \in N_G(Q)$ then there is an $h \in N_G(P_1)$ 
such that $c_h {\vert}_{Q} =c_g$ which implies that $h^{-1} g \in C_G(Q)$. So we have that 
$g \in N_G(P_1)\,C_G(Q)$. 
Conversely, let $g \in N_G(P_1)$, then $c_g:P_1 \to P_1$ is in $\Aut_{\CF}(P_1)$. From 
Lemma \ref{AutP1}, it follows that $c_g$ centralizes center of $P_1$ and so it corresponds to a permutation 
of $P_1/Z(P_1)$. Since $P_1/Z(P_1) = Y/Z(Y)=Q/Z(Q)$, $c_g$ corresponds to the same permutation 
of $Q / Z(Q)$ and it fixes all elements in $Z(Q)$. So $c_g$ is an $\CF$-automorphism of $Q$ since restriction 
of all homomorphisms in $\CF$ lies in $\CF$. In other words $c_g \in N_G(Q)$. 

Suppose that $Q \cong C_2 \times C_2$, then by Lemma \ref{HomocyclicSubgroup}(iii), 
we have that $Q=\langle (a b)^{2^{n-1}} \rangle \times \langle t \rangle$. In this case, 
$Q \, C_P(Q)=Z(P) \times \langle t \rangle$ and 
$$N_P(Q)=\langle (ab^{-1})^{2^{n-2}}, (ab)^{2^{n-2}} t\rangle \, Z(P)=_P P_1,$$
so that $N_P(Q)/Q \,C_P(Q) \cong C_2$. We claim that $\Aut_{\CF}(Q)$ does not include 
an automorphism of order $3$. Arguing by contradiction, suppose that $\varphi \in \Aut_{\CF}(Q)$ 
is an automorphism of order $3$, which would imply that $\Aut_{\CF}(Q)\cong S_3$. Then 
$O_2(\Aut_{\CF}(Q))$ is trivial, so since $Q$ is assumed to be fully $\CF$-normalized, 
Lemma 5.4 of \cite{L} implies that $\varphi$ extends to 
some $\hat{\varphi} \in \Aut_{\CF}(Q \, C_P(Q))$. However, from Corollary 2.3(4) of \cite{CG}, it 
follows that $\Aut(Q \, C_P(Q)) \cong \Aut(C_{2^n} \times C_2)$ is a $2$-group since $n \geq 2$. 
So we arrive to a contradiction and we have that $\Aut_{\CF}(Q)=\Out_{\CF}(Q) \cong C_2$.
\end{proof}

\begin{Lemma}\label{list}
Let $G$ be a finite group with a wreathed $2$-subgroup $P:=C_{2^n}\wr C_2$
for $n\geq 2$. 
Suppose that $\CF_P(G)$ is saturated. 
Assume that $Q\leq P$ is an $\CF_P(G)$-normalized subgroup such that $N_G(Q)/Q\,C_G(Q)$ is not a $2$-group.
Then, $Q$ is equal to one of the groups in the following list:
\begin{enumerate}
\item[$\bullet$] $\langle a^{2^m} \rangle \times \langle b^{2^m}\rangle\cong C_{2^{n-m}} \times C_{2^{n-m}} $ for some $0 \leq m \leq n-1$, so that $Q \leq P_0$;
\item[$\bullet$]  $Y \, K \cong Q_8 * C_{2^m}$ for some $1 \leq m \leq n$ where $Y \cong Q_8$ and $K \leq Z(P)$ and in this case $Q \leq_P P_1$ (note that $Q_8 * C_{2} \cong Q_8$).
\end{enumerate}

\end{Lemma}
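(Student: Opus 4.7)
The plan is to extract an order-$3$ automorphism $\sigma \in \Aut_\CF(Q)$ acting non-trivially on $Q/\Phi(Q)$ and then case-analyze on whether $Q \le P_0$ and whether $Q$ is abelian. Since $\Out_\CF(Q) = N_G(Q)/QC_G(Q)$ is not a $2$-group and $\Inn(Q)$ is a $2$-group, $\Aut_\CF(Q)$ contains an element of odd prime order; this prime must be $3$ because $\Aut(Q)$ is a $\{2,3\}$-group for $Q \le P$ (as used in Lemmas \ref{essential} and \ref{essential2}). Burnside's basis theorem forces $\sigma$ to act non-trivially on $Q/\Phi(Q)$, so $d(Q) \ge 2$; and since $\Aut$ of a cyclic $2$-group is a $2$-group, $\sigma$ fixes every cyclic characteristic $2$-subgroup of $Q$ pointwise, including $Z(Q)$ and $[Q,Q]$.

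If $Q \le P_0$: then $Q$ is abelian of rank $\le 2$, and $3 \mid |\Aut(Q)|$ forces $Q \cong C_{2^a} \times C_{2^a}$ (since $\Aut(C_{2^a} \times C_{2^b})$ with $a \ne b$ is a $2$-group by \cite[Corollary 2.3(4)]{CG}, and $\Aut$ of a cyclic $2$-group is a $2$-group). Lemma \ref{HomocyclicSubgroup}(i),(ii) then yields $Q = \langle a^{2^m}\rangle \times \langle b^{2^m}\rangle$ with $m = n-a \in \{0,\ldots,n-1\}$, the first bullet. If $Q \not\le P_0$ is abelian: the homocyclic requirement combined with Lemma \ref{HomocyclicSubgroup}(i) (which forces $C_{2^a} \times C_{2^a}$ with $a \ge 2$ into $P_0$) leaves only $Q \cong C_2 \times C_2$; but then the last assertion of Lemma \ref{essential2} -- whose proof uses only saturation of $\CF$ and full normalization of $Q$, not essentialness of $P_1$ -- gives $\Out_\CF(Q) \cong C_2$, contradicting the hypothesis. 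So this subcase is vacuous.

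If $Q$ is non-abelian (hence $Q \not\le P_0$): the commutator computation $[a^i b^j t, a^k b^l] = (ab^{-1})^{k-l}$ from the proof of Lemma \ref{essential2} shows $[Q,Q] \le \langle ab^{-1}\rangle$ is cyclic, and by Lemma \ref{CentralizerOutsideP0} an element $y \in Q \setminus P_0$ has $C_{P_0}(y) = \langle ab\rangle$, forcing $Z(Q) \le \langle ab\rangle$ to be cyclic with $Z(Q) \cap [Q,Q] \le \langle (ab)^{2^{n-1}}\rangle$ of order $\le 2$. Because $\sigma$ fixes $Z(Q)$ and $[Q,Q]$ pointwise, coprime-action arguments force $\sigma$ to act non-trivially on $Q/Z(Q)$ and $Q^{ab}$; careful size-tracking under these cyclic constraints, combined with $\Aut$ of non-homocyclic abelian $2$-groups being a $2$-group, yields $|[Q,Q]| = 2$ and $Q/Z(Q) \cong C_2 \times C_2$, i.e.\ $Q$ has nilpotency class $2$. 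A class-$2$ $2$-group with cyclic center of order $2^m$ and $Q/Z(Q) \cong C_2 \times C_2$ is $\cong Q_8 * C_{2^m}$ (the alternative $D_8 * C_{2^m}$ coincides with $Q_8 * C_{2^m}$ for $m \ge 2$, and reduces to $D_8$ for $m = 1$, excluded since $\Aut(D_8)$ is a $2$-group; other candidates such as the modular-maximal-cyclic group likewise have $\Aut$ a $2$-group). Lemma \ref{Q8} then places $Q \le_P P_1$, giving the second bullet. The main obstacle is this final size-tracking step, which requires carefully combining the order-$3$ action with the specific locations of $Z(Q)$ and $[Q,Q]$ within $Z(P)$ and $[P,P]$ respectively to rule out higher nilpotency classes and larger commutator subgroups.
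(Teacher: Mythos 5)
Your approach diverges genuinely from the paper's. The paper's proof is short precisely because it outsources the hard work to \cite[Theorem~1.3]{CG}, which gives a complete list of $2$-groups of $2$-rank $2$ whose automorphism group is not a $2$-group; the paper then simply checks which entries of that list embed in $P$, using the structural constraints of Lemma~\ref{Q8} (cyclic centralizer of a $Q_8$), \cite[Lemma~2(xii)]{ABG} (centers of non-abelian subgroups lie in $Z(P)$), and the shape of $Z(P)$. You instead attempt a self-contained classification: extract $\sigma\in\Aut_{\CF}(Q)$ of order $3$, observe $\sigma$ centralizes every cyclic characteristic subgroup, and split into abelian/non-abelian cases. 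Your abelian case is essentially correct and closely parallels what the paper needs anyway (homocyclicity from $\CG$-type facts, then Lemma~\ref{HomocyclicSubgroup}, with the exceptional $C_2\times C_2\not\le P_0$ case killed by the final clause of Lemma~\ref{essential2} — and you are right that its proof only uses saturation, not the $\CF$-essentiality of $P_1$ stated in that lemma's hypotheses).

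The non-abelian case, however, contains a genuine gap, and you flag it yourself. You correctly pin down that $Z(Q)\le\langle ab\rangle$, that $[Q,Q]\le\langle ab^{-1}\rangle$, and that $Z(Q)\cap[Q,Q]$ has order at most $2$, and you correctly observe that a coprime-action argument forces $\sigma$ to act non-trivially on $Q^{ab}$. But from there the assertions $|[Q,Q]|=2$ and $Q/Z(Q)\cong C_2\times C_2$ (equivalently, that $Q$ has nilpotency class $2$) are exactly what needs proof, and ``careful size-tracking'' is not an argument. Note that $Q^{ab}$ need not a priori be $2$-generated — $Q$ could require three generators, since the bound on $2$-rank does not bound $d(Q)$ (e.g.\ $d(Q_8)=2>\mathrm{rank}(Q_8)=1$) — so you cannot immediately invoke ``$\Aut$ of non-homocyclic rank-$2$ abelian $2$-groups is a $2$-group'' for $Q^{ab}$. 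A coprime-action analysis does show $Q^{ab}\cong C_{2^c}\times C_2^2$ with $[Q^{ab},\sigma]\cong C_2^2$ and $Z(Q)\cap[Q,Q]\cong C_2$, but this by itself does not rule out $|[Q,Q]|\ge 4$ (class $\ge 3$), and I could not close this with a few more lines. This is precisely the hard combinatorial content that \cite[Theorem~1.3]{CG} supplies and the paper deliberately does not reprove. So your plan is sound in outline but needs a real argument — not a placeholder — to eliminate higher class in the non-abelian case; as written it does not constitute a proof of the lemma.
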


\begin{proof}
Since $P$ has $2$-rank $2$, the $2$-rank of any subgroup of $P$ is less than or equal to $2$. So the $2$-rank 
of $Q$ is either $1$ or $2$. A $2$-group with rank $1$ is either isomorphic to a cyclic or a generalized 
quaternion group
 Among these only $Q_8$ has automorphism group which is not a $2$-group
{{} (see \cite[Lemma 2.1(a)]{O})}.

Now, assume that $Q$ has $2$-rank $2$. Then \cite[Theorem 1.3]{CG} 
gives the list of all $2$-groups of $2$-rank $2$ whose
automorphism groups are not $2$-groups. 
We will go over the list and decide which groups in the list can occur as subgroups of $P$. 

In a wreathed $2$-group there is no subgroup isomorphic
to either $Q_8 \times C_{2^m}$ for $m\geq 1$ or $Q_8 \times Q_{2^t}$ for $t\geq 3$. 
Indeed, from Lemma \ref{Q8}, we know that any subgroup of $P$
which is isomorphic to $Q_8$ has its centralizer in $P$ equal to $Z(P)=\langle ab \rangle$, 
so a generator of $C_{2^m}$ or $Q_{2^t}$ 
should be of the form $(ab)^{2^k}$ for some $k$ where $0\leq k \leq n-1$. 
But this would imply that $((ab)^{2^k})^{2^{n-1-k}}=(ab)^{2^{n-1}}$ 
to lie in either of these groups. However, this can not happen since this element lies in every subgroup of $P$ which is isomorphic to $Q_8$ by Lemma \ref{Q8}. 
Hence, we cannot construct such direct products inside $P$. 

Now, let us consider the central product group case. In this case, there are two isomorphism classes
of groups in the list: $Q_8 * C_{2^m}$ for $m\geq 2$ and $Q_8 * D_{2^t}$ for $t\geq 3$. 
By \cite[Theorem 2.5.3]{Gor}, to form a central product $Q_8 * K$,
the group which is isomorphic to $Q_8$ should centralize $K$, so that $K \leq Z(P)$ by Lemma \ref{Q8}. Since $Z(P)$ is
cyclic, we can only have $Q_8 * C_{2^m}$ among the possible central products. 

From the constructions of $X_n$ and $Y_n$,
it is easy to see that they can not occur as a subgroup of $P$ as they should contain a subgroup isomorphic to $Q_8 \times C_{2^m}$
\cite[Section 6, third paragraph]{CG} which can not be a subgroup of $P$ by the argument in the third paragraph of the proof. 
The wreath product $Q_8 \wr C_2$ also can not occur as a subgroup of $P$, since $Q_8 \times Q_8$ 
can not occur by the third paragraph of the 
proof, similarly. 

The final candidate in the list is the Sylow $2$-subgorup
of ${\mathrm{U}}_3(4) ={\mathrm{PSU}}_3(4)$
which is a Suzuki $2$-group. Let us call it {\sf Suz}. 
By \cite[(3) on p.50]{Col72},
$Z({\sf Suz}) \cong C_2 \times C_2$ 
however $Z(P)=\langle ab \rangle \cong C_{2^n}$. This is a contradiction 
since \cite[Lemma 2 (xii)]{ABG} states that every non-abelian subgroup of $P$ has
its center contained in the center of $P$.

It follows from Lemma \ref{essential} and Lemma \ref{essential2}, the subgroups listed in the 
statement are the only subgroups of $P$, whose $\CF$-automorphism group contain an odd order element. 
So the proof is established.
\end{proof}

\begin{Lemma}\label{SS3}
Let $P$ be a wreathed $2$-subgroup of $G$ and assume that $\CF:=\CF_P(G)$ is saturated. 
Assume that for $Q\leq P$,
$Q$ is isomorphic to one of the groups in Lemma \ref{list} and Q is fully $\CF_P(G)$-normalized.
If $N_G(Q)/Q\,C_G(Q) $ is not a $2$-group, then $N_P(Q)/Q\,C_P(Q)\cong C_2$ and 
$N_G(Q)/Q\,C_G(Q) \cong S_3$. 
\end{Lemma}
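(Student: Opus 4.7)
My plan is to handle the two families of $Q$ listed in Lemma~\ref{list} in parallel, and to read off the $S_3$ structure from an action on an $S_3$-quotient rather than re-running the essentiality arguments of Lemma~\ref{essential} or Lemma~\ref{essential2} (neither of which is available here).

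First I would establish $N_P(Q)/Q\,C_P(Q)\cong C_2$ by a purely group-theoretic computation inside $P$. For $Q=\langle a^{2^m}\rangle\times\langle b^{2^m}\rangle\leq P_0$, the subgroup $Q$ is characteristic in $P_0$ so $N_P(Q)=P$; any $x=a^ib^jt\in P-P_0$ conjugates $a^{2^m}$ to $b^{2^m}$, so $C_P(Q)=P_0$, whence $Q\,C_P(Q)=P_0$ and $N_P(Q)/Q\,C_P(Q)=P/P_0\cong C_2$. For $Q\leq_P P_1$ with $Q\cong Q_8*C_{2^m}$, this is exactly what is already proved in the body of Lemma~\ref{essential2}: Lemma~\ref{Q8} gives $C_P(Q)=Z(P)$, hence $Q\,C_P(Q)=P_1$, and then $N_P(P_1)/P_1\cong C_2$ (from \cite[Lemma~3(ii)]{ABG}) together with the explicit element $x=a^rb^st$ produced there forces $N_P(Q)/Q\,C_P(Q)\cong C_2$ (this part of the proof of Lemma~\ref{essential2} does not actually invoke essentiality of $P_1$).

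Next I would transfer the Sylow-$2$ information from $N_G(Q)/C_G(Q)$ to $N_G(Q)/Q\,C_G(Q)$. Since $Q$ is fully $\CF$-normalized and $\CF$ is saturated, Lemma~\ref{Lin2.5} gives $N_P(Q)/C_P(Q)\in\Syl_2(N_G(Q)/C_G(Q))$. The canonical surjection $N_G(Q)/C_G(Q)\twoheadrightarrow N_G(Q)/Q\,C_G(Q)$ has kernel $Q\,C_G(Q)/C_G(Q)\cong Q/Z(Q)$, which is a $2$-group, so the image $N_P(Q)/Q\,C_P(Q)\cong C_2$ is a Sylow-$2$ subgroup of $N_G(Q)/Q\,C_G(Q)$.

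To pin down the full isomorphism type, I would exploit the map $\Out(Q)\to\Aut(Q/K)\cong\GL_2(\mathbb{F}_2)\cong S_3$, where $K=\Phi(Q)$ in the abelian type~1 case and $K=Z(Q)$ in the central-product type~2 case; in both settings $Q/K\cong C_2\times C_2$. A standard $p$-group argument---using that $\Hom(Q/[Q,Q],Z(Q))$ and $\Aut(Z(Q))$ are $2$-groups in type~2, respectively the Burnside/stability-group argument in type~1---shows that this map has $2$-group kernel. The hypothesis that $N_G(Q)/Q\,C_G(Q)$ is not a $2$-group then produces an element of order $3$ in its image in $S_3$, while the Sylow-$2$ generator inherited from $N_P(Q)$ maps to a non-trivial transposition (in type~1 it swaps the images of $a^{2^m}$ and $b^{2^m}$ in $(\mathbb{F}_2)^2$, and in type~2 a direct check with the element $x=a^rb^st$ of Lemma~\ref{essential2} shows that it permutes the three non-trivial cosets of $Y/Z(Y)$ non-trivially). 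Hence the image coincides with $S_3$, and because the kernel is a $2$-group while $|N_G(Q)/Q\,C_G(Q)|_2=2$, the map is an isomorphism, giving $N_G(Q)/Q\,C_G(Q)\cong S_3$. The step I expect to be the most delicate is the type~2 bookkeeping: one must unwind the conjugation action of $x=a^rb^st$ on $Y/Z(Y)\cong C_2\times C_2$ using the congruence $r-s\equiv i-j+2^{n-2}\pmod{2^n}$ and confirm that the induced permutation is a transposition rather than the identity.
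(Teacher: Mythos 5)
Your argument is correct and reaches the conclusion by a genuinely different route from the paper's. The paper's proof of Lemma~\ref{SS3} invokes the classification of saturated fusion systems on the wreathed $2$-group (\cite[Proposition~2]{ABG}, \cite[Theorem~5.3]{CG}) to translate the hypothesis ``$N_G(Q)/Q\,C_G(Q)$ is not a $2$-group'' into the essentiality of $P_0$ (in the abelian case) or of $P_1$ (in the $Q_8*C_{2^m}$ case), and then simply reads off both conclusions from Lemmas~\ref{essential} and~\ref{essential2}, which were tailored to exactly those hypotheses. You bypass the classification and essentiality entirely: you compute $N_P(Q)/Q\,C_P(Q)\cong C_2$ directly in $P$, use Lemma~\ref{Lin2.5} to identify it with a Sylow $2$-subgroup of $N_G(Q)/Q\,C_G(Q)$, then control the whole outer automorphism group by the natural map $\Out(Q)\to\Aut(Q/K)\cong\GL_2(\mathbb{F}_2)\cong S_3$ with $2$-group kernel and read the $S_3$ structure from the transposition (the $P$-conjugation) together with the $3$-element forced by the hypothesis. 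This is more self-contained and arguably more elementary; the cost is that you must verify by hand that the $P$-conjugation acts as a \emph{transposition}, not trivially, on $Q/K$ (which the paper obtains for free from the explicit order-$3$ automorphism produced in Lemmas~\ref{essential} and~\ref{essential2}). One small point of care: in the central-product case, the identification of the kernel of $\Aut(Q)\to\Aut(Q/Z(Q))$ with a subset of $\Hom(Q/[Q,Q],Z(Q))$ is a set injection, not a group homomorphism; you should first factor through the restriction map to $\Aut(Z(Q))$ and then apply the $\Hom$ argument to the residual kernel, so that the conclusion is that the kernel is an extension of $2$-groups. You mention both $\Aut(Z(Q))$ and $\Hom(Q/[Q,Q],Z(Q))$, so the ingredients are there, but the bookkeeping deserves to be spelled out.
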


\begin{proof}
The classification of saturated fusion systems on $P$ given in \cite[Proposition 2]{ABG} or \cite[Theorem 5.3]{CG}, 
states that there are four possible saturated fusion systems which are determined by $\CF$-essentialities of $P_0$ and $P_1$.

{\sf Case 1:}
Let us assume that $Q=\langle a^{2^{m}} \rangle \times \langle b^{2^{m}} \rangle $
for some $m$ with $0 \leq m \leq n-1$. If $P_0$ is not $\CF$-essential, 
then the central involution $(ab)^{2^{n-1}}$ is not fused under $\CF$ to either $a^{2^{n-1}}$ 
or $b^{2^{n-1}}$, thus $\Aut_{\CF}(Q)$ is a $2$-group, in this case. If $P_0$ is $\CF$-essential, then the result follows 
from  Lemma \ref{essential}. 

{\sf Case 2:}
Suppose that $Q \cong Q_8 * C_{2^m}$ for $1 \leq m \leq {{}n}$.
Then we can assume that $Q \leq P_1$. If $P_1$ is not $\CF$-essential, 
then $\Aut_{\CF}(P_1)$ is a $2$-group which implies that $\Aut_{\CF}(Q)$ is a $2$-group 
because otherwise the odd order $\CF$-automorphism of $Q$ would extend to an odd order 
automorphism of $P_1$ (see the proof of Lemma \ref{essential2}). So assume that $P_1$ is $\CF$-essential, 
then the result follows from Lemma \ref{essential2}. 
\end{proof}

\begin{Theorem}\label{HQ}
Let $P$ be a wreathed $2$-subgroup of $G$, namely
$P\cong C_{2^n}\wr C_2$ for some $n\geq 2$,
and assume that $\CF_P(G)$ is a saturated fusion system. 
Let $Q\leq P$. Assume that
$Q$ is one of the groups in Lemma \ref{list} and Q is fully $\CF_P(G)$-normalized.
Assume, moreover, that $C_G(Q)$ is $2$-nilpotent and $N_G(Q)/Q\,C_G(Q) $ is not a $2$-group.
Then there exists a subgroup $H_Q$ of $N_G(Q)$ such that $N_P(Q)$ is a Sylow $2$-subgroup
of $H_Q$ and $|N_G(Q) : H_Q|$ is a power of $2$ (possibly $1$).
\end{Theorem}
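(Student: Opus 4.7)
My approach is to take $H_Q:=N_G(Q)$, so that condition (b) is automatic with $a=0$; the whole task then reduces to verifying that $N_P(Q)$ is a Sylow $2$-subgroup of $N_G(Q)$, which is precisely condition (a).

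First, since $N_G(Q)/Q\,C_G(Q)$ is not a $2$-group by hypothesis, Lemma \ref{SS3} upgrades this to $N_G(Q)/Q\,C_G(Q)\cong S_3$ and $N_P(Q)/Q\,C_P(Q)\cong C_2$. Next, since $\CF:=\CF_P(G)$ is saturated and $Q$ is fully $\CF$-normalized, Lemma \ref{Lin2.5} gives both that $N_P(Q)/C_P(Q)\in\Syl_2(N_G(Q)/C_G(Q))$ and that $Q$ is fully $\CF$-centralized in $P$; the latter, together with saturation, is equivalent to $C_P(Q)\in\Syl_2(C_G(Q))$. Multiplying the orders coming from these two Sylow facts yields
\[
|N_P(Q)| \;=\; |N_P(Q)/C_P(Q)|\cdot|C_P(Q)| \;=\; |N_G(Q)/C_G(Q)|_2\cdot|C_G(Q)|_2 \;=\; |N_G(Q)|_2,
\]
so $N_P(Q)\in\Syl_2(N_G(Q))$. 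Hence $H_Q:=N_G(Q)$ satisfies both required conditions, and the theorem follows.

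The main (and essentially only) obstacle is the Sylow-ness of $N_P(Q)$ in $N_G(Q)$, which is handled by Lemma \ref{Lin2.5} combined with fully centralizedness. With this trivial choice of $H_Q$ the $2$-nilpotency hypothesis on $C_G(Q)$ does not enter the proof directly; it supplies the structural decomposition $C_G(Q)=O_{2'}(C_G(Q))\rtimes C_P(Q)$, which, along with the explicit $S_3$-structure from Lemma \ref{SS3}, will be the essential input when one combines Theorem \ref{HQ} with Theorem \ref{IK2} to deduce indecomposability of $\Res_{Q\,C_G(Q)}^{N_G(Q)}\!\bigl(\Sc(N_G(Q),N_P(Q))\bigr)$, and subsequently in the diagonal analysis inside $G\times G'$ for the proof of the main theorem.
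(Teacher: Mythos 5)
Your plan is to set $H_Q := N_G(Q)$, which works only if $N_P(Q)$ is a Sylow $2$-subgroup of $N_G(Q)$. The step where this fails is the sentence ``$Q$ is fully $\CF$-centralized in $P$; the latter, together with saturation, is equivalent to $C_P(Q)\in\Syl_2(C_G(Q))$.'' That equivalence is \emph{only} valid when $P\in\Syl_p(G)$. In Theorem~\ref{HQ} the hypothesis is just that $P$ is a $2$-subgroup of $G$ with $\CF_P(G)$ saturated; nothing forces $P$ to be Sylow. ``Fully $\CF$-centralized'' is a statement about $|C_P(Q)|$ being maximal among $\CF$-conjugates inside $P$; it does not make $C_P(Q)$ big enough to be Sylow in $C_G(Q)$ unless $P$ itself is Sylow in $G$. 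Compare with the paper's Lemma~\ref{C_G(Q)2-nilp}, which does assume $P\in\Syl_2(G)$ precisely so it can invoke Lemma 2.10(i) of \cite{L} to get $C_P(Q)\in\Syl_2(C_G(Q))$; that assumption is conspicuously absent from Theorem~\ref{HQ}.

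This is not just a formal quibble: the theorem is applied, via Lemma~\ref{2-nilpotent}, Lemma~\ref{2abelian} and the proof of the main theorem, to the group $\mathfrak G=G\times G'$ with the diagonal $2$-subgroup $\Delta P$. There $\Delta P$ is \emph{never} a Sylow $2$-subgroup of $\mathfrak G$ (its order is $|P|$ while $|\mathfrak G|_2=|P|^2$), and correspondingly $N_{\Delta P}(\Delta Q)$ is far from Sylow in $N_{\mathfrak G}(\Delta Q)\supseteq N_G(Q)\times N_{G'}(Q)$. So with your choice $H_Q:=N_G(Q)$, condition (a) of Theorem~\ref{IK2} would simply be false in the intended application. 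The paper avoids this by actually \emph{constructing} a proper subgroup $H_Q$: for $Q=P_0$ or $Q=P_1$ it passes to the quotient by $K_Q\times Q$ (where $K_Q=O_{2'}(Q\,C_G(Q))$, using the $2$-nilpotency hypothesis in an essential way), uses Lemma~\ref{BaerSuzuki} to find an $S_3$ containing the image of a suitable involution from $N_P(Q)$, and takes $H_Q$ to be the preimage; for the remaining $Q$ it sets $H_Q:=H_{P_i}\cdot K_Q$ and does an index computation. That construction, not the Sylow counting you propose, is what makes $N_P(Q)$ Sylow in $H_Q$ while keeping $|N_G(Q):H_Q|$ a $2$-power.
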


\begin{proof}
Since $C_G(Q)$ is $2$-nilpotent, the group $Q\,C_G(Q)$ is also $2$-nilpotent. Let 
$K_Q:=O_{2'}(Q\,C_G(Q))$ and let $R_Q \in {\mathrm{Syl}}_2(Q\,C_G(Q))$ containing
$Q\,C_P(Q)$, so that $Q\,C_G(Q)=K_Q\rtimes R_Q$. Note that since $O_{2'}(Q\,C_G(Q)) 
=O_{2'}(C_G(Q))$ we have $[K_Q,Q]=1$ and so $K_Q\rtimes Q=K_Q \times Q$.
Note that $(K_Q\times Q) \unlhd (K_Q \rtimes N_P(Q))$. 
Moreover, since $K_Q$ is a characteristic subgroup of $Q\,C_G(Q)$ and $Q\,C_G(Q) \unlhd N_G(Q)$,
we have $K_Q \unlhd N_G(Q)$, so that $K_Q \times Q \unlhd N_G(Q)$.

We will take quotients with respect to $L_Q:=K_Q \times Q$ and use the notation 
$\overline{H}$ to denote the image of $H \leq N_G(Q)$
under the natural epimorphism $\pi_{L_Q}:N_G(Q) \twoheadrightarrow N_G(Q)/L_Q$. Then 
$\overline{Q\,C_G(Q)} \cong R_Q/Q$ is
a normal 2-subgroup of $\overline{N_G(Q)} $. 

Let us set $Q=P_0$ or $Q=P_1$. Then, by our assumption, it follows that $Q$ is $\CF$-essential, 
so \cite[Proposition 2]{ABG} implies that
$$\overline{N_G(Q)} \ / \ \overline{Q\,C_G(Q)} \cong N_G(Q) / Q\,C_G(Q) \cong S_3.$$
Since $Q$ is $\CF$-centric, it follows that $Q=Q \, C_P(Q)$ and so Lemma \ref{SS3} implies that 
$$\overline{K_Q \rtimes N_P(Q)} \cong N_P(Q)/Q 
=
N_P(Q)/ Q \, C_P(Q) \cong C_2.$$ 
It follows that there exists an involution $\bar x$ such that
$$
\bar x\in\overline{K \rtimes N_P(Q)}   -  \overline{K\rtimes Q\,C_P(Q)}
$$
(recall that $\overline{K \rtimes N_P(Q)}\cong N_P(Q)/Q$ and 
$\overline{K\rtimes Q\,C_P(Q)}\cong (Q\,C_P(Q))/Q)$. 
Hence, by Lemma \ref{BaerSuzuki} there is a subgroup 
$H$ of $\overline{N_G(Q)}$ such that ${{}\bar x} \in H \cong S_3$. Set $H_Q$
as the preimage of $H$ under $\pi_{L_Q}$. Then $H_Q$ contains $N_P(Q)$ as a Sylow $2$-subgroup 
since $H_Q$ contains $\langle x, Q \rangle=N_P(Q)$ by construction,
and that $|N_G(Q): H_Q|$ is a power of $2$ for $Q=P_0$ and $Q=P_1$.

Suppose now that $Q \neq P_i$ for $i=0, 1$, then Lemma \ref{list} implies that either $Q< P_0$ or $Q< P_1$, 
that is $Q <P_i$ where $i$ is equal to precisely one of $0$ or $1$. Moreover, if $Q <P_i$, it would imply that 
$P_i$ has an odd order automorphism and so $P_i$ is $\CF$-essential. 
Set $H_Q:= H_{P_i} \cdot K_Q$, then Lemma \ref{essential} and Lemma \ref{essential2} imply that 
$H_Q$ is a subgroup of $N_G(Q)$. We claim 
that $H_Q$ satisfies the required properties. To do this, first let us recall that 
$Q \, C_P(Q)=P_i$ and so from 
{\color{black}
Lemmas \ref{essential} and \ref{essential2}
}
  that $N_P(Q)=N_P(P_i).$ 
  
Now, we claim that $N_G(P_i) \cap (Q \, C_G(Q))=P_i \, C_G(P_i)$. Since $Q < P_i$, we have that 
$C_G(P_i) \leq C_G(Q)$ so that $P_i \,C_G(P_i) \leq P_i\, C_G(Q) = Q \, C_G(Q)$. As a result, 
$P_i C_G(P_i) \leq N_G(P_i) \cap Q \,C_G(Q)$. 

{\color{black}
Conversely, let $g \in N_G(P_i) \cap Q \,C_G(Q)$, then $c_g \in \Inn(Q)$, and from the proofs of 
Lemmas \ref{essential} and \ref{essential2}, it follows that $c_g \in \Inn(P_i)$, or equivalently 
$g\in P_i \, C_G(P_i)$.   }

Lastly, {\color{black}  $P_i \,C_G(P_i) \leq Q \, C_G(Q)$ 
and any element of $K_{P_i}$ has odd order, it follows that
$K_{P_i}$ is contained in $K_Q$ because} $K_Q$ contains all odd order elements of $Q \, C_G(Q)$.
Also, the order of $R_{P_i}$ divides the order of $R_Q$ and the result is a power of $2$. 
Now since $N_G(Q)=N_G(P_i) \ (Q \, C_G(Q))$, it follows that 
the order of $N_G(Q)$ is equal to 
$$ \frac{|N_G(P_i)| \cdot |K_Q| \cdot |R_Q|}{|K_{P_i}| \cdot |R_{P_i}|}.$$ 
On the other hand, let us observe now that $H_{P_i} \cap K_{Q} \leq N_G(P_i) 
\cap K_Q \leq N_G(P_i) \cap (Q \, C_G(Q))=P_i \, C_G(P_i)$, 
which implies that $H_{P_i} \cap K_{Q} =K_{P_i}$. So 
the order of $H_Q$ is equal to 
$$ \frac{|H_{P_i}| \cdot |K_Q| }{|K_{P_i} |}.$$ 
Hence, the index of $H_Q$ in $N_G(Q)$ is equal to $|R_Q|/|R_{P_i}|$ which is a 
power of $2$. Moreover, since we have that $N_P(Q)=N_P(P_i)$, we have that 
$N_P(Q)$ is a subgroup of $H_Q$, moreover from the order computation of $H_Q$ above, 
it is easy to see that $N_P(Q)$ is a Sylow $2$-subgroup of $H_Q$. Therefore, we have created 
the required $H_Q$ for all possible cases, and the proof is finished.
\end{proof}

\section{Scott modules with wreathed $2$-group vertices}
\noindent
In this section we investigate the Scott modules when their vertices are
the wreathed $2$-groups.

\begin{Lemma}\label{2-nilpotent}
Assume that $P\cong C_{2^n}\wr C_2$ for some $n\geq 2$ and $P$ is a 
subgroup of $G$ with
$\mathcal F:= \mathcal F_P(G)$ is saturated. 
Further if $Q$ is a fully $\mathcal F$-normalized subgroup of $P$ such that
$C_G(Q)$ is $2$-nilpotent, then 
$\Res\,^{N_G(Q)}_{Q\,C_G(Q)}\Big({\mathrm{Sc}} (N_G(Q), N_P(Q))\Big)$ is
indecomposable.
\end{Lemma}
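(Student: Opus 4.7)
The plan is to verify the hypotheses of Theorem~\ref{IK2}; that is, to exhibit a subgroup $H_Q\leq N_G(Q)$ for which $N_P(Q)\in\Syl_2(H_Q)$ and $|N_G(Q):H_Q|$ is a power of $2$. Once such an $H_Q$ is in place, Theorem~\ref{IK2} yields at once that $\Res^{N_G(Q)}_{Q\,C_G(Q)}\bigl(\Sc(N_G(Q),N_P(Q))\bigr)$ is indecomposable. I would split the argument into two cases according to whether the quotient $N_G(Q)/Q\,C_G(Q)$ is a $2$-group.

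First suppose $N_G(Q)/Q\,C_G(Q)$ is a $2$-group. Since $C_G(Q)$ is $2$-nilpotent by hypothesis, the subgroup $K_Q:=O_{2'}(C_G(Q))=O_{2'}(Q\,C_G(Q))$ is characteristic in $Q\,C_G(Q)$, hence normal in $N_G(Q)$, and $N_G(Q)/K_Q$ is a $2$-group. I would then set
\[H_Q:=K_Q\cdot N_P(Q),\]
which is a subgroup of $N_G(Q)$ because $K_Q\unlhd N_G(Q)$. As $K_Q$ has odd order and $K_Q\cap N_P(Q)=1$, it follows that $N_P(Q)\in\Syl_2(H_Q)$; and choosing $T\in\Syl_2(N_G(Q))$ with $N_P(Q)\leq T$, one computes $|N_G(Q):H_Q|=|T|/|N_P(Q)|$, which is indeed a power of $2$.

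Now suppose $N_G(Q)/Q\,C_G(Q)$ is not a $2$-group. Since $Q$ is fully $\CF$-normalized, Lemma~\ref{list} forces $Q$ to be one of the explicitly listed subgroups of $P$ (either a homocyclic subgroup of $P_0$, or a central product of the form $Q_8\ast C_{2^m}$ conjugate into $P_1$). All remaining hypotheses of Theorem~\ref{HQ} are then in place, and that theorem produces the required $H_Q$ directly; Theorem~\ref{IK2} then finishes the proof.

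The bulk of the substantive work has already been carried out in Theorem~\ref{HQ}, where the construction of $H_Q$ in the non-$2$-group case is done; the present lemma is essentially the packaging that routes the $2$-nilpotency hypothesis on $C_G(Q)$ into an application of Theorem~\ref{IK2}, with the $2$-group case dispatched by the elementary $2$-nilpotent structure argument above. Accordingly, I do not expect any further obstacle beyond the clean bookkeeping between the two cases.
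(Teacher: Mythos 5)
Your proposal is correct and follows essentially the same strategy as the paper: in both cases one constructs an $H_Q$ verifying the hypotheses of Theorem~\ref{IK2}, using the $2$-nilpotence of $C_G(Q)$ (hence of $N_G(Q)$) to take $H_Q=O_{2'}\cdot N_P(Q)$ when $N_G(Q)/Q\,C_G(Q)$ is a $2$-group, and invoking Lemma~\ref{list} together with Theorem~\ref{HQ} otherwise. The only difference is cosmetic: the paper first splits on whether $Q$ appears in Lemma~\ref{list} and then subdivides, whereas you split directly on whether $N_G(Q)/Q\,C_G(Q)$ is a $2$-group, which merges the paper's first two subcases into one.
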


\begin{proof}
Suppose that $Q$ is not equal to one of the groups in Lemma \ref{list}, then $N_G(Q)/C_G(Q)$ is a $2$-group.
Thus $N_G(Q)$ is $2$-nilpotent since $C_G(Q)$ is assumed to be $2$-nilpotent. So
we can write $N_G(Q)=K \rtimes S$ where $K:=O_{2'}(N_G(Q))$ and $S\in{\mathrm{Syl}}_2(N_G(Q))$.
Since $N_P(Q)$ is a $2$-subgroup of $N_G(Q)$, without loss of generality we can assume that $N_P(Q) \leq S$.
Set $H_Q:= K\rtimes N_P(Q)$, then $N_P(Q)\in{\mathrm{Syl}}_2(H_Q)$ and $|N_G(Q):H_Q|$ is
a power of $2$.

Suppose that $Q$ is equal to one of the groups in Lemma \ref{list}. 
If $N_G(Q)/Q\,C_G(Q)$ is a $2$-group, then $N_G(Q)/C_G(Q)$ is also a $2$-group, so that
we again get the desired subgroup $H_Q$ as in the previous paragraph.
So we can assume that $N_G(Q)/Q\,C_G(Q)$ is not a $2$-group.
In this case, Theorem \ref{HQ} implies that the desired subgroup $H_Q$ exists.

For all possible fully ${\mathcal F}$-normalized $Q$, we find the subgroup $H_Q$ satisfying the conditions of
Theorem \ref{IK2}, hence $\Res\,^{N_G(Q)}_{Q\,C_G(Q)}
\Big({\mathrm{Sc}}(N_G(Q), N_P(Q))\Big)$ is indecomposable.
\end{proof}

\begin{Theorem}\label{thm:M1}
Suppose that a finite group $G$ has a subgroup $P\cong C_{2^n}\wr C_2$ for some $n\geq 2$.
Assume further that the fusion system ${\mathcal F}_P(G)$ of $G$ over $P$ is saturated and that
$C_G(Q)$ is $2$-nilpotent for every fully ${\mathcal F}_P(G)$-normalized non-trivial subgroup
$Q$ of $P$. Then the Scott module
${\mathrm{Sc}}(G,P)$ is Brauer indecomposable.
\end{Theorem}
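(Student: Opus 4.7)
The plan is to invoke Theorem \ref{IK1}, which reduces the Brauer indecomposability of $\Sc(G,P)$ to a local check: it suffices to show that
$$\Res^{N_G(Q)}_{Q\,C_G(Q)}\bigl(\Sc(N_G(Q),N_P(Q))\bigr)$$
is indecomposable for every fully $\mathcal{F}_P(G)$-normalized subgroup $Q\leq P$. The hypothesis that $\mathcal{F}_P(G)$ is saturated is exactly what allows us to apply Theorem \ref{IK1}, and the subgroups we must test are parametrized by the $\mathcal{F}_P(G)$-conjugacy classes of subgroups of $P$.

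First I would dispose of the trivial subgroup $Q=1$. In that case $N_G(Q)=G=Q\,C_G(Q)$ and $N_P(Q)=P$, so the module in question is simply $\Sc(G,P)$, which is indecomposable by definition of the Scott module. For every non-trivial fully $\mathcal{F}_P(G)$-normalized subgroup $Q\leq P$, the hypothesis provides that $C_G(Q)$ is $2$-nilpotent, so Lemma \ref{2-nilpotent} applies and gives precisely the required indecomposability of $\Res^{N_G(Q)}_{Q\,C_G(Q)}\bigl(\Sc(N_G(Q),N_P(Q))\bigr)$. Combining these two observations with Theorem \ref{IK1} yields that $\Sc(G,P)$ is Brauer indecomposable.

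Thus the proof of the theorem itself is a short assembly of the preceding results; the real work has already been carried out in Section 3 and in Lemma \ref{2-nilpotent}. The main obstacle is not at this final stage but in the preparatory material: one needs the group-theoretic classification in Lemma \ref{list} of those subgroups $Q\leq P$ whose automizers in $G$ have odd-order elements, together with the careful construction in Theorem \ref{HQ} of the auxiliary subgroups $H_Q\leq N_G(Q)$ containing $N_P(Q)$ as a Sylow $2$-subgroup with $2$-power index. These feed into Theorem \ref{IK2} via Lemma \ref{2-nilpotent} to handle the delicate subgroups where $N_G(Q)/Q\,C_G(Q)$ is not a $2$-group; the remaining subgroups (where this automizer is a $2$-group) are handled directly using $2$-nilpotency of $C_G(Q)$ to write $N_G(Q)=K\rtimes S$ and take $H_Q=K\rtimes N_P(Q)$, as in the first paragraph of the proof of Lemma \ref{2-nilpotent}.
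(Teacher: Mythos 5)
Your proof is correct and follows exactly the paper's route: Theorem \ref{IK1} reduces the claim to the local condition, and Lemma \ref{2-nilpotent} (fed by Theorem \ref{HQ} and Theorem \ref{IK2}) supplies that condition for each non-trivial fully normalized $Q$, while $Q=1$ is trivial. The only difference is that you spell out the $Q=1$ case and the dependency chain, which the paper leaves implicit in its one-line proof.
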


\begin{proof}
Follows from Lemma \ref{2-nilpotent} together with Theorem \ref{IK1}.
\end{proof}

\begin{Lemma}\label{C_G(Q)2-nilp}
Suppose that $P\in{\mathrm{Syl}}_2(G)$ such that $P\cong C_{2^n}\wr C_2$ for
some integer $n\geq 2$ and that $Q\leq P$ such that $Q$ is fully $\CF_P(G)$-normalized in $P$.
Let $P_0$
be the base subgroup of $P$.
If furthermore $Q$ satisfies one of the following five conditions, then $C_G(Q)$ is $2$-nilpotent;
\begin{enumerate}

\item\label{nonabelian} 
$Q$ is non-abelian.
\item
\label{non-homocyclic} 
$Q$ is abelian of $2$-rank $2$, non-homocyclic and $Q\,{\not\leq}\,P_0$.
\item
\label{cyclic} 
$Q$ is cyclic and $Q\,{\not\leq}\,P_0$.
\item
\label{2abelianLemma}
 $Q=P_0$ or $Q=Z(P) \times \langle t \rangle$.
 \item
 \label{hcyclic}
 $Q=\langle (ab)^{2^{n-1}} \rangle \times \langle t \rangle$.
\end{enumerate}
\end{Lemma}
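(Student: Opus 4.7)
The plan is to reduce each case to Burnside's normal $2$-complement theorem. First, since $Q$ is fully $\CF_P(G)$-normalized and $\CF_P(G)$ is saturated, Lemma~\ref{Lin2.5} gives that $Q$ is also fully $\CF_P(G)$-centralized, so $S := C_P(Q)$ is a Sylow $2$-subgroup of $C_G(Q)$. The uniform goal will be to show $N_{C_G(Q)}(S) = C_{C_G(Q)}(S)$; this places $S$ in the center of its normalizer in $C_G(Q)$, and Burnside's theorem then yields the $2$-nilpotency of $C_G(Q)$.

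In case (\ref{nonabelian}), I would first establish $C_P(Q) \leq Z(P) = \langle ab \rangle$. Since $P_0$ is abelian and $Q$ is not, $Q$ meets $P\setminus P_0$; for any $x \in Q \cap (P\setminus P_0)$, Lemma~\ref{CentralizerOutsideP0} gives $C_P(Q) \leq C_P(x) = \langle x, ab\rangle$. A case analysis on whether a purported centralizer $g \in C_P(Q)$ lies in $P_0$ or not---using the explicit conjugation $(a^i b^j t) \cdot a^k b^l \cdot (a^i b^j t)^{-1} = a^l b^k$ together with the existence of two non-commuting elements of $Q$---then forces $g \in Z(P)$. Hence $S$ is cyclic and $\Aut(S)$ is a $2$-group, so every odd-order element of $N_{C_G(Q)}(S)$ acts trivially on $S$; combined with the containment of $2$-elements of $N_{C_G(Q)}(S)$ in the abelian Sylow $S \leq C_{C_G(Q)}(S)$, this gives $N = C$.

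For cases (\ref{non-homocyclic})--(\ref{hcyclic}), $Q$ is abelian, so $Q \leq Z(C_G(Q))$. The key structural step is to establish that $S$ is abelian, $Q \leq S$, and $S/Q$ is a cyclic $2$-group. In (\ref{non-homocyclic}) and (\ref{cyclic}), picking $x \in Q \setminus P_0$ and using abelianness of $Q$ together with Lemma~\ref{CentralizerOutsideP0} yields $Q \cap P_0 \leq Z(P)$, whence $S = C_P(x) = \langle x, ab\rangle$ and $S/Q$ is a cyclic quotient of $\langle ab\rangle/\langle x^2\rangle$. In (\ref{2abelianLemma}) one verifies $S = Q$ directly: for $Q = P_0$ the swap action of $t$ on $P_0$ shows that no $g \in P\setminus P_0$ centralizes $P_0$, and for $Q = Z(P) \times \langle t\rangle$ apply Lemma~\ref{CentralizerOutsideP0} to $t$. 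In (\ref{hcyclic}), $S = C_P(t) = \langle ab\rangle \times \langle t\rangle$, and $S/Q \cong \langle ab\rangle/\langle (ab)^{2^{n-1}}\rangle$ is cyclic of order $2^{n-1}$.

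With this structure in hand, the Burnside condition is secured by the main lifting step. For an odd-order $g \in N_{C_G(Q)}(S)$, the fact that $g$ centralizes $Q$ means $g$ induces an automorphism of the cyclic $2$-group $S/Q$, and this induced action is trivial since $\Aut(S/Q)$ is a $2$-group. Writing $g s g^{-1} = s q_s$ with $q_s \in Q \leq Z(C_G(Q))$ and iterating gives $g^k s g^{-k} = s q_s^k$ for all $k \geq 1$; taking $k = |g|$ forces $q_s^{|g|} = 1$, and since $|Q|$ is a power of $2$ coprime to $|g|$ we conclude $q_s = 1$ and $g \in C_{C_G(Q)}(S)$. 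Every $2$-element of $N_{C_G(Q)}(S)$ already lies in $S \leq C_{C_G(Q)}(S)$, so $N_{C_G(Q)}(S) = C_{C_G(Q)}(S)$ as required. The main obstacle will be the explicit wreath-product bookkeeping in cases (\ref{nonabelian}) and (\ref{non-homocyclic}): the computation $C_P(Q) \leq Z(P)$ in the former, and $Q \cap P_0 \leq Z(P)$ together with the identification $S = \langle x, ab\rangle$ in the latter, both rely on careful use of the explicit $t$-action on $P_0$ and of Lemma~\ref{CentralizerOutsideP0}.
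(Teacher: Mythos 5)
Your proof is correct, and for cases (2)--(5) it takes a genuinely different and more unified route than the paper. The paper determines $C_P(Q)$ explicitly as an isomorphism type in each case ($C_{2^{n}}$, $C_{2^{n+1}}$, $C_2\times C_{2^n}$, or $Q$ itself), then invokes an appropriate version of Burnside's normal $p$-complement theorem---citing the cyclic-Sylow criterion in cases (1) and (3, first subcase), citing the Craven--Glesser fact that $\Aut(C_2\times C_{2^n})$ is a $2$-group in cases (2), (3, second subcase) and (5), and using Schur--Zassenhaus in case (4). You instead isolate the single structural feature that $S:=C_P(Q)$ is abelian, contains $Q$, and has $S/Q$ cyclic, and then run a self-contained coprime-order iteration: an odd-order $g\in N_{C_G(Q)}(S)$ acts trivially on the cyclic $2$-group $S/Q$, so $gsg^{-1}=sq_s$ with $q_s\in Q$; since $g$ centralizes $Q$ and $S$ is abelian, iterating gives $g^{|g|}sg^{-|g|}=sq_s^{|g|}$, whence $q_s^{|g|}=1$ and, by coprimality, $q_s=1$. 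This replaces the repeated reference to explicit $\Aut$ computations (and Schur--Zassenhaus in case (4), where $S=Q$ makes your step vacuous) with one coprime-action argument; the cost is that you must verify the structural claim $S/Q$ cyclic case by case, but those verifications are short and do not require the Craven--Glesser automorphism lemma. Your case (1) reduction to $C_P(Q)\leq Z(P)$ follows the paper's line closely (both rely on picking a noncommuting pair in $Q$ and comparing orders via Lemma~\ref{CentralizerOutsideP0}), and then your concluding $N=C$ argument there is just the cyclic-Sylow Burnside criterion unwound, so the two proofs coincide on case (1). One small point worth spelling out when you write this up: $q_s$ commutes with both $g$ (since $g\in C_G(Q)$) and $s$ (since $S$ is abelian), which is exactly what makes the iteration $g^k s g^{-k}=s q_s^k$ valid.
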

\begin{proof}
Set $\CF:=\CF_P(G)$.
Since $P\in{\mathrm{Syl}}_2(G)$, $\CF$ is saturated by \cite[Proposition 1.3]{BLO}.
Since $Q$ is fully $\CF$-normalized in $P$,
we know by Lemma \ref{Lin2.5} that $Q$ is fully $\CF$-centralized in $P$, so that 
from Lemma 2.10(i) of \cite{L}, $C_P(Q)\in{\mathrm{Syl}}_2(C_G(Q))$.

{\sf Assume Case (1):} 
Since $Q$ is a non-abelian subgroup of $P$, there exists an element 
$x\in Q$ and $x\,{\not\in}\,P_0:=\langle a\rangle\times\langle b\rangle$. 
Then $C_P(x)=\langle ab, x \rangle$ 
from \cite[Lemma 3]{BW}.  Obviously, $C_P(x)$ is abelian since 
$\langle ab\rangle=Z(P)$ and $C_P(x)=\langle x\rangle\,Z(P)$.
Hence $Q\,{\not\leq}\,C_P(x)$ since $Q$ is non-abelian.
Thus, there is an element $y\in Q  -  C_P(x)$.
Set $C:=C_P(x)\cap C_P(y)$. 

Now {\sf we claim} that 
$$C_P(Q) \leq C = Z(P).$$
The first inequality is clear. Since $x \not\in P_0$, 
Lemma \ref{CentralizerOutsideP0} implies that $|C_P(x)|=2^{n+1}$.
Since $[x,y] {\not=}1$, $x\in C_P(x) -  C$, so that $C \lneqq C_P(x)$,
and hence $|C| < 2^{n+1}$.
Clearly $Z(P)\leq C$. Then since $|Z(P)|=2^n$, we get that $|C|=2^n$,
which implies that $Z(P)=C$. {\sf The claim} is proved. 
   
Since $Z(P)$ is cyclic, $C_P(Q)$ is cyclic. 
so by the Burnside normal $p$-complement theorem 
\cite[Theorem 7.6.1]{Gor},  $C_G(Q)$ is $2$-nilpotent.

{\sf Assume Case (2):}
Since $Q$ is abelian, non-homocyclic and $2$-rank $2$, 
$Q=\langle x \rangle \times \langle y \rangle$ for some $x, y \in P$  with $|x|\neq |y|$. 
Since $Q\,{\not\leq}\, P_0:=\langle a \rangle \times \langle b \rangle$, 
we can assume that $x=a^k b^\ell t$ for some integers 
$k, \ell$. Then $x^2=(ab)^{k+\ell}\in Z(P)$. 

\indent\indent
Assume first that $y\,{\not\in}\,P_0$. Then $y^2 \in Z(P)$ as above. 
Since $\langle x \rangle \cap \langle y \rangle=1$ and $\langle x^2 \rangle \cap \langle y^2 \rangle \leq \langle x \rangle \cap \langle y \rangle$,
we have that $\langle x^2 \rangle \cap \langle y^2 \rangle=1$. 
Since $Z(P)$ is cyclic and $x^2, y^2 \in Z(P)$, 
this can happen when either $x$ or $y$ is an involution.
Furthermore, by Lemma 3 of \cite{BW}, $C_P(x)=\langle x, ab \rangle $ and similarly 
$C_P(y)=\langle y, ab \rangle $, where $x\in C_P(y)$ and $y \in C_P(x)$. So $C_P(x)=C_P(y)$.
Hence $C_P(Q)=C_P(x)=C_P(y)$ since $Q = \langle x, y\rangle$.
Thus, we have that 
$C_P(Q)=\langle x \rangle \times \langle ab \rangle $ if $x$ is an involution,
and that $C_P(Q)=C_P(y)=\langle y \rangle \times \langle ab \rangle $ if $y$ is an involution. 
In both cases, we have that $C_P(Q)\cong C_2 \times C_{2^n}$. 

\indent\indent
Next assume that $y\,\in\,P_0$.  Then 
$y \in C_P(x)\cap P_0$. And from Lemma \ref{CentralizerOutsideP0}, we have that $y\in Z(P)$.
Moreover, $\langle x^2 \rangle \cap \langle y \rangle \leq\langle x \rangle \cap \langle y \rangle=1$ 
implies that $\langle x^2 \rangle \cap \langle y \rangle=1$.
Since $Z(P)$ is cyclic and $x^2, y \in Z(P)$, this can happen only when 
$x$ is an involution.
So $C_P(Q)=C_P(x) \cap C_P(y)= \langle x, ab \rangle \cap P=\langle x, ab \rangle$.
Hence, in this case, we have that 
$C_P(Q)=\langle x \rangle \times \langle ab \rangle \cong C_2 \times C_{2^n}$
since $|x|=2$ and $x\,{\not\in}\,Z(P)$.

We have known already that 
$C_P(Q)\in{\mathrm{Syl}}_2(C_G(Q))$. 
Further, \cite[Corollary 2.3(4)]{CG} implies that
 $\Aut(C_P(Q)) \cong \Aut(C_{2}\times C_{2^n})$ is a 
$2$-group since $n \geq 2$. So the normalizer of $C_P(Q)$ 
in $C_G(Q)$ is equal to the centralizer of $C_P(Q)$ in $C_G(Q)$
since $C_P(Q)$ is abelian.
Therefore, Burnside normal $p$-complement theorem 
\cite[Theorem 7.4.3]{Gor} yields  
 that $C_G(Q)$ is $2$-nilpotent.

{\sf Assume Case (3):}
Set $Q:=\langle x \rangle $.
Since $x \,{\not\in}\,P_0$, we can write $x=a^ib^j t$ and for integers $i, j$. Then $C_G(x)=C_G(Q)$. 
Recall that  $C_P(x)= \langle x, ab \rangle$ by \cite[Lemma 3]{BW}.

\indent\indent
{\sf Assume first that $i+j$ is odd.} 
Then $x^2=(ab)^{i+j}$ is a generator for $Z(P)= \langle ab \rangle \cong C_{2^n}$. So that 
$|x|=2^{n+1}$. Moreover 
since $C_P(x)= \langle x, ab \rangle$, 
$C_P(x)= \langle x\rangle$. 
Hence $C_P(x)$ is a cyclic Sylow $2$-subgroup of $C_G(x)$, so by 
the Burnside normal $p$-complement theorem \cite[Theorem 7.6.1]{Gor}), 
$C_G(x)$ is $2$-nilpotent.

\indent\indent
{\sf Next, assume that $i+j$ is even.} 
Then first of all, by Lemma \ref{CentralizerOutsideP0}, 
$|C_P(x)|=2^{n+1}$. Also,
$x^2=(ab)^{i+j}$ and $\langle x^2 \rangle\lneqq Z(P)$. 
Moreover, since $C_P(x)=\langle x, ab \rangle$, any element 
of $C_P(x)$ is of the form $(ab)^k a^ib^j t = a^{i+k}b^{j+k} t$.
Since $i+j$ is even, we can write $i+j=2m$ for an integer $m$. So, set $k':=2^{n-1}-m$. 
Then, $2^n=2(m+k')=2m+2k'=i+j+2k'$.
Set $y := a^{i+k'}b^{j+k'} t$.
Then, first of all, $y\in C_P(x)$
since $y=(ab)^{k'}{\cdot}a^i b^j t\in\langle ab, x\rangle$.
Further, second of all, 
$y$ is an involution which is not in $Z(P)$
since $y^2 = a^{i+j+2k'} b^{i+j+2k'} = (ab)^{i+j+2k'}=(ab)^{2^n}=1$.
Hence by the order argument 
$C_P(x)= \langle y \rangle \times\langle ab \rangle =\langle y \rangle \times Z(P) \cong C_2 \times C_{2^n}$ 
(recall that $n \geq 2$).  
Since ${\mathrm{Aut}}(C_2 \times C_{2^n})$ is a $2$-group by \cite[Corollary 2.3 (4)]{CG},
and since $C_G(x)$ has the Sylow $2$-subgroup $C_P(x) \cong C_2\times C_{2^n}$, 
it holds that the normalizer and the centralizer of the Sylow $2$-subgroup of $C_G(x)$ are equal,
and hence again the Burnside normal $p$-complement theorem 
\cite[Theorem 7.4.3]{Gor} 
implies that $C_G(x)=C_G(Q)$ is $2$-nilpotent. 

{\sf Assume Case (4):}
Obviously, $C_P(Q)=Q \unlhd C_G(Q)$.
On the other hand, we have known that $C_P(Q)\in{\mathrm{Syl}}_2(C_G(Q))$.

Namely, $C_G(Q)$ has the Sylow $2$-subgroup $Q$ which is central.
Thus, the Schur-Zassenhaus theorem implies that $C_G(Q)$ is $2$-nilpotent. 

{\sf Assume Case (5):} Then $C_P(Q)=C_P(t)=Z(P)\times \langle t \rangle \cong C_{2^n} \times C_2$. 
Similar to the Case (2) above, we conclude that $C_G(Q)$ is $2$-nilpotent.
\end{proof}

\begin{Lemma}\label{2abelian}
Suppose that $G, P, Q$ satisfy the assumption/condition in
Lemma \ref{C_G(Q)2-nilp}(\ref{2abelianLemma}).
Further, assume that $G'$ is a finite group such that
$P\in{\mathrm{Syl}}_2(G')$ and that $\CF_P(G)=\CF_P(G')$.
Set $\mathfrak G:= G \times G'$. Then, 
$\Res\,_{\Delta Q\,C_{\mathfrak G}(\Delta Q)}^{N_{\mathfrak G}(\Delta Q)}
\Big( {\mathrm{Sc}}(N_{\mathfrak G}(\Delta Q), N_{\Delta P}(\Delta Q)) \Big)$ is indecomposable.
\end{Lemma}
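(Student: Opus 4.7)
The plan is to apply Lemma \ref{2-nilpotent} with $(\mathfrak{G},\Delta P,\Delta Q)$ in place of $(G,P,Q)$. Since $\Delta P\cong P\cong C_{2^n}\wr C_2$ is a $2$-subgroup of $\mathfrak{G}$, what remains is to verify that $\CF_{\Delta P}(\mathfrak{G})$ is saturated, that $\Delta Q$ is fully $\CF_{\Delta P}(\mathfrak{G})$-normalized, and that $C_{\mathfrak{G}}(\Delta Q)$ is $2$-nilpotent.

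For the first two conditions I would exploit the hypothesis $\CF_P(G)=\CF_P(G')$ to produce an isomorphism of fusion systems $\CF_P(G)\cong\CF_{\Delta P}(\mathfrak{G})$ via $R\mapsto\Delta R$. Any morphism in $\CF_{\Delta P}(\mathfrak{G})$ comes from some $(g,g')\in\mathfrak{G}$ with $c_g|_R=c_{g'}|_R$, which is exactly a common element of $\Hom_{\CF_P(G)}(R,S)=\Hom_{\CF_P(G')}(R,S)$; conversely every $\varphi\in\Hom_{\CF_P(G)}(R,S)$ is simultaneously realized by some $g\in G$ and some $g'\in G'$, so $(g,g')$ induces $\Delta\varphi$. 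Since $P\in\Syl_2(G)$, $\CF_P(G)$ is saturated by \cite[Proposition 1.3]{BLO}, and this saturation transfers to $\CF_{\Delta P}(\mathfrak{G})$, as does full normalizedness of $Q$.

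For the $2$-nilpotence of $C_{\mathfrak{G}}(\Delta Q)$, the observation is that $Q$ is abelian in both subcases of hypothesis \ref{C_G(Q)2-nilp}(\ref{2abelianLemma}): for $Q=Z(P)\times\langle t\rangle$ a direct computation using $t^{-1}at=b$ and $t^{-1}bt=a$ yields $t(ab)t^{-1}=ab$, so $Z(P)\langle t\rangle$ is abelian. Hence $C_{\mathfrak{G}}(\Delta Q)=C_G(Q)\times C_{G'}(Q)$. Lemma \ref{C_G(Q)2-nilp}(\ref{2abelianLemma}) then applies to both $G$ and $G'$ (its hypotheses on $Q$ depend only on the pair $(P,\CF_P(G))=(P,\CF_P(G'))$), so each factor is $2$-nilpotent, and a direct product of $2$-nilpotent groups is $2$-nilpotent.

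With all hypotheses in place, Lemma \ref{2-nilpotent} immediately yields the indecomposability of $\Res\,^{N_{\mathfrak{G}}(\Delta Q)}_{\Delta Q\,C_{\mathfrak{G}}(\Delta Q)}(\Sc(N_{\mathfrak{G}}(\Delta Q),N_{\Delta P}(\Delta Q)))$. I expect the most delicate step to be the saturation transfer, since $\Delta P$ is not a Sylow $2$-subgroup of $\mathfrak{G}=G\times G'$; the equality $\CF_P(G)=\CF_P(G')$ is precisely the ingredient that makes the diagonal fusion system saturated and thereby brings the earlier machinery of Lemma \ref{2-nilpotent} and Theorem \ref{IK2} into play.
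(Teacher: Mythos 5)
Your proposal is correct and follows the same route as the paper's own (very brief) argument: identify $\CF_{\Delta P}(\mathfrak G)$ with $\CF_P(G)$ to transfer saturation and full normalizedness to $\Delta Q$, use $C_{\mathfrak G}(\Delta Q)=C_G(Q)\times C_{G'}(Q)$ together with Lemma \ref{C_G(Q)2-nilp}(\ref{2abelianLemma}) applied to each factor to get $2$-nilpotence, and then invoke Lemma \ref{2-nilpotent}. One minor point: your detour showing $Q$ is abelian is unnecessary for the centralizer decomposition, since $C_{G\times G'}(\Delta Q)=C_G(Q)\times C_{G'}(Q)$ holds for an arbitrary $Q$; otherwise the argument is exactly the paper's, fleshed out with useful extra detail on the fusion-system isomorphism $R\mapsto\Delta R$.
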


\begin{proof}

First of all, note that $\CF_P(G)= \CF_P(G')\cong\CF_{\Delta P}(\mathfrak G)$
by the assumption,
so that the final one is saturated. 
Obviously $\Delta Q$ is fully 
$\CF_{\Delta P}(\mathfrak G)$-normalized in $\Delta P$ since $Q$ is fully $\CF_P(G)$-normalized.
Now, since $C_{\mathfrak G}(\Delta Q)=C_G(Q)\times C_{G'}(Q)$, it follows from
Lemma  \ref{C_G(Q)2-nilp} that $C_{\mathfrak G}(\Delta Q)$ is $2$-nilpotent.
Hence the assertion follows by Lemma \ref{2-nilpotent}.
\end{proof}

\begin{Lemma} \label{homocyclic}
Let $G$ and $G'$ be finite groups such that $P\in{\mathrm{Syl}}_2(G)\cap{\mathrm{Syl}}_2(G')$,
$P \cong C_{2^n}\wr C_2$ for some $n\geq 2$, and 
$\mathcal F:=\mathcal F_P(G)=\mathcal F_P(G')$ and $\mathfrak G:=G\times G'$.

Assume that $Q\leq P$  is fully $\CF$-normalized.
Recall that $P_0$ is the base subgroup of $P$,  and let
$M:={\mathrm{Sc}}(\mathfrak G, \Delta P)$.

Suppose furthermore that the following two things hold:
\begin{enumerate}
\item[\rm (a)]
$M(\Delta Q)={\mathrm{Sc}}(N_{\mathfrak G}(\Delta Q), N_{\Delta P}(\Delta Q))$,
\item[\rm (b)]
$M(\Delta P_0)={\mathrm{Sc}} (N_{\mathfrak G}(\Delta P_0), N_{\Delta P}(\Delta P_0))$.
\end{enumerate}

Assume moreover that one of the following holds:

\begin{enumerate}
\item[\rm\tt (I)] 
$Q\leq P_0=C_P(Q)$, and 
$N_\mathfrak G(\Delta Q)/C_\mathfrak G(\Delta Q)\cong N_{\Delta P}(\Delta Q)/C_{\Delta P}(\Delta Q)$.
\item[\rm\tt (II)]
$Q:=\langle a^{2^m} \rangle \times \langle b^{2^m} \rangle$ for some 
$m$ with 
$0 \leq m \leq n-1$ and $N_\mathfrak G(\Delta Q)/C_\mathfrak G(\Delta Q)\cong S_3$.
\end{enumerate}
Then, we have that 
$\Res\,_{\Delta Q \, C_\mathfrak G(\Delta Q)}^{N_\mathfrak G(\Delta Q)} (M(\Delta Q))$ is indecomposable.
\end{Lemma}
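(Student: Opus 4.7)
Set $N := \Res^{N_\mathfrak{G}(\Delta Q)}_{\Delta Q\,C_\mathfrak{G}(\Delta Q)}(M(\Delta Q))$ and $K := \Delta Q\,C_\mathfrak{G}(\Delta Q)$. In both Cases~{\rm (I)} and~{\rm (II)} we have $C_P(Q) = P_0$ (in~{\rm (I)} by hypothesis; in~{\rm (II)} because $t \notin C_P(Q)$ when $m \le n-1$), hence $\Delta Q\,C_{\Delta P}(\Delta Q) = \Delta P_0$. Lemma~\ref{centralizer}(ii) then yields $\Sc(K,\Delta P_0)\,\big|\,N$, so by Lemma~\ref{centralizer2} it suffices to prove that $N$ is indecomposable. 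The plan is to compute the Brauer quotient $N(\Delta P_0)$ via assumption~(b) and Lemma~\ref{Kunugi}, and to compare it with the Brauer quotient of the Scott summand.

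Since $P_0$ is abelian, $\Delta Q \unlhd \Delta P_0$; Lemma~\ref{Kunugi} combined with (b) yields
\[
(M(\Delta Q))(\Delta P_0) \;=\; \Res^{N_\mathfrak{G}(\Delta P_0)}_{N_\mathfrak{G}(\Delta P_0)\cap N_\mathfrak{G}(\Delta Q)}\bigl(\Sc(N_\mathfrak{G}(\Delta P_0),N_{\Delta P}(\Delta P_0))\bigr).
\]
In Case~{\rm (II)}, $Q = \langle a^{2^m}\rangle \times \langle b^{2^m}\rangle$ is the $2^{n-m}$-torsion subgroup of $P_0$ and hence characteristic in $P_0$; this forces $N_\mathfrak{G}(\Delta P_0) \le N_\mathfrak{G}(\Delta Q)$, the intersection collapses, and $(M(\Delta Q))(\Delta P_0) = M(\Delta P_0)$ is indecomposable. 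In Case~{\rm (I)}, the Weyl-group equality $N_\mathfrak{G}(\Delta Q) = N_{\Delta P}(\Delta Q)\cdot K$ yields a single-double-coset Mackey computation
\[
\Res^{N_\mathfrak{G}(\Delta Q)}_{K}\bigl(\Ind^{N_\mathfrak{G}(\Delta Q)}_{N_{\Delta P}(\Delta Q)}(k)\bigr)\;\cong\;\Ind^{K}_{\Delta P_0}(k),
\]
using $N_{\Delta P}(\Delta Q)\cap K = \Delta P_0$, so $N$ sits inside $\Ind^K_{\Delta P_0}(k)$ as a summand which still contains $k$ in its top (inherited from the Scott module $M(\Delta Q)$).

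To conclude, write $N = \Sc(K,\Delta P_0)\oplus N'$ and take Brauer quotients at $\Delta P_0$. The dimension of $N(\Delta P_0)$ can be read off from the displayed formula together with~(b), while $\Sc(K,\Delta P_0)(\Delta P_0)$ is a direct Scott-module computation on the 2-group $\Delta P_0$; verifying the two dimensions coincide forces $N'(\Delta P_0) = 0$. Every indecomposable summand of $N$ is a trivial source $kK$-module whose vertex is $K$-conjugate into $N_{\Delta P}(\Delta Q)\cap K = \Delta P_0$ (Mackey--Green applied to $\Res^{N_\mathfrak{G}(\Delta Q)}_K$ and the vertex $N_{\Delta P}(\Delta Q)$ of $M(\Delta Q)$). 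Combined with $N'(\Delta P_0)=0$ and the uniqueness of $k$ in the top of $N$, this rules out a nonzero $N'$. The main obstacle is Case~{\rm (I)}: here $Q$ need not be characteristic in $P_0$, so $N_\mathfrak{G}(\Delta P_0)\cap N_\mathfrak{G}(\Delta Q)$ can be a proper subgroup of $N_\mathfrak{G}(\Delta P_0)$, and pinning it down using the common fusion system $\mathcal F$ is the technical crux of the dimension match.
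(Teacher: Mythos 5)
Your skeleton is right: identify $C_P(Q)=P_0$ in both cases, obtain the Scott summand $\Sc(K,\Delta P_0)\mid N$ from Lemma~\ref{centralizer}(ii), use Lemma~\ref{Kunugi} and hypothesis~(b) to control the Brauer quotient at $\Delta P_0$, and then show the complement vanishes. This is the same strategy the paper follows. But there are two genuine gaps in how you close it.

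First, and most seriously, you never establish $P_0\times P_0\unlhd C_\mathfrak{G}(\Delta Q)$. The paper first reduces (by an inflation argument, using that the Scott module lies in the principal block) to the case $O_{2'}(C_G(Q))=O_{2'}(C_{G'}(Q))=1$, and then invokes Brauer's theorem [B, Thm.~1] to conclude $P_0\unlhd C_G(Q)$ and $P_0\unlhd C_{G'}(Q)$. This normality is exactly what makes the Mackey restriction of $\Ind_{\Delta P_0}^{C}(k)$ to $P_0\times P_0$ break into summands $\Ind^{P_0\times P_0}_{{}^c(\Delta P_0)}(k)$ with vertex \emph{equal} to a $C$-conjugate of $\Delta P_0$ — not merely contained in one. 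Your closing argument says every summand of $N'$ has vertex $K$-conjugate \emph{into} $\Delta P_0$; but a trivial-source module with vertex strictly inside $\Delta P_0$ has $N'(\Delta P_0)=0$ without being zero, so the vanishing of the Brauer quotient at $\Delta P_0$ does not by itself kill $N'$. The normality is what upgrades ``into'' to ``equal to,'' and that is where Green's indecomposability theorem and $(27.7)$ of Th\'evenaz combine to force $N'(\Delta P_0)\ne0$ when $N'\ne0$. Without it, the ``dimension match'' you gesture at cannot rule out small-vertex pieces; and in any event you do not actually carry out that dimension computation. Relatedly, the remark that $\Sc(K,\Delta P_0)(\Delta P_0)$ is ``a direct Scott-module computation on the $2$-group $\Delta P_0$'' is off: $K=\Delta Q\,C_\mathfrak{G}(\Delta Q)$ is not a $2$-group; the point is simply that $\Delta P_0$ acts trivially on $\Sc(K,\Delta P_0)$, so the Brauer quotient at $\Delta P_0$ returns the module itself.

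Second, in Case~{\tt (II)} your Mackey analysis stops too early. You correctly note $Q$ characteristic in $P_0$ gives $N_\mathfrak{G}(\Delta P_0)\le N_\mathfrak{G}(\Delta Q)$, which is what you need for the Kunugi formula. But after writing $N=\Sc(K,\Delta P_0)\oplus N'$, you still need to analyze the double cosets $[C\backslash N_\mathfrak{G}(\Delta Q)/\Delta P]$: here $N_\mathfrak{G}(\Delta Q)/C\cong S_3$ is \emph{not} covered by $N_{\Delta P}(\Delta Q)\cdot C$, so you cannot invoke the Case~{\tt (I)} single-coset collapse. The paper handles this by using Lemma~\ref{essential} to write $N_\mathfrak{G}(\Delta Q)\le N_\mathfrak{G}(\Delta P_0)\cdot C$, expressing coset representatives as $x=c'n'$ with $n'\in N_\mathfrak{G}(\Delta P_0)$ so that ${}^x(\Delta P_0)={}^{c'}(\Delta P_0)$, and then reducing to the same $P_0\times P_0$-normality argument as in Case~{\tt (I)}. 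Your proposal is silent on this, so Case~{\tt (II)} is not actually reduced to the displayed formula plus a ``dimension check.''

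In short: the reduction to $O_{2'}$-free centralizers and the resulting normality $P_0\times P_0\unlhd C_\mathfrak{G}(\Delta Q)$ is the load-bearing technical step that your proposal omits, and the $S_3$-double-coset bookkeeping in Case~{\tt (II)} also needs Lemma~\ref{essential}, which you do not use.
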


\begin{proof}
Recall that ${\mathcal F}_{\Delta P}(\mathfrak G) \cong {\mathcal F}$
and note that $C_\mathfrak G(\Delta Q)=C_G(Q) \times C_{G'}(Q)$.
Furthermore, 
$$O_{2'}(C_\mathfrak G(\Delta Q)) \text{ char }C_\mathfrak G(\Delta Q) 
\unlhd N_\mathfrak G(\Delta Q)$$
so that 
$O_{2'}(C_\mathfrak G(\Delta Q)) \unlhd N_\mathfrak G(\Delta Q)$. Hence,  
$O_{2'}(C_\mathfrak G(\Delta Q)) \leq O_{2'}(N_\mathfrak G(\Delta Q))$. Therefore, 
$O_{2'}(C_\mathfrak G(\Delta Q)) \leq 
\ker\Big({\mathrm{Sc}}(N_\mathfrak G(\Delta Q), N_{\Delta P}(\Delta Q))\Big)$
since the Scott module is in the principal $2$-block.
So, by using an easy inflation argument, it is enough to prove the lemma
when $O_{2'}(C_\mathfrak G(\Delta Q))=1$,

Namely, we can assume that 
\begin{equation}\label{2'coreFree}
O_{2'}(C_G(Q))=1 \text{ and }O_{2'}(C_{G'}(Q))=1.  
\end{equation}
Note that $C_P(Q)=P_0$ for both cases {\tt (I)} and {\tt (II)}.
Now, since $Q$ is fully $\CF$-normalized in $P$

it follows from Lemma \ref{Lin2.5} that 
$P_0 \in{\mathrm{Syl}}_2(C_G(Q))\cap{\mathrm{Syl}}_2(C_{G'}(Q))$. 
Hence we have 
 by (\ref{2'coreFree}) and Theorem 1 of \cite{B} that $P_0\unlhd C_G(Q)$ and $P_0\unlhd C_{G'}(Q)$,
so that 
\begin{equation}\label{P0xP0Normal}
P_0 \times P_0 \unlhd C_\mathfrak G(\Delta Q)=:C.
\end{equation}

On the other hand, since $P_0$ is fully $\CF$-normalized in $P$,
we know by Lemma \ref{2abelian} that
\begin{equation}\label{ScottDeltaP_0}
\Res\,_{C_{\mathfrak G}(\Delta P_0)}^{N_{\mathfrak G}(\Delta P_0)}
\Big( {\mathrm{Sc}}(N_{\mathfrak G}(\Delta P_0), N_{\Delta P}(\Delta P_0)) \Big) 
\text{ is indecomposable.}
\end{equation}
Recall that 
\begin{equation}\label{BrauerConstruction}
\text{''taking Brauer construction'' and ''taking restriction'' commute.}
\end{equation}

Set $M_1:=M(\Delta Q), N:=N_\mathfrak G(\Delta Q)$ and $C:=C_\mathfrak G(\Delta Q)$. 
Since $C_P(Q)=P_0$, 
$$ C_{\Delta P}(\Delta Q)= \Delta P_0.$$
Hence 
by Lemma \ref{centralizer}(ii),
\begin{equation}\label{NtoC}
{\mathrm{Sc}}(C, \Delta P_0)\,\Big|\,{\mathrm{Res}}\,^N_C
\,\Big({\mathrm{Sc}}(N, N_{\Delta P}(\Delta Q))\Big).
\end{equation} 
On the other hand, by the definition of $M_1$ and the assumption (a),
$$\Res\,^N_C(M_1)=\Res\,^N_C(M(\Delta Q)) = 
\Res\,^N_C\Big({\mathrm{Sc}}(N, N_{\Delta P}(\Delta Q))\Big),
$$
so that 
\begin{equation}\label{M1}
\Res\,^N_C(M_1)\,\Big|\,\Res\,^N_C\circ\Ind_{N_{\Delta P}(\Delta Q)}^N\,(k).
\end{equation}

Hence,
$ {\mathrm{Sc}}(C, \Delta P_0)\,|\, \Res\,^N_C\,(M_1)$ and we can write that
\begin{equation}\label{XX}
\Res\,_C^N(M_1) =\Sc(C, \Delta P_0) \bigoplus X \text{\quad for a }kC\text{-module }X.
\end{equation}

{\sf Case \tt (I):}
{\sf We claim here that} 
\begin{equation}\label{N}
N=C{\cdot}N_{\Delta P}(\Delta Q).
\end{equation}
By the assumption {\tt (I)}, $|N/C|=|N_P(Q)/C_P(Q)|=|N_P(Q)/P_0|$, so that $|N/C|=1$ or $2$.
If $N=C$, then the claim is trivial. So we may assume that $|N/C|=2$.
Assume that $N \gneqq C{\cdot}N_{\Delta P}(\Delta Q)$. 
Apparently, $|N/C|\geq |N_{\Delta P}(\Delta Q)\backslash N/C|$, so that 
$|N_{\Delta P}(\Delta Q)\backslash N/C|=2$. Thus, $N_{\Delta P}(\Delta Q){\cdot}C=C$.
Hence, $\Delta N_P(Q) = N_{\Delta P}(\Delta Q) \leq C=C_G(Q)\times C_{G'}(Q)$, so that
$N_P(Q)\leq C_P(Q)$, which implies that $N_P(Q)=C_P(Q)$, a contradiction since
we are assuming that $2=|N/C|=|N_{\Delta P}(\Delta Q)/C_{\Delta P}(\Delta Q)|=|N_P(Q)/C_P(Q)|$.
So, {\sf the claim is proved.}

By (\ref{M1}), the Mackey formula and (\ref{N}), 
$$\Res\,_C^N(M_1) \ \Big| \ (\Res\,_C^N\circ\Ind_{N_{\Delta P}(\Delta Q)}^N)(k)
= \Ind_{C\cap N_{\Delta P}(\Delta Q)}^C(k)=\Ind_{C_{\Delta P}(\Delta Q)}^C(k).$$ 
Hence, by noting that $C_P(Q)=P_0$ by the assumption {\tt (I)}, we have
\begin{equation}\label{ResM1}
\Res\,_C^N(M_1) \ \Big| \ \Ind_{\Delta P_0}^C(k).
\end{equation}

Thus, $X \, | \, \Res\,^N_C(M_1) \,| \, \Ind_{\Delta P_0}^C(k)$ by (\ref{XX}) and (\ref{ResM1}), 
so that $X \ | \  \Ind_{\Delta P_0}^C(k)$.  So, let us restrict these $kC$-modules 
to $P_0\times P_0$. Namely,
\begin{equation}\label{ResToP0xP0}
\Res\,_{P_0 \times P_0}^C(X) \ {\Big |} \ ( \Res\,_{P_0 \times P_0}^C\circ\Ind_{\Delta P_0}^C) (k).
\end{equation}
By the Mackey formula  
$$(\Res\,_{P_0 \times P_0}^C\circ\Ind_{\Delta P_0}^C)(k)
=\bigoplus_{c} \Ind_{(P_0 \times P_0) \, \cap\, {}^c (\Delta P_0)}^{P_0 \times P_0}(k)$$
where $c$ runs through the double cosets of $[(P_0 \times P_0) \backslash C / \Delta P_0]$. 
Note by (\ref{P0xP0Normal}) that 
$(P_0 \times P_0) \cap {}^c (\Delta P_0)
={}^c(P_0 \times P_0) \cap {}^c (\Delta P_0)
= {}^c((P_0 \times P_0) \cap \Delta P_0)={}^c(\Delta P_0)$.
Hence we can rewrite 
$$(\Res\,_{P_0 \times P_0}^C\circ\Ind_{\Delta P_0}^C)(k)
=\bigoplus_{c} \Ind_{{}^c(\Delta P_0)}^{P_0\times P_0}(k)$$

{\sf Now, we claim next that $X=0$. So assume that $X\,{\not=}\,0$.}
Then we get by (\ref{ResToP0xP0}) that 
$$\Res\,_{P_0 \times P_0}^C(X) \,\Big|\, \bigoplus_{c} \Ind_{{}^c(\Delta P_0)}^{P_0\times P_0}(k).$$
Recall that each direct summand $\Ind\, _{{}^{c}(\Delta P_0)}^{P_0 \times P_0}(k)$ is indecomposable with
the vertex $^{c}(\Delta P_0)$ by Green's indecomposability theorem. Hence by the Krulll-Schmidt theorem,
there are elements $c_1, \cdots, c_m \in C$ for some $m\geq 1$ such that
\begin{equation}\label{ResX}
\Res\,_{P_0 \times P_0}^C(X) 
= \bigoplus_{i=1}^m \Ind\, _{{}^{c_i}(\Delta P_0)}^{P_0 \times P_0}(k).
\end{equation}
Since we are assuming that $X\,{\not=}\,0$ and hence $m\geq 1$, set $c:=c_1$.
As noted above, $\Ind\, _{{}^{c}(\Delta P_0)}^{P_0 \times P_0}(k)$
is an indecomposable $k(P_0\times P_0)$-module that has the vertex 
${{}^{c}}(\Delta P_0)$ and has a trivial source. Hence, \cite[(27.7) Corollary]{Th} implies that
$\Big( \Ind\, _{{}^{c}(\Delta P_0)}^{P_0 \times P_0}(k)\Big)({{}^{c}}(\Delta P_0))\,{\not=}\,0$,
so that $X ({{}^{c}}(\Delta P_0))\,{\not=}\,0$. Now since $X$ is a $kC$-module and $c\in C$,
we have that
$ 0\,{\not=}\,X ({{}^{c}}(\Delta P_0)) = {{}^{c}}X({{}^{c}}(\Delta P_0))={{}^{c}}X( \Delta P_0)$,
so that 
\begin{equation}\label{DeltaP0}
X(\Delta P_0)\,{\not=}\,0.
\end{equation}

Next, let us apply the Brauer construction $-(\Delta P_0)$ to (\ref{XX}). Then
\begin{equation*}
(\Res\,_C^N(M_1) )(\Delta P_0)=(\Sc(C, \Delta P_0))(\Delta P_0) \bigoplus X(\Delta P_0).
\end{equation*} 
Since $\Delta P_0$ acts trivially on $\Sc(C, \Delta P_0)$,
$ (\Sc(C, \Delta P_0))(\Delta P_0)=\Sc(C, \Delta P_0)$. Hence we can rewrite the above as
\begin{equation}\label{BrauerConst1}
(\Res\,_C^N(M_1) )(\Delta P_0)=\Sc(C, \Delta P_0)\bigoplus X(\Delta P_0).
\end{equation} 
By (\ref{DeltaP0}), 
\begin{equation}\label{RHS}
\text{the right hand side of (\ref{BrauerConst1}) is not indecomposable.}
\end{equation}
Now, let us look at the left hand side of (\ref{BrauerConst1}).
But before it, let us remark that since $\Delta Q\unlhd\Delta P_0$, it follows from Lemma \ref{Kunugi} 
that
$$(M(\Delta Q))(\Delta P_0) \cong 
     \Res\,^{N_{\mathfrak G}(\Delta P_0)}_{N_{\mathfrak G}(\Delta P_0)\cap N_{\mathfrak G}(\Delta Q)}
     \Big( M(\Delta P_0) \Big).
$$ 
Recall the definition of $N:=N_{\mathfrak G}(\Delta Q)$, so that the above means that
\begin{equation}\label{BrouePuig}
(M(\Delta Q))(\Delta P_0) \cong \Res\,^{N_{\mathfrak G}(\Delta P_0)}_{N_N(\Delta P_0)}
\Big( M(\Delta P_0) \Big).
\end{equation}

\begin{align*}
(\Res\,_C^N(M_1) )(\Delta P_0) 
&\cong \Res\,^{N_N(\Delta P_0)}_{C_N(\Delta P_0)}\,( M_1(\Delta P_0))
\text{ by (\ref{BrauerConstruction})}
\\
&= \Res\,^{N_N(\Delta P_0)}_{C_N(\Delta P_0)}\,\Big( (M(\Delta Q))\, (\Delta P_0) \Big)
\\
&\text{ by the definition of }M_1
\\
&\cong \Res\,^{N_N(\Delta P_0)}_{C_N(\Delta P_0)}\,
              \Big(\Res\,^{N_{\mathfrak G}(\Delta P_0)}_{N_N(\Delta P_0)}\Big( M(\Delta P_0) \Big) \Big)
\  \text{ by (\ref{BrouePuig})}
\\
&= \Res\,^{N_{\mathfrak G}(\Delta P_0)}_{C_N(\Delta P_0)}\,\Big( M(\Delta P_0)\Big)
\\
&\cong 
\Res\,^{N_{\mathfrak G}(\Delta P_0)}_{C_N(\Delta P_0)}\,
\Big( {\mathrm{Sc}} (N_{\mathfrak G}(\Delta P_0), N_{\Delta P}(\Delta P_0)\Big)
\\
\text{ since }
M(\Delta P_0)&={\mathrm{Sc}} (N_{\mathfrak G}(\Delta P_0), N_{\Delta P}(\Delta P_0))
\text{ by the assumption (b)}.
\end{align*}
Now note that
$C_N(\Delta P_0)=C_{\mathfrak G}(\Delta P_0)\cap N 
=C_{\mathfrak G}(\Delta P_0)\cap N_{\mathfrak G}(\Delta Q)
=C_{\mathfrak G}(\Delta P_0)$
since $C_{\mathfrak G}(\Delta P_0)\leq C_{\mathfrak G}(\Delta Q) \leq N_{\mathfrak G}(\Delta Q)$
(recall that $P_0\geq Q) )$. Namely,
$$ C_N(\Delta P_0)=C_{\mathfrak G}(\Delta P_0).$$
So from the above
\begin{equation}\label{finalStroke}
(\Res\,_C^N(M_1) )(\Delta P_0)  \cong 
\Res\,^{N_{\mathfrak G}(\Delta P_0)}_{C_{\mathfrak G}(\Delta P_0)}\,
\Big( {\mathrm{Sc}} (N_{\mathfrak G}(\Delta P_0), N_{\Delta P}(\Delta P_0)\Big).
\end{equation}

Apparently by (\ref{finalStroke}) and (\ref{ScottDeltaP_0}), $(\Res\,_C^N(M_1) )(\Delta P_0)$
is indecomposable. But this is a contradiction by (\ref{RHS}).
{\sf Hence we have proved the claim that $X=0$.}

Therefore by (\ref{XX}), $\Res\,_C^N(M_1) =\Sc(C, \Delta P_0)$, so that by the definition of $M_1$,
we finally have that 
$$\Res\,_C^N(M(\Delta Q)) =\Sc(C, \Delta P_0).$$
Since $\Delta Q$ acts trivially on $M(\Delta Q)$, this is the equivalent to say that
$$\Res\,_{{\Delta Q}\cdot C}^N(M(\Delta Q)) =\Sc(C, \Delta P_0).$$
This completes the proof of {\sf the case \tt (I)}.

{\sf Case \tt (II):}
First, note that for any $x\in N$ since $C\unlhd N$ and $C_P(Q)=P_0$ by {\tt (II)},
$$ C\cap{^x\!(\Delta P)} = {^x\!C}\cap{^x\!(}\Delta P)={^x\!(}C\cap\Delta P)
={^x\!(}C_{\mathfrak G}(\Delta Q)\cap\Delta P) = {^x\!(}\Delta P_0).$$
By (\ref{XX}) and (\ref{M1}), since $N_P(Q)=P$, 
$$X\,\Big|\, \Res\,^N_C(M_1)\,\Big|\,\Res\,^N_C\circ\Ind_{\Delta P}^N\,(k).
$$
Hence by (\ref{XX}), the Mackey decomposition and the above, we have
$$
X\,\Big|\,\bigoplus_x\,\Ind\,_{{^x\!(}\Delta P_0)}^C (k)
$$ 
where $x$ runs through all representatives of the double cosets  
$[C\backslash N/\Delta P]$.

{\sf We claim here also that $X=0$, so assume that $X\,{\not=}\,0$.}
Then, there is an indecomposable $kC$-module $Y$ such that
$Y\,|\,X$, so that by the above and Krull-Remak's theorem, 
$$
Y\,\Big|\,\Ind\,_{{^x\!(}\Delta P_0)}^C (k)
\text{\qquad for some }x\in N.
$$ 
Then we restrict these to $P_0\times P_0$, that is,
\begin{equation}\label{ResY}
\Res\,_{P_0 \times P_0}^C(Y) \,\Big| \, 
( \Res\,_{P_0 \times P_0}^C\circ\,\Ind\,_{^x\!(\Delta P_0)}^C)(k).
\end{equation}
Then, by the Mackey formula, we have that   
\begin{equation}\label{C->P0xP0}
(\Res\,_{P_0 \times P_0}^C\circ\Ind\,_{^x\!(\Delta P_0)}^C)(k)
=\bigoplus_{c} \Ind\,_{(P_0 \times P_0) \, \cap\, {}^c (^x\!(\Delta P_0)) }^{P_0 \times P_0}(k)
\end{equation}
where $c$ runs through representatives of the double cosets of 
$[(P_0 \times P_0)\, \backslash \,C\, /\, {^x\!(}\Delta P_0)]$. 
Now, note that $P_0$ is an $\CF$-essential subgroup of $P$
from \cite[Theorem 5.3]{CG} since 
\linebreak
$N_\mathfrak G(\Delta Q)/C_\mathfrak G(\Delta Q)\cong S_3$ by
the assumption in {\tt (II)}.
So that Lemma \ref{essential} implies that 
$$N:=N_{\mathfrak G}(\Delta Q)\leq N_{\mathfrak G}(\Delta P_0){\cdot}C_{\mathfrak G}(\Delta Q)
 = N_{\mathfrak G}(\Delta P_0)\cdot C.
$$
Hence we can write $x = c' n'$ for some $c'\in C$ and $n'\in N_{\mathfrak G}(\Delta P_0)$.
Thus, for any $c\in C$,
$$
(P_0 \times P_0) \, \cap\, {}^{cx}\!(\Delta P_0) =(P_0 \times P_0) \, \cap\, {}^{cc'n'}\!(\Delta P_0) 
=(P_0 \times P_0) \, \cap\, {}^{cc'}\!(\Delta P_0).
$$
Thus, from (\ref{ResY}) and (\ref{C->P0xP0}),
$$
\Res\,_{P_0 \times P_0}^C(Y) \,\Big| \, 
\bigoplus_{y\in\mathcal Y} \Ind\,_{(P_0 \times P_0) \, \cap\, {}^y\!(\Delta P_0) }^{P_0 \times P_0}(k)
$$
where $\mathcal Y$ is a some non-empty subset of $C$.
Since for each $y$, $\Ind\,_{(P_0 \times P_0) \, \cap\, {}^y\!(\Delta P_0) }^{P_0 \times P_0}(k)$
is indecomposable by Green's theorem, we actually get by the Krull-Schmidt theorem that 
$$
\Res\,_{P_0 \times P_0}^C(Y) \,=\,
\bigoplus_{y\in\mathcal Y'} \Ind\,_{(P_0 \times P_0) \, \cap\, {}^y\!(\Delta P_0) }^{P_0 \times P_0}(k)
$$
where $\mathcal Y'$ is a some non-empty subset of $C$.
This is the same as in (\ref{ResX}) in the proof of {\tt (I)}.
Therefore, just doing the same things after (\ref{ResX}), we do have also that
$Y=0$, {\sf a contradiction.} Hence we are done also the proof of {\sf Case\tt (II)}.
\end{proof}

\section{The proof of the main theorem}

\noindent
Now we are ready to prove our main result.

\begin{proof}[{\bf Proof of Theorem \ref{product}}]
Set $\mathfrak G:=G \times G'$ and  $M:={\mathrm{Sc}}(\mathfrak G, \Delta P)$.
Since $P\in{\mathrm{Syl}}_2(G)$,  ${\mathcal F}:={\mathcal F}_P(G)$ is a saturated fusion system
(see Proposition 1.3 of \cite{BLO}). Furthermore, ${\mathcal F}_{\Delta P}(\mathfrak G) $ is 
also saturated because  ${\mathcal F}_{\Delta P}(\mathfrak G) \cong {\mathcal F}_P(G)$
since $\CF_P(G)=\CF_P(G')$.

By \cite[Lines 12 $\sim$ 18 of the proof of
Theorem 1.3 on p.445]{IK}, it 
is enough to prove our claim for fully ${\mathcal F}_{\Delta P}(\mathfrak G) $--normalized subgroups of $\Delta P$.
Let $\Delta Q \leq \Delta P$ be any 
fully ${\mathcal F}_{\Delta P}(\mathfrak G) $-normalized 
subgroup of $\Delta P$. We shall prove that 
$\Res\,^{N_{\mathfrak G}(\Delta Q)}_{\Delta Q\,C_{\mathfrak G}(\Delta Q)} (M(\Delta Q))$ is 
indecomposable for any fully $\CF$-normalized subgroup of $P$
by using induction on $|P:Q|$.

If $|P:Q|=1$, the assertion holds by \cite[Lemma 4.3(b)]{KKM}.

Now, assume that $Q \lneqq P$ and that $M(\Delta R)$ is indecomposable 
as a $k(\Delta R \cdot C_{\mathfrak G}(\Delta R))$-module for all fully $\CF$-normalized subgroups $R$ with 
$|P:R|<|P:Q|$.

{\sf We first claim that} $M(\Delta Q)$ is indecomposable as a $k\,N_{\mathfrak G}(\Delta Q)$-module. 
Suppose that 
$$M(\Delta Q)=M_1 \bigoplus \ldots \bigoplus M_m$$ 
for some $m\geq 1$ such that each $M_i$ is an indecomposable 
$k\,N_{\mathfrak G}(\Delta Q)$-module. 
By Lemma 3.1 of \cite{IK} and Theorem 1.7 of \cite{K}, we have
that ${\mathrm{Sc}}(N_{\mathfrak G}(\Delta Q), N_{\Delta P}(\Delta Q))\,\Big|\,M(\Delta Q)$. So we can set 
$$M_1:= {\mathrm{Sc}}(N_{\mathfrak G}(\Delta Q), N_{\Delta P}(\Delta Q)).$$
Further,
$M(\Delta Q) \ | \ {\mathrm{Res}}\,_{N_{\mathfrak G}(\Delta Q)}^{\mathfrak G} (M) $
(see \cite[Lemma 2.1(ii)]{IK}).

{\sf Now assume that $m\geq 2$.}
Hence we have that
$ M_2\ | \ {\mathrm{Res}}\,_{N_{\mathfrak G}(\Delta Q)}^{\mathfrak G} (M)$. 
By Burry-Carlson-Puig's theorem (see \cite[Theorem 4.4.6]{NT}),
a vertex of $M_2$ is not equal to $\Delta Q$ since $Q \lneqq P$. 
Since $M_2 \, | \, M(\Delta Q)$, \cite[Lemma 2.1(i)]{IK} implies that 
a vertex of $M_2$ contains $\Delta Q$. As a result, a vertex of $M_2$, 
say $\Delta V$, satisfies $\Delta V \gneqq \Delta Q$. Moreover,
since $M \ {|} \ \mathrm{Ind}_{\Delta P}^{\mathfrak G}(k)$, 
we have by the Mackey decomposition
$$
M_2 \ {\Big |} \ \bigoplus_h {\mathrm{Ind}}
_{ N_{\mathfrak G}(\Delta Q) \cap\,^h\!(\Delta P)}^ {N_{\mathfrak G}(\Delta Q)} (k)
$$
where $h$ runs over representatives
of the double cosets in $[N_{\mathfrak G}(\Delta Q) \backslash {\mathfrak G} / \Delta P]$
which satisfies $\Delta Q \leq \ ^h\!(\Delta P)$ by 1.4 of \cite{Bro}. 
Hence $\Delta V$
lies in $N_{^h\!(\Delta P)}(\Delta Q)=N_{\mathfrak G}(\Delta Q)\, \cap{^h\!(\Delta P)}$ for some $h\in\mathfrak G$.
Note that for such $h$, we have that
$$
 \Delta V \leq N_{^h\!(\Delta P)}(\Delta Q) 
\leq_{N_{\mathfrak G}(\Delta Q)}  N_{\Delta P}(\Delta Q)$$
where the second inequality follows from Lemma 3.2 of \cite{IK} (since $\Delta Q$ is fully $\CF$-normalized implies 
that $\Delta Q$ is fully automized and receptive). 
Hence $\Delta V \leq N_{\mathfrak G}(\Delta Q)$, which implies that 
$\Delta Q$ is a proper normal subgroup of $\Delta V$.
Then, since $N_{\Delta P}(\Delta Q)$ is a vertex of $M_1$ by the definition of $M_1$,
we have by \cite[(27.7)Corollary]{Th} that
$$M_1(\Delta V) \neq 0.$$
Furthermore, by \cite[(27.7)Corollary]{Th} we also get that 
$$M_2(\Delta V) \neq 0$$ 
since $\Delta V$ is a vertex of $M_2$.
Further by Lemma \ref{Kunugi}, 
$$ M_1(\Delta V) \bigoplus M_2(\Delta V) \ {\Big |} \  (M(\Delta Q))(\Delta V) 
\cong 
\ \Res\, ^{ N_{\mathfrak G}(\Delta V)}_{  N_{\mathfrak G}(\Delta V)\cap N_{\mathfrak G}(\Delta Q) } M(\Delta V).
$$
Since $V \geq Q$,  
$C_{\mathfrak G}(\Delta V)\leq C_{\mathfrak G}(\Delta Q)\leq N_{\mathfrak G}(\Delta Q)$,
and hence
$C_{\mathfrak G}(\Delta V) \leq N_{\mathfrak G}(\Delta V) \cap N_{\mathfrak G}(\Delta Q)$.
So the isomorphism above restricts to as  $k\,C_{\mathfrak G}(\Delta V) $-modules. 
This means that $M(\Delta V)$ has at least two non-zero direct summands as a $k\,C_{\mathfrak G}(\Delta V)$-module,
{\sf which contradicts} our 
induction hypothesis (note that since $Q \lneqq V$, we have that $|P:V| < |P:Q|$
and that $\Delta V$ can be changed to an $\CF$-conjugate of itself and can be made fully 
$\CF$-normalized in $\Delta P$).

Therefore, $m=1$ and $M(\Delta Q)$ is indecomposable
as a $k\,N_{\mathfrak G}(\Delta Q)$-module or equivalently 
\begin{equation}\label{M(DeltaQ)}
M(\Delta Q)\cong \Sc(N_{\mathfrak G}(\Delta Q), N_{\Delta P}(\Delta Q))
\text{ for all fully $\CF$-normalized subgroups }Q\leq P.
\end{equation}
Apparently, (\ref{M(DeltaQ)}) holds for the case $Q=P_0$.

Now, {\sf our final aim} is to prove that
\begin{equation}\label{aim}
\Res\,^{N_{\mathfrak G}(\Delta Q)}_{\Delta Q\,C_{\mathfrak G}(\Delta Q)} (M(\Delta Q))
\text{ is indecomposable}
\end{equation}
for any fully $\CF$-normalized
subgroup $Q$ with $1\,{\not=}\,Q\lneqq P$.

\item[$\bullet$] Assume first that $Q\lneqq P$ is non-abelian.

\noindent
Then, both $C_G(Q)$ and $C_{G'}(Q)$ 
are $2$-nilpotent  by Lemma \ref{C_G(Q)2-nilp}(\ref{nonabelian}).
So $C_{\mathfrak G}(\Delta Q)$ is $2$-nilpotent. Hence,
(\ref{aim}) holds
by Lemma \ref{2-nilpotent} and (\ref{M(DeltaQ)}).

\noindent
\item[$\bullet$] Next we look at the case that $Q$ is cyclic.

{\sf Case C1:} $Q \leq Z(P)$.
By (\ref{M(DeltaQ)}) and \cite[Lemma 4.4]{KKM}, we have (\ref{aim}). 

{\sf Case C2:} $Q\leq P_0$ and $Q\,{\not\leq}\,Z(P)${\sf .}
Obviously, $Q\leq P_0=C_P(Q)$.
Further note that since ${\mathrm{Aut}}(\Delta Q)$ is a $2$-group, 
Lemma \ref{Lin2.5} yields that
$$
N_{\Delta P}(\Delta Q)/C_{\Delta P}(\Delta Q)
\cong N_{\mathfrak G}(\Delta Q)/C_{\mathfrak G}(\Delta Q).
$$
Thus,
(\ref{aim}) follows from (\ref{M(DeltaQ)}) and
Lemma \ref{homocyclic}{\tt (I)}.

{\sf Case C3: $Q\,{\not\leq}\, P_0$.}
By Lemma \ref{C_G(Q)2-nilp}(\ref{cyclic}), 
both $C_G(Q)$ and $C_{G'}(Q)$ are $2$-nilpotent, so that so is 
$C_{\mathfrak G}(\Delta Q)$. Hence, Lemma \ref{2-nilpotent} implies (\ref{aim}).

\item[$\bullet$]
Thus, from now on {\sf we assume that $Q$ is non-cyclic abelian.}
Note that then $2$-rank of $Q$ is precisely $2$.

\item[$\bullet$] Next we consider the case that {\sf $Q$ is non-homocyclic}
(so that our situation says that  $Q\cong C_{2^m}\times C_{2^{m'}}$ for $1\leq m <m'\leq n$).
Since we are assuming that $Q$ is abelian, Lemma \ref{list} implies that
$N_{\mathfrak G}(\Delta Q)/C_{\mathfrak G}(\Delta Q)$ is a $2$-group.
Further by Lemma \ref{Lin2.5},
$$ N_{\Delta P}(\Delta Q)/C_{\Delta P}(\Delta Q)\in
{\mathrm{Syl}}_2(N_{\mathfrak G}(\Delta Q)/C_{\mathfrak G}(\Delta Q)).$$
Then, since ${\mathrm{Aut}}(Q)$ is a $2$-group (see \cite[Lemma 1]{Sam12}),
we get even that 
\begin{equation}\label{nonHomocyclic}
N_{\Delta P}(\Delta Q)/C_{\Delta P}(\Delta Q)=N_{\mathfrak G}(\Delta Q)/C_{\mathfrak G}(\Delta Q).
\end{equation}

{\sf Case NH1:} Assume first that $Q\,{\not\leq}\,P_0$. Then it follows from
Lemma \ref{C_G(Q)2-nilp}(\ref{non-homocyclic}) that $C_G(Q)$ and $C_{G'}$ are both
$2$-nilpotent, and hence so is $C_{\mathfrak G}(\Delta Q)$. Hence
Lemma \ref{2-nilpotent} yields (\ref{aim}).

{\sf Case NH2:} Assume next that $Q\leq P_0$.
Since $Q$ is not cyclic, $Q\,{\not\leq}\,Z(P)$ and hence $C_P(Q) \lneqq P$. 
Obviously, $P_0\leq C_P(Q)$, so that 
$$C_P(Q)=P_0.$$
Then for $N_P(Q)$ there are two possibilities, namely $N_P(Q)=P_0$ or $P$.

\indent\indent
{\sf Subcase NH2(a):} Suppose that $N_P(Q)=P_0$.
Then $N_P(Q)=C_P(Q)$, so that 
$N_{\Delta P}(\Delta Q)=C_{\Delta P}(\Delta Q)$,
which implies from (\ref{nonHomocyclic}) that $N_{\mathfrak G}(\Delta Q)=C_{\mathfrak G}(\Delta Q)$.
Hence (\ref{M(DeltaQ)}) automatically yields (\ref{aim}).

\indent\indent
{\sf Subcase NH2(b):} Suppose that $N_P(Q)=P$.
Then, it follows from (\ref{M(DeltaQ)}), (\ref{nonHomocyclic}) and Lemma \ref{homocyclic}{\tt (I)}
that (\ref{aim}) holds.

\item[$\bullet$] Finally we consider the case that {\sf $Q$ is homocyclic}
(so that our situation says that  $Q\cong C_{2^m}\times C_{2^{m}}$ for $1\leq m\leq n$).
In this case, first of all,  from Lemma \ref{Lin2.5}
\begin{equation}\label{HomocyclicSylow2}
N_{\Delta P}(\Delta Q)/C_{\Delta P}(\Delta Q)
\in{\mathrm{Syl}}_2(N_{\mathfrak G}(\Delta Q)/C_{\mathfrak G}(\Delta Q)).
\end{equation}

{\sf Case H1:} $Q\,{\not\leq}\,P_0$. 
Then { by Lemma \ref{HomocyclicSubgroup}(iii)}
$$Q=\langle (ab)^{2^{n-1}} \rangle \times \langle t \rangle \cong C_2 \times C_2.$$
Then by Lemma \ref{C_G(Q)2-nilp}(\ref{hcyclic}),
both $C_G(Q)$ and $C_{G'}(Q)$ are $2$-nilpotent, and hence so is $C_{\mathfrak G}(\Delta Q)$. Therefore, 
from Lemma \ref{2-nilpotent}, (\ref{aim}) holds.

{\sf Case H2:} $Q \leq P_0$. 
Then {from Lemma \ref{HomocyclicSubgroup}(i)--(ii)},  
$$Q=\langle a^{2^m} \rangle \times \langle b^{2^m} \rangle
\text{ for some }m \text{ with }0 \leq m \leq n-1.$$  
Then, $C_P(Q)=P_0$ and $N_P(Q)=P$, so $N_P(Q)/C_P(Q) \cong C_2$. 
Hence by (\ref{HomocyclicSylow2}),
\begin{equation}\label{2or6}
C_2 \cong N_{\Delta P}(\Delta Q)/C_{\Delta P}(\Delta Q)
\in{\mathrm{Syl}}_2(N_{\mathfrak G}(\Delta Q)/ C_{\mathfrak G}(\Delta Q)).
\end{equation}
Recall that $|{\mathrm{Aut}}(\Delta Q)|=2^\ell{\cdot}3$ for some integer $\ell\geq 0$
(see \cite[Line 1 p.5957]{CG}). Thus
$$ |N_{\mathfrak G}(\Delta Q)/ C_{\mathfrak G}(\Delta Q)|=2 \text{ or }6. $$
Namely, there are two possibilities.

\indent\indent
{\sf Subcase H2(a)}: Assume $|N_{\mathfrak G}(\Delta Q)/ C_{\mathfrak G}(\Delta Q)|=2$.
Then, from (\ref{2or6}),
$$N_{\mathfrak G}(\Delta Q)/ C_{\mathfrak G}(\Delta Q)=N_{\Delta P}(\Delta Q)/C_{\Delta P}(\Delta Q)
  \cong C_2.$$
Since $C_P(Q)=P_0\geq Q$, we know from 
(\ref{M(DeltaQ)}) and Lemma \ref{homocyclic}{\tt (I)} that (\ref{aim}) holds. 

\indent\indent
{\sf Subcase H2(b)}: Assume next that  $|N_{\mathfrak G}(\Delta Q)/ C_{\mathfrak G}(\Delta Q)|=6$.
Then, from (\ref{2or6}),
$$C_2\cong N_{\Delta P}(\Delta Q)/ C_{\Delta P}(\Delta Q)
\lneqq N_{\mathfrak G}(\Delta Q)/C_{\mathfrak G}(\Delta Q)\cong S_3.$$
Hence it follows from (\ref{M(DeltaQ)}) and  Lemma \ref{homocyclic}{\tt (II)} that
(\ref{aim}) holds.

\end{proof}

\end{document}